\newcommand{\R}{\mathbb{R}}
\newcommand{\C}{\mathbb{C}}
\newcommand{\N}{\mathbb{N}}
\newcommand{\x}{\xi}
\newcommand{\fia}{\mathbbm{1}_{|\Phi-\alpha|<M}}
\newcommand{\fiM}{\mathbbm{1}_{|\Phi|<M}}
\newcommand{\jap}[1]{\left\langle #1 \right\rangle}
\numberwithin{equation}{section}
\newcommand{\be}{\begin{equation}}
\newcommand{\ee}{\end{equation}}
\theoremstyle{plain}
\newtheorem{thm}{Theorem}
\newtheorem*{thm*}{Theorem}
\newtheorem{prop}{Proposition}
\newtheorem{conj}{Conjecture}
\newtheorem{lem}{Lemma}
\theoremstyle{definition}
\theoremstyle{remark}
\newtheorem{nb}{Remark}
\def\blfootnote{\xdef\@thefnmark{}\@footnotetext}
\title{Sharp local well-posedness and nonlinear smoothing for dispersive equations through frequency-restricted estimates}
\date{}
\author{Simão Correia, Filipe Oliveira and Jorge Drumond Silva}
\begin{document}
\maketitle

\begin{abstract}
	We consider the problem of establishing nonlinear smoothing as a general feature of nonlinear dispersive equations, i.e. the improved regularity of the integral term in Duhamel's formula, with respect to the initial data and the corresponding regularity of the linear evolution, and how this property relates to local well-posedness. In a first step, we show how the problem generally reduces to the derivation of specific frequency-restricted estimates, which are multiplier estimates in the spatial frequency alone. Then, using a precise methodology, we prove these estimates for the specific cases of the modified Zakharov-Kuznetsov equation, the cubic and quintic nonlinear Schrödinger equation and the quartic Korteweg-de Vries equation.

	\vskip10pt
	\noindent\textbf{Keywords}: dispersive nonlinear equations; local well-posedness; nonlinear smoothing.
	\vskip10pt
	\noindent\textbf{AMS Subject Classification 2010}: 35Q53, 35Q55, 35A01, 35B65, 42B37.
\end{abstract}

\section{Introduction}

\subsection{Description of the problem and motivation}

Nonlinear smoothing denotes the phenomenon, typically observed in many nonlinear dispersive  equations, by which the
integral term in Duhamel's formula exhibits increased regularity when compared to the initial data of the Cauchy problem. This shows
that it is the linear term which is responsible for the persistence of the lower regularity of the flow, given by that initial data.

This property is closely related to \textit{dispersive blow-up}, where singularities occur in the flow of
nonlinear dispersive equations as a consequence solely of the linear component of their evolution, while the nonlinear contribution, given by the integral term in Duhamel's formula, remains controlled and comparatively well-behaved.

Dispersive blow-up was first conjectured to occur in \cite{BBM}, for the KdV equation, using a heuristic argument that raised the possibility of existence of a focusing effect of the linear evolution, as a consequence of specifically constructed initial data, due to the dispersion relation of the equation and its unbounded phase velocity. But a proof of such a result, in the context of  the generalized KdV equations, was only established by Bona and Saut \cite{BS1} about twenty years later. In this case, the proof of dispersive blow-up relies on a particular type of improved regularity of the nonlinear part of the evolution in which it remains bounded, while the linear part blows up.
This point of view, centered on establishing singularity formation exclusively from the linear component of the evolution of solutions to nonlinear dispersive equations, but where determining some form of nonlinear smoothing effect is required as a complementary feature towards the proof of the main result, has been the essence of
several published papers in the last few years (see e.g. \cite{BPSS,BS2,FLPS}). 

Very soon after \cite{BS1}, Linares and Scialom  in \cite{linaresscialom}  considered specifically the issue
of increase in Sobolev regularity of the nonlinear term in Duhamel's formula, proving, for the modified KdV equation, that this term is in $H^{s+1}(\R)$ for Cauchy data in $H^s(\R)$, for $s\geq 3/4$. Since then, this point of view 
of  associating the nonlinear smoothing phenomenon precisely  with the increase of regularity of the nonlinear component of the evolution, measured in  terms of Sobolev spatial derivatives, while  not necessarily trying to establish the occurrence of dispersive singularities, has become predominant.
Results along these lines were proved by Bourgain in \cite{bourgain} for the cubic, defocusing, nonlinear Schr\"odinger equation in $\R^2$ and by Keraani and Vargas (\cite{keraani}) for the general $L^2$-critical nonlinear Schr\"odinger equation in $\R^n$. M. Erdo\u{g}an, N. Tzirakis and their
collaborators have extensively studied this property for several classes of equations and settings, in one spatial dimension (see \cite{tzirakis7, tzirakis1, tzirakis2} and references therein) by using a combination of normal form reduction together with Bourgain's Fourier restriction norm method ($X^{s,b}$ spaces). In the periodic and almost-periodic settings, this approach has also been sucessfully applied in \cite{IMOS} and \cite{CKV}.

In a previous paper \cite{CS}, two of the current authors used the method of the infinite iteration of normal form reductions (INFR), developed in \cite{ko} and \cite{koy} to study unconditional well-posedness, to obtain nonlinear smoothing properties for the KdV, modified KdV, cubic and derivative nonlinear  Schr\"odinger equations in $\R$, as well as for the modified Zakharov-Kuznetsov equation in $\R^2$. 

Although in that article the goal was to use the INFR as a very general method for obtaining nonlinear smoothing for some  dispersive partial differential equations, we now intend to further extend the idea that the nonlinear smoothing effect is actually an intrinsic and general
property of  nonlinear dispersive equations. With that purpose, we will use here multilinear estimates in Bourgain (or Fourier restriction norm) spaces to estimate and prove the gain of regularity of the nonlinear terms of the equations. This highlights the strong relationship with the local well-posedness regularity, for which the same methods have been used since the 1990's for optimal results by Picard iteration, as for example in \cite{hadac, KPV6, kino}.

To briefly summarize the basic ideas, let us consider  a generic dispersive PDE 
\begin{equation}\label{eq:geral}
	u_t - i L(D)u = N(u), \quad u\big|_{t=0}=u_0\in H^s(\R^d),
\end{equation}
where $L(D)$, $D=\partial_x/i$, is a linear differential (or pseudo-differential) operator in the spatial variables  given by a real Fourier symbol $L(\xi)$, and $N(u)$ is the nonlinear term, typically a combination of derivatives and powers of $u$ and its conjugate $\bar{u}$. If $(G(t))_{t\in\R}$ denotes the associated linear group, given by 
$G(t)u_0=e^{itL(D)}u_0=\mathcal{F}^{-1}\left( e^{itL(\xi)}\widehat{u_0}(\xi)\right) $, then the solution
to this initial value problem is given in integral form  by the Duhamel formula:
\begin{equation}\label{fullduhamel}
	u(t)=G(t)u_0 +\int_0^t G(t-t')N(u)(t')dt'.
\end{equation}

Local well-posedness by a Picard iteration scheme requires the use of an appropriate norm in order to obtain the contraction that yields the fixed
point of \eqref{fullduhamel}. By using Bourgain's $X^{s,b}$ spaces adapted to the linear evolution, with norm given by
\begin{equation}\label{bourgain:norm}
\|u\|_{X^{s,b}}=\|u\|_{X^{s,b}_{\tau=L(\xi)}}:=\|\langle\xi \rangle ^s \langle \tau -L(\xi) \rangle ^b \mathcal{F}_{t,x}(u)(\tau,\xi)\|_{L^2_{\tau,\xi}},
\end{equation}
where $\langle \cdot \rangle = (1+|\cdot|^2)^{1/2}$ and ${F}_{t,x}$ denotes the Fourier transform in both time and space variables, the essential point of the proof 
boils down to establishing a multilinear estimate  of the type
\begin{equation}\label{bourgain:mult}
	\|N(u)\|_{X^{s,b-1}} \leq C \|u\|^k_{X^{s,b}},
\end{equation}
for $b>1/2$, where $k$ is the power of the nonlinearity. The reason for the loss of one time derivative $b-1$ on the left-hand side of the estimate is the result of moving the norm from outside to inside the time integral, which then requires the multilinear estimate to pull the time regularity back up to $b$ on the right hand side. The regularity in time, $b$, must be larger than $1/2$ so that 
$X^{s,b} \subset C_t H^s_x$ in order for the solution obtained, i.e. the fixed point of the Picard iteration in the space
$X^{s,b}$, to actually be a time continuous flow in spatial $H^s$.


We are interested in the nonlinear smoothing property of \eqref{eq:geral}, which can be stated as follows: 
given initial data $u_0\in H^s(\R^d)$, the difference between the full nonlinear evolution
of the initial value problem \eqref{eq:geral}, represented by \eqref{fullduhamel}, and the corresponding linear evolution, for the same initial data, satisfies
$$
\|u(t)-G(t)u_0\|_{H^{s+\epsilon}}=\left\|\int_0^t G(t-t')N(u)(t')dt'\right\|_{H^{s+\epsilon}}\le C(t,\|u_0\|_{H^s}).
$$

As above, in terms of Bourgain spaces, this follows from a multilinear estimate analogous to \eqref{bourgain:mult}, where the smoothing
effect is now reflected as
\begin{equation}\label{bourgain:smooth}
	\|N(u)\|_{X^{s+\epsilon,b-1}} \leq C \|u\|^k_{X^{s,b}},
\end{equation}
again for some $b>1/2$: the injection $X^{s+\epsilon,b} \subset C_t H^{s+\epsilon}_x$ implies that the $H^{s+\epsilon}$ norm of the Duhamel
integral is bounded by its $X^{s+\epsilon,b}$ norm which, in turn, is controlled by $\|N(u)\|_{X^{s+\epsilon,b-1}}$ from the standard
linear estimates with Fourier restriction norms.

%
Observe that the multilinear estimate \eqref{bourgain:smooth}, with improved regularity, is actually a stronger estimate than \eqref{bourgain:mult} required for the proof of local well-posedness, as one can obtain the latter simply from the case $\epsilon=0$. The connection between these two multilinear estimates, as well as their methods of proof, exposes the intimate relationship between the local well-posedness properties of nonlinear dispersive equations, in particular the minimum Sobolev regularity for it to hold (at least through a Picard iteration scheme) and nonlinear smoothing properties.

In this work, we will show that the derivation of multilinear estimates in Bourgain spaces (in $\tau,\xi$ variables), like \eqref{bourgain:mult} and \eqref{bourgain:smooth}, can further be reduced to frequency-restricted estimates, which are multiplier estimates in the spatial frequency $\xi$ only.
This approach to proving multilinear estimates  through frequency-restricted estimates, yielding new nonlinear smoothing results for dispersive equations and from which one can recover their known local well-posedness results in the literature, is the central subject of this paper. 
%

\subsection{Statement of the main results}

Following the general method described above, we will prove results for five specific cases of dispersive partial differential equations.

We start with the Zakharov-Kuznetsov family of equations
\begin{equation}\label{ZK}\tag{gZK}
\partial_t u + \partial_x \Delta_{x,{\bf y}} u = \partial_x u^k,\qquad u(0,x,{\bf y})=u_0(x,{\bf y}) \in H^s(\R^d),
\end{equation}
for $k \in \N$, $(x, {\bf y}) \in \R \times \R^{d-1}$ and $u:\R^{d+1}\to \R$, which are higher dimensional generalizations of
the Korteweg-de Vries (KdV) equation. Following the same terminology as for the KdV equation, the standard Zakharov-Kuznetsov (ZK)
equation corresponds to the quadratic nonlinearity, i.e. $k=2$, whereas the case $k=3$ and $k\geq 4$ are called, respectively, the modified (mZK) and generalized (gZK)
Zakharov-Kuznetsov  equations. In this paper we will only be studying the mZK equation, in two and higher dimensions, so that the nonlinearity will always be  $\partial_x u^3.$

We begin with the particular case $d=2$.

\begin{thm}\label{thm:2dmzk}
	Fix $s>1/4$. Then the $H^s$-flow associated to the modified Zakharov-Kuznetsov equation on $\R^2$ exhibits a nonlinear smoothing effect of any order $\epsilon<\min\{2s-1/2,1\}$.
\end{thm}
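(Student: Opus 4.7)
The plan is to cast the nonlinear smoothing claim as a multilinear estimate in Bourgain's spaces adapted to the mZK dispersion $L(\xi,\eta)=-\xi(\xi^2+\eta^2)$, and then reduce it to a frequency-restricted estimate, in line with the general strategy put forward in the paper. Concretely, by the standard linear estimates for $X^{s,b}$, it suffices to prove a trilinear bound
\begin{equation}
\|\partial_x(u_1u_2u_3)\|_{X^{s+\epsilon,b-1}} \lesssim \prod_{j=1}^{3}\|u_j\|_{X^{s,b}}
\end{equation}
for some $b>1/2$, under the hypotheses $s>1/4$ and $\epsilon<\min\{2s-1/2,1\}$.

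By duality and Plancherel, this trilinear estimate is equivalent to a quadrilinear integral bound on the hyperplane $\sum_{j=1}^{4}(\tau_j,\xi_j,\eta_j)=0$ against $L^2$-normalized functions $g_j$. I would then apply Cauchy--Schwarz in the modulation variables $\tau_j$, using the key identity
\begin{equation}
\tau_4-L(\xi_4,\eta_4)=\sum_{j=1}^{3}\bigl(\tau_j-L(\xi_j,\eta_j)\bigr)-\Omega(\vec\xi,\vec\eta),
\end{equation}
with resonance function $\Omega=\sum_{j=1}^{3}L(\xi_j,\eta_j)-L(\xi_4,\eta_4)$, to reduce matters to a frequency-restricted estimate of the form
\begin{equation}
\sup_{\xi_4,\eta_4,\mu}\int_{\substack{\xi_1+\xi_2+\xi_3=\xi_4\\ \eta_1+\eta_2+\eta_3=\eta_4\\ |\Omega-\mu|\le 1}} \frac{\langle\xi_4\rangle^{2(s+\epsilon)}\,\xi_4^{2}}{\prod_{j=1}^{3}\langle(\xi_j,\eta_j)\rangle^{2s}}\,d\xi_1d\eta_1d\xi_2d\eta_2 \;<\; \infty,
\end{equation}
together with a symmetric variant where the largest modulation weight sits on a different leg. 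This is a pure multiplier estimate in the spatial frequencies, as announced.

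After a dyadic Littlewood--Paley decomposition of the $|\xi_j|$'s, the analysis splits into several interaction regimes. The main source of difficulty is the high-high-to-high regime, where $|\xi_1|\sim|\xi_2|\sim|\xi_3|\sim|\xi_4|\sim N$ and the resonance $\Omega$ may degenerate. For the mZK dispersion $\Omega$ factors as a polynomial in the pairwise frequency differences, which supplies a Jacobian lower bound when one integrates out one of the $\xi_j$'s against the constraint $|\Omega-\mu|\le 1$; this converts the cubic resonance condition into a pointwise bound that can be integrated in the remaining variables. The transverse variables $\eta_j$ carry no dispersion: they are either summed directly against the weights $\langle(\xi_j,\eta_j)\rangle^{-2s}$ when $s$ is large enough, or handled through a transversality argument along the level set.

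The hard part will be the sharp bookkeeping in this high-high-to-high regime. Combining the factor $N^{2(s+\epsilon)+2}$ in the numerator, the Jacobian gain extracted from the resonance factorization, the cost of the $\eta$-integrations, and the denominator $N^{-6s}$, one finds that convergence forces precisely the two constraints $\epsilon<2s-1/2$ and $\epsilon<1$, which matches the statement. The remaining high-low and parallel interactions are structurally easier and impose strictly weaker conditions, so they do not affect the final threshold.
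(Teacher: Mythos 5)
There is a genuine gap: the reduction you propose—Cauchy--Schwarz in the modulation variables, leading to a frequency-restricted estimate on the \emph{square} of the multiplier over a unit window $|\Omega-\mu|\le 1$ with a supremum in the output frequency—is structurally too lossy to reach the stated range. In the paper's language this is the route of Lemma \ref{lem:CS}, which caps the admissible growth at $\beta<1/2$, whereas the thresholds $s>1/4$, $\epsilon<\min\{2s-\tfrac12,1\}$ require $\beta$ essentially equal to $1$. Concretely, in the high$\times$high$\times$high$\to$high regime with all frequencies of size $N$, your integrand is of size $N^{2\epsilon-4s+2}$, and the set $\{|\Omega-\mu|\le 1\}$ inside the comparable-frequency region has measure about $N^{3}\cdot N^{-2}=N$ (level-set area times thickness $|\nabla\Omega|^{-1}\sim N^{-2}$), so the restricted integral is of order $N^{2\epsilon-4s+3}$; boundedness forces $\epsilon\le 2s-\tfrac32$, far short of $\epsilon<2s-\tfrac12$, and for $s$ slightly above $1/4$ it does not even yield the $\epsilon=0$ trilinear estimate. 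Your final ``bookkeeping'' paragraph asserts that the count produces exactly the claimed constraints, but it does not: no choice of Jacobian gain within this Cauchy--Schwarz framework recovers the missing full power. This is precisely why the paper abandons the pure $L^2$-multiplier reduction for this equation and instead proves Proposition \ref{prop:2dzk} via the Schur-type interpolation Lemma \ref{lem:interpol} (multiplier entering \emph{linearly}, sup over a pair of frequencies on each side, gaining $M^{1^-}$) and its anisotropic variant Lemma \ref{lem:interpol2}.

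Two further points. First, you work with the unsymmetrized symbol $L(\xi,\eta)=-\xi(\xi^2+\eta^2)$ and claim that the resonance ``factors as a polynomial in the pairwise frequency differences''; that clean KdV-type factorization is not available in that form for this symbol. The paper first applies the linear change of variables \eqref{change} (Gr\"unrock--Herr) so that the phase splits as $\Phi=\Phi^x+\Phi^y$ with each piece a one-dimensional KdV-type phase admitting the factorization $(x-x_1)(x-x_2)(x_1+x_2)$, which is what drives all the changes of variables. Second, even within the correct Schur-type framework, the nearly stationary interactions (where $|x-x_j||y-y_j|\ll|\xi|$ for all $j$) are not covered by a generic ``Jacobian lower bound plus transversality'' argument: the paper needs completions of squares/Morse's lemma, the quadratic sublevel bounds of Lemma \ref{lem:quadraticas}, and, in the fully one-dimensional degenerate case, the mKdV frequency-restricted estimate (Lemma \ref{lem:mkdv}) combined with the anisotropic Lemma \ref{lem:interpol2}. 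Your proposal would need to be rebuilt around these ingredients rather than the Cauchy--Schwarz reduction.
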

The modified Zakharov-Kuznetsov equation on $\R^2$ was proved to be locally well-posed in $H^s(\R^2)$, for $s>1/4$, in \cite{ribaudvento},
and for the endpoint $s=1/4$ in \cite{kinomzk}. Observe that the scaling critical regularity for this equation in $\R^2$ is $s_c=0$, but 
the Picard iteration method fails below $s=1/4$ as it was also proved in \cite{kinomzk} that the data to solution map then fails to be
$C^3$ and therefore is no longer analytic. Concerning the nonlinear smoothing properties of this equation, it was proved in \cite{FLPS} that
there is a gain of one derivative in the nonlinear flow for initial data in $H^s(\R^2)$, with $s\geq 1$. The current result improves
this, as the gain of one derivative is now shown to occur for $s\geq 3/4$ while also establishing that the nonlinear smoothing effect occurs starting from the optimal local well-posedness regularity that can be achieved through Picard iteration.

\bigskip

%

For dimension greater or equal than three, we are able to prove nonlinear smoothing down to the scaling regularity:

\begin{thm}\label{thm:3dmzk}
	Take $d\ge 3$ and $s>d/2-1$. Then the $H^s$-flow associated to the modified Zakharov-Kuznetsov equation on $\R^d$ exhibits a nonlinear smoothing effect of any order $\epsilon<\min\{2s-d+2,1\}$.
\end{thm}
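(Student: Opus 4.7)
The strategy is to apply the general framework developed earlier in the paper. First, the nonlinear smoothing statement of Theorem \ref{thm:3dmzk} is equivalent to the trilinear Bourgain estimate
\begin{equation}
\|\partial_x(u_1 u_2 u_3)\|_{X^{s+\epsilon,b-1}} \lesssim \prod_{i=1}^3 \|u_i\|_{X^{s,b}}
\end{equation}
for some $b>1/2$ close to $1/2$, with $X^{s,b}$ adapted to the mZK symbol $L(\xi)=\xi_1|\xi|^2$ on $\R^d$. By the general $TT^{*}$/Cauchy--Schwarz reduction used throughout the paper, this in turn reduces to a frequency-restricted estimate of the form
\begin{equation}
\sup_{\eta\in\R^d,\;\alpha\in\R}\int_{\R^{2d}} \frac{\jap{\eta}^{2(s+\epsilon)+2}}{\jap{\xi_1}^{2s}\jap{\xi_2}^{2s}\jap{\eta-\xi_1-\xi_2}^{2s}}\,\fia\, d\xi_1\, d\xi_2 \lesssim M^{1-\delta}
\end{equation}
for some $\delta>0$, where the mZK resonance function, writing $\xi_i=(a_i,b_i)\in\R\times\R^{d-1}$, $\eta=(a_0,b_0)$ and $\xi_3=\eta-\xi_1-\xi_2$, is
\begin{equation}
\Phi \;=\; \sum_{i=1}^{3} a_i\bigl(a_i^2+|b_i|^2\bigr) - a_0\bigl(a_0^2+|b_0|^2\bigr).
\end{equation}
The factor $\jap{\eta}^{2}$ encodes the single derivative in the nonlinearity.

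Second, I would perform a dyadic Littlewood--Paley decomposition, setting $|\xi_i|\sim N_i$ and $|\eta|\sim N$. The only genuinely hard regime is the high--high--high $\to$ high configuration with $N\sim N_1\sim N_2\sim N_3$; the remaining dyadic interactions are disposed of by the Sobolev slack $s>d/2-1$. In the hard regime I would use the level-set constraint to integrate out the first coordinate $a_1$ of $\xi_1$. A direct calculation gives
\begin{equation}
\partial_{a_1}\Phi = 3\bigl(a_1^2-a_3^2\bigr) + |b_1|^2 - |b_3|^2,
\end{equation}
which is of order $N^{2}$ generically, and hence turns $|\Phi-\alpha|<M$ into an interval of length $\sim M/N^2$ in the $a_1$ variable. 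The remaining $(2d-1)$-dimensional transverse integral can then be controlled by the $L^1$-integrability of $\jap{\xi}^{-2s}$ in $\R^{d-1}$, which holds precisely when $2s>d-2$, i.e.\ $s>d/2-1$. Putting everything together, the balance between the $M/N^2$ gain and the $\jap{\eta}^{2(s+\epsilon)+2}$ factor yields the admissible range $\epsilon<\min\{2s-d+2,1\}$.

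The main obstacle is the degenerate locus where $\partial_{a_1}\Phi$ vanishes, i.e.\ the hypersurface $3(a_1^2-a_3^2)+|b_1|^2-|b_3|^2=0$; on this set the above change of variables fails. To deal with it I would split, according to a parameter $\mu$, into the non-degenerate region $\{|\partial_{a_1}\Phi|\gtrsim\mu N^2\}$, where the previous argument gives the full $M^{1-\delta}$ gain, and a complement whose $a_1$-measure is $O(\mu)$. On the complement one uses the symmetry in $\xi_1,\xi_2$ to pivot to $a_2$ (whose vanishing locus is transverse to that of $\partial_{a_1}\Phi$) or applies second-order van der Corput information on $\Phi$, yielding a weaker $M^{1/2}$-gain on a thinner set, and then optimizes in $\mu$. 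The crucial feature absent from Theorem \ref{thm:2dmzk} is the $(d-1)$-dimensional transverse integration: the larger $d$, the more room the Sobolev weights have to decay, which is precisely what allows the argument to reach the scaling critical threshold $s_c=d/2-1$ for $d\geq 3$.
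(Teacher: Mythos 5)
There is a genuine gap, and it appears at the very first reduction. You reduce the trilinear estimate to a frequency-restricted estimate of Cauchy--Schwarz type: squared weights $\jap{\xi}^{2(s+\epsilon)+2}/\prod_j\jap{\xi_j}^{2s}$, a supremum over $\xi,\alpha$ only, and integration over all $2d$ variables $(\xi_1,\xi_2)$, asking for a bound $M^{1-\delta}$. This is exactly Lemma \ref{lem:CS}, which caps the admissible exponent at $\beta<1/2$, and the estimate you need is false near the claimed regularity. In the high--high--high interaction $|\xi|\sim|\xi_1|\sim|\xi_2|\sim|\xi_3|\sim N$ the weight is $\sim N^{2+2\epsilon-4s}$, while the set $\{|\xi_1|,|\xi_2|\sim N,\ |\Phi-\alpha|<M\}$ has measure $\sim MN^{2d-3}$ (a single scalar constraint with $|\nabla\Phi|\sim N^2$ --- your $a_1$-integration exploits exactly this and nothing more), so the quantity you must bound is $\sim MN^{2d-1+2\epsilon-4s}$; for $s$ close to $d/2-1$ this is $\sim MN^{3+2\epsilon}$, an unbounded positive power of $N$. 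Cauchy--Schwarz can only reach roughly $s>d/2-1/4$, not the scaling threshold; the paper explicitly notes this limitation and therefore uses the Schur-type interpolation Lemma \ref{lem:interpol} (with $A=\{0,2\}$) instead, where the multiplier enters to the \emph{first} power and is split into the pair of estimates $\sup_{\xi,\xi_2}\int \jap{\xi}^{s+1+\epsilon}\jap{\xi_1}^{-s}\jap{\xi_3}^{-2s}\fia\, d\xi_1\lesssim M^{1^-}$ and its counterpart with $\xi$ as integration variable; this allows $\beta$ up to $1$ and makes the power counting close precisely at $s>d/2-1$, $\epsilon<2s-d+2$.

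Second, your treatment of the degenerate locus (where $\partial_{a_1}\Phi$ vanishes) is the actual heart of the matter and is left essentially unproved: ``pivot to $a_2$'' or ``second-order van der Corput, then optimize in $\mu$'' does not produce the needed full power of $M$ with the correct powers of $N$. The paper's proof hinges on a structural fact special to $d\ge3$: after a permutation of $\xi_1,\xi_2,\xi_3$, the Hessian of the rescaled phase has rank at least two (thanks to the $(d-1)\times(d-1)$ block $(p_1^1+p_3^1)I$), so Morse's splitting lemma yields two nondegenerate quadratic directions and Lemma \ref{lem:quadraticas} then gives the full $M^{1^-}$ on the stationary region; this is exactly the ingredient unavailable in $d=2$, which is why Theorem \ref{thm:2dmzk} requires a different argument. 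Finally, a smaller but real slip: integrability of $\jap{\cdot}^{-2s}$ over $\R^{d-1}$ requires $2s>d-1$, not $2s>d-2$; in the paper the unbalanced-frequency case closes only because the transverse integral is paired with the extra factor $|\xi|^{-(1^--\epsilon)}$ coming from the change of variables in $\Phi$, and it is this combination that produces the constraint $\epsilon<2s-d+2$.
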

There are few works studying higher dimensional versions of the mZK equation. Local well-posedness in three
dimensions was proved in the full subcritical range $s>1/2$ in \cite{axelmzk}, while small data global
well-posedness in the scaling critical regularity $s_c=d/2-1$ is established in \cite{kinomzk}. No nonlinear
smoothing results were previously known for the mZK equation, in dimensions three or higher.

\bigskip

The nonlinear Schrödinger  equation, arguably the most studied of all the nonlinear dispersive partial differential equations, is given by
\begin{equation}\label{Schr}\tag{NLS}
	i\partial_t u + \Delta u = |u|^\alpha u,\qquad u(0,x)=u_0(x) \in H^s(\R^d),
\end{equation}
for $\alpha \in \R^+$ and $u:\R^{d+1}\to \C$. Although general, non integer, powers of the nonlinearity are usually considered for the \ref{Schr} equation, our method can only be applied to integer powers as we use the Fourier transform to turn the products into convolutions. Specifically, in order to write the absolute value of the
complex solution in terms of the product of $u$ with its conjugate $\bar u$, we will only consider here even integers $\alpha=2n$, $n \in \N$, and we prove the following nonlinear smoothing results for the cubic ($n=1$) and quintic ($n=2$) nonlinearities in any dimension.
\begin{thm}\label{thm:NLScubic}
	For $d\ge 2$ and $s>d/2-1$, the $H^s$-flow associated to the cubic nonlinear Schrödinger equation on $\R^d$ exhibits a nonlinear smoothing effect of any order $\epsilon<\min\{2s-d+2,1\}$.
\end{thm}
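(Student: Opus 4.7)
The strategy mirrors the earlier results of the paper: reduce the smoothing claim to a Bourgain-space trilinear estimate, and then, via the general mechanism set up earlier, to a frequency-restricted estimate (FRE). Writing $N(u) = |u|^2 u = u \bar u u$ and taking $L(\xi) = -|\xi|^2$, the goal is
\begin{equation}
\|u_1 \bar u_2 u_3\|_{X^{s+\epsilon,b-1}} \lesssim \prod_{j=1}^{3}\|u_j\|_{X^{s,b}}
\end{equation}
for some $b>1/2$ arbitrarily close to $1/2$. By duality and Plancherel on the affine set $\xi = \xi_1-\xi_2+\xi_3$, $\tau=\tau_1-\tau_2+\tau_3$, a Cauchy--Schwarz in the modulation variables reduces this --- as in the proofs of Theorems~\ref{thm:2dmzk} and \ref{thm:3dmzk} --- to the purely spatial estimate
\begin{equation}\label{fre-nls3}
\sup_{\xi\in\R^d,\ \alpha\in\R}\ \langle\xi\rangle^{2(s+\epsilon)} \int_{\R^{2d}} \frac{\mathbbm{1}_{|\Phi-\alpha|<M}}{\langle\xi_1\rangle^{2s}\langle\xi_2\rangle^{2s}\langle\xi_3\rangle^{2s}}\,d\xi_1\,d\xi_2\ \lesssim\ M,
\end{equation}
uniformly in $M\geq 1$, where $\xi_3 = \xi-\xi_1+\xi_2$.

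The key simplification for the cubic NLS is the clean factorization of the resonance function:
\begin{equation}
\Phi \;=\; |\xi|^2 - |\xi_1|^2 + |\xi_2|^2 - |\xi_3|^2 \;=\; 2(\xi-\xi_1)\cdot(\xi_1-\xi_2).
\end{equation}
To prove \eqref{fre-nls3}, I would dyadically decompose $|\xi_j|\sim N_j$ and split into the standard high-high-to-low, high-low, and low-frequency cases. Setting $\eta = \xi - \xi_1$ and $\zeta = \xi_1 - \xi_2$, the modulation constraint $|2\eta\cdot\zeta - \alpha|<M$ confines $\zeta$ to a slab of thickness $O(M/|\eta|)$ in the direction of $\eta$; integrating in that scalar direction produces the factor $M$ together with a gain $|\eta|^{-1}$, while the $d-1$ transverse $\zeta$-directions are controlled by the weights $\langle\xi_j\rangle^{-2s}$, which are $L^1$ on $\R^{d-1}$ precisely when $2s>d-2$, i.e.\ $s > s_c = d/2 - 1$. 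The smoothing exponent $\epsilon$ is then paid either by the slab gain $|\eta|^{-1}\lesssim \langle\xi\rangle^{-1}$, forcing $\epsilon \leq 1$, or, in the high-high-to-low regime $N_1\sim N_2\gg N$, by trading $\langle\xi\rangle^{s+\epsilon}$ against a high-frequency $\langle\xi_j\rangle^{-s}$, which requires $\epsilon<2s-d+2$.

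The main obstacle is the geometry of the resonant set $\{\Phi \approx 0\}$, which is a codimension-one subvariety of $\R^{2d}$ of infinite measure: there the modulation contributes no decay, and the entire integral must be controlled by the $\langle\xi_j\rangle^{-2s}$ weights against the large prefactor $\langle\xi\rangle^{2(s+\epsilon)}$. The delicate configuration is the high-high-to-low interaction $N_1\sim N_2\gg\langle\xi\rangle$ with $\eta$ nearly orthogonal to $\zeta$; isolating this regime uniformly in $\xi$ and $\alpha$, and extracting the sharp threshold $\epsilon<\min\{2s-d+2,\,1\}$ at its endpoint, is the technical heart of the proof and is exactly what pins the admissible range to the scaling-critical regularity $s > d/2 - 1$.
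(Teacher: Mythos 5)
There is a genuine gap, and it lies precisely in the reduction step you chose. You reduce to a Cauchy--Schwarz-type frequency-restricted estimate in which the supremum is taken only over $(\xi,\alpha)$ and one integrates over the full $2d$-dimensional set $\{\xi_3=\xi-\xi_1+\xi_2\}$, asking for a bound $\lesssim M$. This route cannot reach $s>d/2-1$. First, the Cauchy--Schwarz reduction (Lemma \ref{lem:CS}) only tolerates a bound $M^{2\beta}$ with $\beta<-b'<1/2$, i.e.\ strictly below a full power of $M$, so even if your estimate \eqref{fre-nls3} were true it would not close the dyadic sum in modulation. Second, and more fundamentally, \eqref{fre-nls3} is false in the claimed range: take $d=2$, $\xi=0$, $\alpha=0$ and $s$ close to $0$. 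With $\zeta=\xi_1-\xi_2$ the constraint is $|2\xi_1\cdot\zeta|<M$, a slab of thickness $M/|\xi_1|$, but the transverse $\zeta$-integration carries the weight $\jap{\zeta_\perp}^{-4s}$ on $\R^{d-1}$, which is integrable only when $4s>d-1$ (your stated threshold ``$2s>d-2$'' is not the $L^1(\R^{d-1})$ condition), and even then the remaining integration in $\xi_1$ of $\jap{\xi_1}^{-2s}M|\xi_1|^{-1}$ over $\R^d$ requires roughly $s>(d-1)/2$. So the ``technical heart'' you defer to at the end is not attainable along the proposed path; the divergence is exactly the loss that Cauchy--Schwarz in all variables incurs, as the paper points out when noting that $\beta<1/2$ is far from the scaling-critical $\beta=1$.

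The paper's proof avoids this by using the Schur-type interpolation reduction (Lemma \ref{lem:interpol}): one proves two frequency-restricted estimates of the form $\sup_{\xi,\xi_2,\alpha}\int \jap{\xi}^{s+\epsilon}\jap{\xi_1}^{-s}\jap{\xi_3}^{-2s}\fia\,d\xi_1\lesssim M^{1^-}$ and its counterpart with $(\xi_1,\xi_3)$ fixed, i.e.\ supremum over two frequencies and a single $d$-dimensional integration, which both keeps the integrals convergent for $s$ near critical and allows $\beta$ up to $1^-$. The nonresonant region $|\xi_1-\xi_3|\gtrsim|\xi|$ (resp.\ $|\xi+\xi_2|\gtrsim|\xi|$) is handled by the change of variables $x_1\mapsto\Phi^x$, and the resonant configurations $\xi_1\simeq\xi_3$, $\xi\simeq-\xi_2$ by completing the square and passing to polar coordinates; the case $d\ge3$ is then obtained from the two-dimensional estimate by the dimensional-descent Lemma \ref{lem:descent}, not by a direct slab count in $\R^{2d}$. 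Your factorization $\Phi=2(\xi-\xi_1)\cdot(\xi_1-\xi_2)$ is correct and the dyadic case split is reasonable, but without switching to the Schur/interpolation framework (or an equivalent device that trades integration for suprema) the argument does not yield $\epsilon<\min\{2s-d+2,1\}$ for all $s>d/2-1$.
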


\begin{thm}\label{thm:NLSquintic}
	For $d\ge 2$ and $s>(d-1)/2$, the $H^s$-flow associated to the quintic nonlinear Schrödinger equation on $\R^d$ exhibits a nonlinear smoothing effect of any order $\epsilon<\min\{4s+2-2d,1\}$.
\end{thm}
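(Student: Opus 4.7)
The plan is to invoke the general reduction, developed earlier in the paper, from a nonlinear smoothing estimate in Bourgain spaces to a frequency-restricted estimate (FRE) depending only on spatial frequencies. Writing the quintic nonlinearity as $|u|^4u=u\bar u u\bar u u$ and taking $L(\xi)=-|\xi|^2$, the target bound
$$\|u_1\bar u_2 u_3\bar u_4 u_5\|_{X^{s+\epsilon,b-1}}\lesssim \prod_{j=1}^5\|u_j\|_{X^{s,b}},\qquad b>\tfrac12,$$
is transformed by duality and Cauchy-Schwarz in the modulation variables $\sigma_j=\tau_j\pm|\xi_j|^2$ (with signs dictated by the conjugation pattern) into the uniform estimate
$$\sup_{\xi_0\in\R^d,\,\alpha\in\R}\int_{\xi_1-\xi_2+\xi_3-\xi_4+\xi_5=\xi_0}\frac{\langle\xi_0\rangle^{2(s+\epsilon)}\,\mathbbm{1}_{|\Phi-\alpha|<M}}{\langle\xi_1\rangle^{2s}\cdots\langle\xi_5\rangle^{2s}}\,d\xi_2\,d\xi_3\,d\xi_4\,d\xi_5\lesssim M^{1-\delta}$$
for some $\delta>0$, where the quintic resonance phase is
$$\Phi=|\xi_0|^2-|\xi_1|^2+|\xi_2|^2-|\xi_3|^2+|\xi_4|^2-|\xi_5|^2$$
evaluated on the linear constraint.

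I would tackle this FRE via a Littlewood-Paley decomposition with dyadic scales $N_j\sim\langle\xi_j\rangle$, reducing by permutation symmetry to $N_1\geq N_2\geq\cdots\geq N_5$ and thus $N_0\lesssim N_1$. The key geometric input is the Lebesgue measure of the resonance slab $\{|\Phi-\alpha|<M\}$ inside the product of dyadic annuli. Since $\partial_{\xi_j}\Phi$ is of the form $\pm 2(\xi_1\pm\xi_j)$ on the constraint and so generically of size $\sim N_1$, slicing in the direction of $\xi_1$ within the $\xi_2$ variable converts the $d$-dimensional ball of radius $N_2$ into a slab of thickness $M/N_1$ times a $(d-1)$-dimensional disk, while $\xi_3,\xi_4,\xi_5$ remain free in their respective annuli. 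This yields, in the non-degenerate regime, the model measure bound
$$M\cdot N_1^{-1}\cdot N_2^{d-1}\cdot N_3^{d}\cdot N_4^{d}\cdot N_5^{d}.$$
Combined with the weights $\langle\xi_0\rangle^{2(s+\epsilon)}\prod_j\langle\xi_j\rangle^{-2s}$, the ensuing dyadic sum converges precisely under $s>(d-1)/2$ and $\epsilon<\min\{4s+2-2d,1\}$: the boundary $\epsilon<1$ is imposed by the structural $N_1^{-1}$ in the measure, while $\epsilon<4s+2-2d$ reflects the balance between the $N_0^{2\epsilon}$ gain and the accumulated weight mismatch $N_j^{d-2s}$ in the high-high interactions.

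The main obstacle I anticipate is the angular-degenerate regime where two or more of $\xi_1\pm\xi_j$ become nearly parallel, so that $|\nabla\Phi|$ drops below its generic size $N_1$ in several coordinates simultaneously and the naive slab estimate is insufficient to close the argument. To handle it, I would localize in angular sectors around the direction of $\xi_1$ and replace the single thinning $M/N_1$ by a product of angular-radial bounds that exploit the dominant quadratic contribution $|\xi_1|^2\approx\alpha+O(N_2^2)$ when the remaining frequencies are comparatively small. This is the direct quintic analogue of the mechanism employed for cubic NLS in Theorem~\ref{thm:NLScubic}, with the additional pair of conjugated factors tracked through the decomposition. A standard summation over the dyadic parameters then delivers the FRE with the required sub-$M$ power and, through the framework recalled above, completes the proof of the nonlinear smoothing estimate.
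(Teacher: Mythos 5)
There is a genuine gap, and it lies in the very first step: you reduce the smoothing estimate ``by duality and Cauchy--Schwarz in the modulation variables'' to a frequency-restricted estimate for the \emph{squared} multiplier, with a supremum only over $\xi_0$ and $\alpha$ and integration over all the remaining spatial frequencies. This is exactly the reduction of Lemma \ref{lem:CS}, and, as discussed in the introduction, it caps the admissible power at $\beta<1/2$ (i.e.\ a bound $M^{1^-}$ for the squared-weight integral), while the claimed range of $\epsilon$ requires the scaling-level $\beta=1$. A dyadic count in the regime where all five frequencies are comparable to $N$ makes this concrete: the slab $\{|\Phi-\alpha|<M\}$ inside the $4d$-dimensional integration region has measure about $MN^{4d-2}$ (your own ``model measure bound'' with all $N_j\sim N$), and the squared weight contributes $N^{2\epsilon-8s}$, so your FRE forces $4d-2+2\epsilon-8s\le 0$, i.e.\ $\epsilon\le 4s+1-2d$ and $s>d/2-1/4$ for any gain at all. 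This is one full power short of the stated $\epsilon<4s+2-2d$ and does not reach $s>(d-1)/2$; your assertion that the dyadic sum converges ``precisely'' under the hypotheses of the theorem is therefore not consistent with your own measure bound. The angular-degenerate regime you flag is not the real obstruction; the loss is structural to the Cauchy--Schwarz reduction itself.

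The paper avoids this by using the Schur-test/interpolation reduction (Lemma \ref{lem:interpol}) rather than Lemma \ref{lem:CS}: for $d=2$ one takes $A=\{0,2,4\}$ and interpolates between two estimates for the \emph{unsquared} multiplier, split as $\mathcal{M}_1,\mathcal{M}_2$ with geometric mean equal to the full weight, namely $\sup_{\xi,\xi_2,\xi_4}\int \jap{\xi}^{s+\epsilon}\jap{\xi_1}^{-s}\jap{\xi_3}^{-2s}\jap{\xi_5}^{-2s}\fia\, d\xi_1 d\xi_3\lesssim M^{1^-}$ and its counterpart with the roles of $(\xi_2,\xi_4)$ and $(\xi_3,\xi_5)$ exchanged; each is proved by either a nonstationary change of variables $x_1\mapsto \Phi^x$ or, in the resonant case $\xi_1\simeq\xi_5$, by completing the square and passing to polar coordinates. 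For $d\ge 3$ the paper does not redo the analysis in higher dimensions but invokes the descent Lemma \ref{lem:descent}, splitting $\xi=(\eta,\zeta)\in\R^2\times\R^{d-2}$ and absorbing the extra dimensions through integrable $\jap{\zeta_j}^{-(d-2)^+}$ factors, which is how the exponents $s>(d-1)/2$, $\epsilon<4s+2-2d$ survive in general dimension. To repair your argument you would need to recast your dyadic/slicing analysis as the two complementary Schur-type estimates (fixing $\xi,\xi_2,\xi_4$ and then $\xi_1,\xi_3,\xi_5$), not as a single $L^2$-in-$\xi$ bound.
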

The local well-posedness theory for the NLS has been known for many years, having been proved in \cite{caze} down to the scaling critical regularity
$s_c=d/2-1/n$. Nonlinear smoothing effects have also been studied in dimensions higher than one, in \cite{BPSS} and \cite{bourgain}, with the former showing a gain of almost a full derivative in the nonlinear flow for initial data in 
$H^s(\R^d)$, with $s>d/2-1/(4n)$ and powers of the nonlinearity cubic or higher, which is our case. Our results here, however,
show that a gain of a full derivative occurs for $s>d/2-1/2$ and $s>d/2-1/4$, for the cubic and quintic cases respectively, improving
the result in \cite{BPSS}. 

\bigskip

Our last result concerns the quartic, generalized Korteweg-de Vries equation
\begin{equation}\label{4kdv}\tag{4-KdV}
	\partial_t u + \partial_x^3  u = \partial_x u^4,\qquad u(0,x)=u_0(x) \in H^s(\R),
\end{equation}
with $u:\R^2 \to \R$, for which we establish the following nonlinear smoothing result:
\begin{thm}\label{thm:KdV}
	Fix $s>-1/6$. Then the $H^s$-flow associated to the quartic Korteweg-de Vries equation on $\mathbb{R}$ exhibits a nonlinear smoothing effect of any order $\epsilon<\min\{ 3s+1/2, s+1/2, 1\}$.
\end{thm}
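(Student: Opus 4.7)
Following the general scheme outlined in the introduction, my first step would be to reduce the claimed nonlinear smoothing, via \eqref{bourgain:smooth}, to the Airy-adapted Bourgain-space four-linear estimate
\begin{equation*}
\|\partial_x(u_1 u_2 u_3 u_4)\|_{X^{s+\epsilon,b-1}} \lesssim \prod_{i=1}^{4}\|u_i\|_{X^{s,b}}
\end{equation*}
for some $b>1/2$ chosen sufficiently close to $1/2$. By duality, Plancherel, and a Cauchy--Schwarz argument in the modulation variables using the resonance identity $\tau-\xi^3 = \sum_i(\tau_i-\xi_i^3)+\Phi$ with
\begin{equation*}
\Phi(\xi_1,\xi_2,\xi_3,\xi_4) = \sum_{i=1}^{4}\xi_i^3 - \Big(\sum_{i=1}^{4}\xi_i\Big)^3,
\end{equation*}
this multilinear estimate is then reduced to a frequency-restricted estimate of the form
\begin{equation*}
\sup_{\alpha,\xi\in\mathbb{R}}\ \int_{\substack{\xi_1+\xi_2+\xi_3+\xi_4=\xi\\ |\Phi-\alpha|<M}} \frac{\xi^2\,\langle\xi\rangle^{2(s+\epsilon)}}{\prod_{i=1}^{4}\langle\xi_i\rangle^{2s}}\,d\xi_1\,d\xi_2\,d\xi_3 \;\lesssim\; M^{1-\delta}
\end{equation*}
for some $\delta>0$ and every $M\ge 1$. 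From this point on, the analysis is purely on the spatial frequency side.

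To prove the frequency-restricted estimate I would dyadically localize $|\xi_i|\sim N_i$ with $N_1\ge N_2\ge N_3\ge N_4$, eliminate $\xi_4 = \xi-\xi_1-\xi_2-\xi_3$, and exploit the identity
\begin{equation*}
\partial_{\xi_j}\Phi = 3(\xi_j-\xi_4)(\xi_j+\xi_4),\qquad j=1,2,3.
\end{equation*}
Applying the coarea formula with respect to the variable $\xi_j$ maximizing $|\partial_{\xi_j}\Phi|$ confines that variable to a set of measure controlled by $M$ divided by the corresponding partial derivative (of size $\sim N_1^2$ in the generic regime), while the remaining two variables are integrated trivially over their dyadic boxes. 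Summing over dyadic scales produces the three threshold conditions $\epsilon<1$, $\epsilon<s+1/2$, $\epsilon<3s+1/2$, arising respectively from the derivative loss due to the $|\xi|$-factor in the nonlinearity, the high-low regime in which a single frequency dominates and $|\xi|\sim N_1$, and the fully resonant regime in which all four frequencies are comparable and much larger than $|\xi|$.

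The main obstacle will be this last, fully resonant regime: when $N_1\sim N_2\sim N_3\sim N_4$ are all large while $|\xi|$ is small, $\nabla_{\xi_1,\xi_2,\xi_3}\Phi$ can nearly vanish along the constraint surface, and the naive coarea bound becomes wasteful. Here one must exploit the full algebraic structure of $\Phi$---for instance by passing to symmetric combinations of the $\xi_i$ so that $\{\Phi=\alpha\}$ admits a cleaner parametrization, or by iterating the coarea step using second-order information on $\Phi$---in order to extract the sharp $M^{1-\delta}$ scaling. It is precisely in this regime that the scaling-critical exponent $s=-1/6$ appears as a natural barrier, and saturating it (up to the open condition $s>-1/6$) is what makes the quartic case more delicate than the lower-order KdV-type nonlinearities analyzed elsewhere in the paper.
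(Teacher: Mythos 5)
Your overall scheme (reduce to a quadrilinear estimate in Airy--Bourgain spaces, then to a multiplier estimate in the spatial frequencies on the set $|\Phi-\alpha|<M$) is the same as the paper's, but the specific reduction you choose --- Cauchy--Schwarz in the modulation variables, leading to the requirement $\sup_{\alpha,\xi}\int_{\Gamma_\xi} \xi^2\langle\xi\rangle^{2(s+\epsilon)}\prod_i\langle\xi_i\rangle^{-2s}\,\mathbbm{1}_{|\Phi-\alpha|<M}\,d\xi_1d\xi_2d\xi_3\lesssim M^{1-\delta}$ --- is exactly the route of Lemma \ref{lem:CS} (estimate \eqref{eq:cauchy}), which the paper points out is too weak near scaling, and in this problem the condition you would need is in fact \emph{false} in the claimed range. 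Take $|\xi|\sim|\xi_1|\sim\dots\sim|\xi_4|\sim N$: the integrand's multiplier is $\sim N^{2+2\epsilon-6s}$ on that box, while for a suitable $\alpha$ the set $\{|\Phi-\alpha|<M\}$ inside the box has measure $\gtrsim M$ (the range of $\Phi$ there has length $\lesssim N^3$, so averaging in $\alpha$ gives a level-set neighborhood of measure at least $\sim M$, already at non-stationary points where $|\nabla\Phi|\sim N^2$). Hence the left-hand side is $\gtrsim N^{2+2\epsilon-6s}M$, unbounded in $N$ whenever $s<(1+\epsilon)/3$ --- in particular for every $s$ near $-1/6$, even with $\epsilon=0$ and $M=1$. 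The same loss is visible in your plan of ``coarea in the best variable plus trivial integration of the remaining two'': the trivial integration costs the full $N^2$. This is structural: squaring the multiplier and integrating over all three free frequencies at once cannot work here, and no refinement confined to the fully resonant regime (symmetric variables, iterated coarea) can repair it, because the divergence above already occurs at generic points of the constraint surface.

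The paper's proof of Proposition \ref{prop:4kdv} avoids this through the Schur-type interpolation reduction of Lemma \ref{lem:interpol}: the multiplier $\mathcal{M}$ (not $\mathcal{M}^2$) is split into two factors whose geometric mean is $\mathcal{M}$ --- concretely, the half-powers $\langle\xi_1\rangle^{1/2}\langle\xi_2\rangle^{1/2}$ are shifted between the two dual estimates \eqref{eq:4kdvinter1}--\eqref{eq:4kdvinter2} (and \eqref{eq:4kdvinterp1}--\eqref{eq:4kdvinterp2}) --- and each factor is integrated only over a complementary subset of frequencies, with the admissible bound now $M^{\beta}$, $\beta<1$, instead of the effective $\beta<1/2$ of the Cauchy--Schwarz route. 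The second ingredient you are missing is the quantitative treatment of the near-stationary interactions: there the paper rescales $\Phi=\xi^3P(\vec p)$, verifies nondegeneracy of the Hessian at the relevant critical points, and applies Morse's lemma (with parameters) so that the restriction becomes a two-dimensional quadratic sublevel set, to which \eqref{eq:polares} of Lemma \ref{lem:quadraticas} gives the full $M^{1^-}$; a one-variable analysis only yields $M^{1/2}$, which is again insufficient at this regularity. Finally, the threshold $\epsilon<s+1/2$ does not come out of the multilinear estimate in the way you describe: for $-1/6<s<0$ Proposition \ref{prop:4kdv} only needs $\epsilon<\min\{3s+1/2,1\}$, and, as the paper remarks after Conjecture 1, the $s+1/2$ restriction reflects frequency interactions where the multilinear estimate itself fails, so it cannot simply be read off a high--low dyadic case in your scheme.
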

Local well-posedness for the \ref{4kdv} equation was proved in \cite{axel}, for $s>-1/6$, which is precisely the scaling critical
regularity for this equation. To the best of our knowledge, no results of nonlinear smoothing properties for this equation were known before. 

%
\vskip15pt

The results described above, together with \cite{CS, tzirakis7, tzirakis1, tzirakis2, IMT}, among others, indicate that the nonlinear smoothing effect is a general feature of dispersive equations. The diversity of these situations has lead us to formulate the following conjecture: 

\begin{conj}
Given a nonlinear dispersive equation with dispersion of order $l$ and a nonlinearity of order $k\ge 3$ with a loss of $m$ derivatives, let $s_0$ be such that, for $s>s_0$, the $H^s$-flow exists and is analytic. Then the flow exhibits a nonlinear smoothing effect of order $\epsilon<\min\{(k-1)(s-s_0),l-m-1\}$.
\end{conj}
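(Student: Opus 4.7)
The plan is to reduce the conjectured smoothing estimate, in the multilinear Bourgain form \eqref{bourgain:smooth}, to a single frequency-restricted estimate, following the methodology used above for the five concrete equations, and then to read off the two thresholds $(k-1)(s-s_0)$ and $l-m-1$ as the outputs of two competing mechanisms inside that FRE.

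First I would set up the dual form of \eqref{bourgain:smooth}: by Plancherel and duality it is equivalent to a $(k+1)$-linear integral over the constraint surfaces $\xi_0=\xi_1+\cdots+\xi_k$ and $\tau_0=\tau_1+\cdots+\tau_k$, weighted by $\langle \xi_0\rangle^{s+\epsilon}|\xi_0|^m\prod_j \langle \xi_j\rangle^{-s}$ together with the Bourgain modulations $\langle \tau_j-L(\xi_j)\rangle^{-b}$. Using $b>1/2$ to sum the modulation weights in time via Cauchy--Schwarz, and localizing onto the resonance function
\begin{equation*}
\Phi(\xi_0,\ldots,\xi_k)=L(\xi_0)-\sum_{j=1}^{k}L(\xi_j),
\end{equation*}
reduces the problem to a frequency-restricted estimate of the shape: for every $\alpha\in\R$ and $\xi_0\in\R^d$,
\begin{equation*}
\langle \xi_0\rangle^{2(s+\epsilon)}|\xi_0|^{2m}\int_{\xi_1+\cdots+\xi_k=\xi_0,\ |\Phi-\alpha|<1}\prod_{j=1}^{k}\langle \xi_j\rangle^{-2s}\,d\xi_1\cdots d\xi_{k-1}\ \leq\ C.
\end{equation*}

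Second I would perform a dyadic decomposition $\langle \xi_j\rangle\sim N_j$ with $N_1\geq\cdots\geq N_k$, and examine two extreme regimes. In the \emph{balanced} regime $N_1\sim\cdots\sim N_k\sim\langle\xi_0\rangle\sim N_0$, the only available gain is the co-area contribution from $|\Phi-\alpha|<1$: since $|\nabla_{\xi_j}\Phi|\sim N_0^{l-1}$ on the constraint hyperplane away from degenerate points of $L$, this slice has measure of order $N_0^{-(l-1)}$ in one direction, which after combining with the derivative loss $|\xi_0|^{2m}\sim N_0^{2m}$ yields precisely the upper bound $\epsilon<l-m-1$. In the \emph{high--low} regime $N_1\sim N_0\gg N_2,\ldots,N_k$, the $k-1$ low inputs each range over a $d$-ball of volume $\sim N_j^d$ weighted by $\langle\xi_j\rangle^{-2s}$, and at the analytic well-posedness threshold $s=s_0$ this power counting is tight by the very definition of $s_0$; for $s>s_0$ one collects a surplus $\sim N_j^{-2(s-s_0)}$ per low input, summing to a gain of $(k-1)(s-s_0)$ derivatives on $\langle\xi_0\rangle^{2\epsilon}$. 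Taking the minimum of the two bounds reproduces the conjectured threshold.

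The main obstacle, and the reason the statement remains a conjecture rather than a theorem, is that the co-area estimate in the balanced regime is sharp only when $\Phi$ is transverse to the convolution hyperplane $\{\xi_1+\cdots+\xi_k=\xi_0\}$. For a general $L$ the set $\{\Phi=\alpha\}$ may be tangent to this hyperplane along positive-dimensional strata, or contain critical points where the lower bound $|\nabla\Phi|\sim N_0^{l-1}$ fails, and each of the five equations treated above required an equation-specific analysis of such strata. A genuinely general theorem would need either an abstract non-degeneracy hypothesis on $L$ or a normal-form / iterated-integration-by-parts argument to neutralize the tangential contributions. A secondary difficulty, already visible in Theorem \ref{thm:KdV} through the extra $s+1/2$ clause, is that intermediate sub-regimes where one frequency is very small can produce further thresholds absent from the conjecture; matching $(k-1)(s-s_0)$ without $\epsilon$-losses that accumulate with $k$, and ruling out such spurious thresholds in general, is what I expect to be the genuinely hard part.
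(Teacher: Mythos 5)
The statement you are addressing is Conjecture 1, which the paper does not prove: it is formulated as an open problem, supported only by the five theorems (and the cited literature) as evidence. So there is no proof of the paper's to compare against, and your text should be judged as what it is — a heuristic reduction plus power counting — rather than a proof. As a heuristic it is consistent with the paper's methodology (duality, localization on the resonance function, frequency-restricted estimates, and the two competing mechanisms producing the caps $(k-1)(s-s_0)$ and $l-m-1$), and your closing paragraph correctly identifies the genuine obstructions. But several of your intermediate steps are not merely "technical": they are exactly the points the paper cannot do in general. First, the reduction you state, to a single sublevel estimate on $|\Phi-\alpha|<1$, discards the dyadic structure in the modulation; what is actually needed (Lemmas \ref{lem:CS} and \ref{lem:interpol}) is a bound $M^{\beta}$ uniformly over all dyadic $M>1$ with $\beta$ strictly below the admissible exponent, and a plain Cauchy--Schwarz reduction caps $\beta<1/2$, far from the scaling-critical $\beta=1$; reaching the conjectured thresholds forces the Schur-type interpolation of Lemma \ref{lem:interpol}, whose application requires splitting the multiplier and choosing the index set $A$ case by case — there is no general recipe for this choice. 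Second, your balanced-regime claim that $|\nabla_{\xi_j}\Phi|\sim N_0^{l-1}$ away from degenerate points is precisely where the paper's proofs spend most of their effort: in the comparable-frequency regime the phase is typically stationary, and the gain is recovered only through Morse's lemma combined with dimension-sensitive quadratic sublevel estimates (Lemma \ref{lem:quadraticas}); Remarks \ref{nota:espaco} and the final remark of Section 3 show that the available power of $M$ depends on the number of integrations and on nondegeneracy of the Hessian, so a dimension- and structure-blind co-area count cannot yield the conjectured caps in general. Third, your assertion that the high--low power counting "is tight by the very definition of $s_0$" is circular: $s_0$ is defined abstractly via analytic well-posedness of the flow, and nothing guarantees that the frequency-restricted estimate saturates exactly at that regularity (the \ref{4kdv} case, where the multilinear estimate fails while the conjecture may still hold, shows the two notions can come apart).

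In short, your proposal is a reasonable account of \emph{why} the conjecture has the shape it has, and it matches the paper's own motivation, but it is not a proof, and the paper offers none; the gaps you list at the end (degenerate strata of $\Phi$, equation-specific case analysis, spurious intermediate thresholds) are exactly the reasons the statement remains open, so they cannot be deferred as secondary difficulties — they are the content of the problem.
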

\begin{nb}
	It is well known, for example with the well-studied example of the KdV equation, that the optimal regularity at which local well-posedness
	can be obtained by the Picard iteration scheme \cite{KPV6, kishi} can be significantly higher than the expected
	one from heuristic scaling arguments due to breakdown of the \textit{analytic} dependence of the data to flow map and a corresponding
	type of ill-posedness below that Picard threshold (\cite{CCT1, KPV4}). However, one can see in \cite{HKV, KiVi, MST}, that the range of regularity for  mere \textit{existence} of the flow can be extended below that for which it is \textit{analytic}, with more severe
	ill-posedness conditions at lower levels of regularity as for example in \cite{mol}, where it is proved that the data to flow map becomes discontinuous. It is an open problem whether nonlinear smoothing is possible for non-analytic flows, in that intermediate range.
\end{nb}
\begin{nb}
When $k=2$, the conjecture is false. Indeed, Isaza, Mejía and Tzevtkov \cite{IMT} have proven that the Korteweg-de Vries equation does not exhibit nonlinear smoothing in terms of $H^s$-spaces (see also \cite{CS, LPS} for results in weighted spaces). As we will see later (Remark \ref{nota:k=2}), the quadratic nonlinearities pose severe obstacles to our strategy, in agreement with the cited literature.
\end{nb}
\begin{nb}
We do not claim that the bound on $\epsilon$ is optimal. On the contrary, we expect that specific nonlinearities may avoid the problematic resonances and give rise to improved gains in regularity (see, for example, the gauge-transformed derivative nonlinear Schrödinger equation in \cite{CS}).
\end{nb}

\begin{nb}
	For the quartic Korteweg-de Vries equation, there is a gap betweem Theorem 5 and Conjecture 1. The bound $\epsilon<s+1/2$ is related to some specific frequency interactions in the corresponding multilinear estimate. However, while the multilinear estimate does fail, the nonlinear estimate might still hold.
\end{nb}

\subsection{Methodology of the proofs}

As outlined in \eqref{bourgain:smooth},  Theorems 1-5 will follow from the corresponding multilinear estimates in Bourgain spaces $X^{s,b}=X^{s,b}_{\tau=\phi(\xi)}$. Write a general  nonlinearity $N(u)$ in  multilinear form, as $N(u)=N[u, \ldots, u]$ where\footnote{To make the presentation simpler in this introduction, we are assuming here that conjugates of $u$ do not occur in the nonlinear term $N(u)$. However,
Bourgain spaces are not invariant under conjugation, and the appearance of terms in $\bar{u}$ in the nonlinearity needs to be carefully considered, which we will do once the
technical details of the specific proofs are presented in Section \ref{multsection}.}

\begin{equation}\label{multilinearform}
	\widehat{N[u_1,\ldots,u_k]}(\xi)=\int_{\xi=\xi_1+\cdots+\xi_k}m(\Xi)\widehat{u_1}(\xi_1) \cdots \widehat{u_k}(\xi_1),
\end{equation}
with $k$ the order of the nonlinearity, $\Xi=(\xi,\xi_1,\ldots,\xi_k)$ and $m(\Xi)$ a Fourier multiplier that represents the
possible derivatives ocurring in $N(u)$.
It suffices to prove that, for $b=(1/2)^+$, $s'=s+\epsilon$ and $b'=(b-1)^+$,
\begin{equation}\label{eq:multibourgain}
	\|N[u_1,\dots,u_k]\|_{X^{s',b'}} \lesssim \prod_{j=1}^k\|u_j\|_{X^{s,b}}.
\end{equation}
Using duality, \eqref{eq:multibourgain} becomes, for all $u\in X^{-s',-b'}$ and all $u_j\in X^{s,b}$,
$$\Big|\int_{(x,t)}N[u_1,\dots,u_k]\bar{u} dxdt\Big| \lesssim \|u\|_{X^{-s',-b'}}\prod_{j=1}^k\|u_j\|_{X^{s,b}}.$$
Applying Plancherel, noticing that $\widehat{N[u_1,\dots,u_k]}=m(\Xi)\hat{u_1}*\dots*\hat{u_k}$ and putting 
$$\hat u=\langle\tau-\phi(\xi)\rangle^{b'}\langle\xi\rangle^{s'}\overline{v},\quad \hat u_j=\langle\tau_j-\phi(\xi_j)\rangle^{-b}\langle\xi_j\rangle^{-s}v_j,$$
we obtain, for all $v_j,v\in L^2$,
\begin{equation}\label{eq:duality}
	\left|\int_{(\tau,\xi)=\sum_{j=1}^k (\tau_j,\xi_j)}\frac{m(\Xi)\jap{\xi}^{s'}\jap{\tau-\phi(\xi)}^{b'}}{\prod_{j=1}^k\jap{\xi_j}^s\jap{\tau_j-\phi(\xi_j)}^b}v\prod_{j=1}^{k}v_jd\xi_1\dots d \xi_{k}d\tau_1\dots d\tau_k\right|\lesssim \|v\|_{L^2}\prod_{j=1}^k\|v_j\|_{L^2}.
\end{equation}
The structured derivation of multilinear estimates of the form \eqref{eq:duality} has been the subject of intense research in the last twenty years. With this general
perspective to proving multilinear estimates, one of the main references in the field is the work of Tao \cite{tao_weightedL2}, where a generic procedure to tackle this problem is presented, either through $L^2$ estimates of the multiplier or through a more sophisticated version of Schur's lemma. All these arguments ultimately  rely on a very specific knowledge of the phase function (in particular, on the structure of its zero set, called the \textit{resonant} set, and of its critical point set, the \textit{coherent} set). However, as the resonant set can become quite intricate for large dimensions and/or higher-order nonlinearities, the method must then be adapted \textit{ad hoc} to each specific case. In this respect, Tao even points out in \cite[pg. 16]{tao_weightedL2},  ``$X^{s,b}$ \textit{norms are better at controlling the effects of resonance,
whereas physical space norms (such as mixed Lebesgue norms) are better at controlling the effect of coherence. (...)  It is not clear at present what the best way to combine
these two types of norms is, or whether completely new norms are needed.}'' Despite this observation, we are able to prove our 
results exclusively within the framework of Bourgain spaces, which might be due to the fact that the Strichartz and mixed Lebesgue norm estimates
are just another facet of the dispersive effects, already encoded in them.

The analysis of dispersive equations based on resonant and coherent frequencies can also be seen in the \textit{space-time resonances} method of Germain, Masmoudi and Shatah, first introduced in \cite{GMS}. There, time resonances correspond to Tao's resonant frequencies and space resonances correspond to coherence. Their arguments work particularly well for quadratic nonlinearities, where, once again, one can understand precisely the structure of the problematic frequencies. Generally speaking, for nonresonances in time, the improvement in the estimates comes from a normal form reduction (more specifically, an integration by parts in time). On the other hand, for nonresonances in space, the authors use vector-fields, as introduced by Klainerman in \cite{klainerman} (which is related to an integration by parts in frequency). Notice the difference between the two arguments: while integration by parts in time uses the equation itself (something which is intrinsic to the evolution problem), the integration by parts in frequency requires extra information from the solution. 

In the present work, we propose a systematic method to derive multilinear estimates, in which one can see the ingredients present in \cite{GMS} or \cite{tao_weightedL2} arising in a natural way. The first step is to reduce the problem to an estimate on the corresponding multiplier, in the spirit of \cite{tao_weightedL2}, using either Cauchy-Schwarz or an analogue of the Schur's lemma. The idea is to reduce \eqref{eq:duality} to an estimate \textit{only in the spatial frequencies}, while keeping the oscillatory information. Let 
$$\mathcal{M}:=\frac{m(\Xi)\jap{\xi}^{s'}}{\prod_{j=1}^k\jap{\xi_j}^s}
$$ 
and $\Phi$ the (total) phase function (see \eqref{eq:phase}). In Lemmas \ref{lem:CS} and \ref{lem:interpol}, we prove that the multilinear estimate holds if either:
\begin{itemize}
	\item (Cauchy-Schwarz) there exists $\beta<1/2$ such that 
\begin{equation}\label{eq:cauchy}
		\sup_{\alpha,\xi} \left(\int_{\xi_1+\dots +\xi_k=\xi} \mathcal{M}^2\fia d\xi_1\dots d\xi_{k-1} \right)^{\frac{1}{2}} \lesssim M^\beta ,\quad M>1.
\end{equation}
	\item (Schur's test) for some $1\le j<k$ and $\beta<1$,
\begin{multline}\label{eq:schur}
		\sup_{\alpha, \xi,\xi_1,\dots,\xi_{j}} \left(\int_{\xi_1+\dots +\xi_k=\xi} \mathcal{M}\fia d\xi_{j+1}\dots d\xi_{k-1}\right)\\  + \sup_{\alpha, \xi_{j+1},\dots,\xi_k} \left(\int_{\xi_1+\dots +\xi_k=\xi} \mathcal{M}\fia d\xi_1\dots d\xi_{j} \right) \lesssim M^\beta,\quad  M>1.
\end{multline}
\end{itemize} 
These \textit{frequency-restricted} estimates can be further generalized: for example, one can consider different multipliers in \eqref{eq:schur}, as long as their geometric mean is $\mathcal{M}$ (see Section 2).

\begin{nb}
	Estimates \eqref{eq:cauchy} and \eqref{eq:schur} can be viewed as \textit{sublevel estimates} for the phase function. The parallel between oscillatory integrals and sublevel estimates (which is basically a multidimensional generalization of Van der Corput's lemma) has been studied in \cite{CCW} (see also \cite{Ruixiang}). Even though the passage from a multilinear Bourgain estimate to a frequency-restricted estimate can be morally viewed as a manifestation of this parallel, it is not clear how to include frequency-restricted estimates in the existing literature of sublevel estimates. 
\end{nb}

The Cauchy-Schwarz frequency-restricted estimate has already appeared in the derivation of multilinear estimates (see, for example, \cite[Lemma 3.9]{tao_weightedL2}) and in the context of the infinite normal form reduction (\cite{koy, mosincatyoon}). The difficulty with the Cauchy-Schwarz argument is that it imposes $\beta<1/2$, while \textit{the scaling-critical regularity corresponds to} $\beta=1$. In several cases, such as the cubic NLS or the modified KdV on $\R$, this is already enough to reach sharp local well-posedness and optimal nonlinear smoothing results \cite{CS}. However, for the equations considered in this work, \eqref{eq:cauchy} is far from being optimal and more refined ideas are necessary.

Let us motivate the frequency-restricted estimate \eqref{eq:schur}. In order to reach the scaling-critical case $\beta=1$, we must reduce \eqref{eq:duality} to an estimate on the multiplier $\mathcal{M}$  losing almost nothing in the process (this is obviously not what happens using Cauchy-Schwarz). Our idea can be traced back to Schur's test, which gives conditions for which
\begin{equation}\label{eq:schurlemma}
	\left|\int K(x,y)f(x)g(y)dxdy \right|\lesssim_K \|f\|_{L^2}\|g\|_{L^2}. 
\end{equation}
If one applies Cauchy-Schwarz, we see that the estimate holds when $K$ is a Hilbert-Schmidt kernel
$$
\left(\int |K(x,y)|^2 dxdy\right)^{\frac{1}{2}}<+\infty.
$$
which is the analogous condition to \eqref{eq:cauchy}. On the other hand, Schur's lemma states that \eqref{eq:schurlemma} holds under the conditions
$$
\sup_x \int |K(x,y)| dy < +\infty\qquad\textrm{ and }\qquad \sup_y \int |K(x,y)| dx < +\infty.
$$
There are several proofs for Schur's lemma. One of them is based on a simple interpolation argument between $L^1$ and $L^\infty$. Indeed, if $f\in L^1$ and $g\in L^\infty$,
\begin{align}
	\left|\int K(x,y)f(x)g(y)dxdy \right|&\lesssim \left(\int \left(\int|K(x,y)|dy\right)|f(x)|dx \right)\|g\|_{L^\infty}\\&\lesssim \sup_x\left( \int|K(x,y)|dy\right)\|f\|_{L^1}\|g\|_{L^\infty}.\label{eq:schurproof}
\end{align}
By symmetry, an analogous estimate holds for $f\in L^\infty$ and $g\in L^1$, and the result follows by interpolation. The reason why we present the proof of such a well-known result is to highlight the \textit{almost optimality} of \eqref{eq:schurproof}, passing from a bilinear estimate to a multiplier estimate. In Section 2, we apply similar arguments to reduce the multilinear estimate \eqref{eq:duality} to the frequency-restricted estimate \eqref{eq:schur}.

After having reduced our problem, let us now explain the guiding principles that we apply in this work to prove the necessary frequency-restricted estimates. To fix some ideas, let us assume that $d=1$, $k=3$, $\Phi$ is homogeneous of degree $l$ and that $|\xi|\ge |\xi_1|,|\xi_2|, |\xi_3|$. Considering  $j=1$, we exemplify how the first integral in the left-hand-side of \eqref{eq:schur},
$$
\sup_{\alpha,\xi,\xi_1} \int_{\xi_1+\xi_2+\xi_3=\xi} \mathcal{M}\fia d\xi_2,
$$
can be  bounded. We split the study into two cases:
\begin{itemize}
	\item $|\partial_{\xi_2} \Phi|\gtrsim |\xi|^{l-1}$. Then one may perform the change of variables $\xi_2\mapsto \Phi$:
\begin{align*}
		\sup_{\alpha,\xi,\xi_1} \int \mathcal{M}(\xi,\xi_1,\xi_2)\fia d\xi_2&\lesssim \sup_{\alpha,\xi,\xi_1} \int \left(\sup_{\xi_2} \mathcal{M}(\xi,\xi_1,\xi_2)\right)\fia \frac{1}{|\xi|^{l-1}}d\Phi\\&\leq 2\left(\sup_{\alpha,\xi,\xi_1,\xi_2}\frac{\mathcal{M}(\xi,\xi_1,\xi_2)}{|\xi|^{l-1}}\right)M.
\end{align*}
\item $|\partial_{\xi_2}\Phi|\ll |\xi|^{l-1}$. Writing
$$
\Phi(\xi,\xi_1,\xi_2,\xi_3) = \xi^lP(p_1,p_2,p_3)=\xi^lP(\mathbf{p}),\quad p_j=\frac{\xi_j}{\xi},\ j=1,2,3,
$$
where $P(p_1,p_2,p_3)=\Phi(1,p_1,p_2,p_3)$, the stationarity condition will usually mean we are close to some explicit stationary point $\mathbf{p}^0$. When dealing with this case, we will use two general facts. First, if the components of $\mathbf{p}^0$ are nonzero, then $\xi_j$ will be comparable to $\xi$, leading to a large simplification in $\mathcal{M}$. Second, as $|\mathbf{p}-\mathbf{p}^0|\ll1$, we can apply Morse's lemma to extract the precise nature of the restriction $|\Phi-\alpha|<M$.
\end{itemize}
In Section 3. these principles can be observed (with various degrees of complexity) in the proof of \textit{every single frequency-restricted estimate}. Moreover, these ideas were already present in \cite{CS} and \cite{C-BO}, where the modified KdV, the 1D cubic NLS, the derivative NLS and the Benjamin-Ono equation were analyzed. In conjunction with the present work, we see the successful application of this methodology to \textit{nine prototypical dispersive equations}, with various spatial dimensions, dispersions and nonlinearities.
 We invite the reader to compare the complexity between the proofs in the literature and our arguments.

\begin{nb}\label{nota:espaco}
	Another feature of our approach is that it helps understanding the reason why lower order nonlinearities and/or lower dimensions can pose obstacles to the local well-posedness theory up to the scaling-critical exponent. Speaking loosely, as the dimension of the total frequency space $(\R^d)^k\simeq \R^{dk}$ decreases, the less \textit{room} we have to perform integrations and apply Morse's lemma. A symptom of this lack of room  can be seen in the sublevel estimates for quadratic polynomials in one and two dimensions (Lemma \ref{lem:quadraticas}),
	\begin{equation}
		 \int \mathbbm{1}_{|p^2-\alpha|<M}dp\lesssim M^{1/2}\quad\mbox{and}\quad \int_{|p|,|q|<N} \mathbbm{1}_{|p^2\pm q^2-\alpha|<M}dpdq\lesssim M^{1^-}N^{0^+},\quad M>1.
	\end{equation}
While, in two dimensions, one is able to reach $\beta=1$, the one-dimensional case cannot go above $\beta=1/2$, resulting in a considerable gap from the scaling-critical regularity.
\end{nb}

\begin{nb}\label{nota:k=2}
	When the nonlinearity is quadratic, the lack of room is even more dire.  This impediment can be seen in the frequency restricted estimate \eqref{eq:schur}. Indeed, when $k=2$ and $j=1$ we get
	$$
	\sup_{\alpha,\xi,\xi_1} \int_{\xi_2=\xi-\xi_1} \mathcal{M}\fia = 	\sup_{\alpha,\xi,\xi_1}\mathcal{M}(\xi,\xi_1,\xi-\xi_1)\fia = \sup_{ \xi,\xi_1}\mathcal{M}(\xi,\xi_1,\xi-\xi_1).
	$$
	The lack of integration completely kills the oscillatory information encoded in $\fia$ and the resulting estimate will not be optimal. This observation agrees with the literature: it is well-known that \eqref{bourgain:mult} may fail and slight modifications of the classic Bourgain spaces may be necessary (as, for example, in \cite{Tao-Bejenaru}, \cite{hadac} or \cite{kino}).
\end{nb}

\vskip10pt
The rest of this work is organized as follows. In Section 2, we reduce the  mulitlinear estimate \eqref{eq:multibourgain} to various frequency-restricted estimates. Then, in Section 3, we prove the frequency-restricted estimates for the modified Zakharov-Kuznetsov equation in two and higher dimensions (Subsection 3.1), the cubic and quintic NLS (Subsection 3.2) and finally the quartic Korteweg-de Vries (Subsection 3.3).

\vskip10pt
\textbf{Notation.} The Fourier transform of $u$ in the spatial variable will be denoted by $\hat{u}$ and the space-time Fourier transform by $\mathcal{F}_{t,x}u$. Given $x\in \R$, $x^+$ (resp. $x^-$) denotes any number sufficiently close to $x$ which is greater (resp. smaller) then $x$. In two  (resp. three) dimensions, we will write the frequency $\xi$ as $(x,y)$ (resp. $(x,y,z)$). Given $a,b\ge 0$, $a\lesssim b$ means there exists a (universal) constant $C>0$ such that $a\le Cb$. If $a\lesssim b \lesssim a$, we write $a\sim b$. If there exists a small $\delta>0$ such that $a\le \delta b$, we say that $a\ll b$. Finally, if $\xi_1,\xi_2\in \R^d$ satisfy $|\xi_1-\xi_2|\ll \max\{|\xi_1|,|\xi_2|\}$, we say that $\xi_1\simeq \xi_2$.
\vskip10pt 
\noindent \textbf{Acknowledgements.} S. Correia and J. Silva were partially supported by Funda\c{c}\~ao para a Ci\^encia e Tecnologia, through CAMGSD, IST-ID
(projects UIDB/04459/2020 and UIDP/04459/2020). F. Oliveira was partially supported by Funda\c{c}\~ao para a Ci\^encia e Tecnologia, through CEMAPRE (project UIDB/05069/2020). The authors were also supported through the FCT project NoDES (PTDC/
MAT-PUR/1788/2020).


\section{Reduction of multilinear estimates to frequency-restricted estimates}\label{multsection}

In this section, we consider  a general $k$-multilinear term, now including the possible appearance of conjugates, of the form
\begin{align*}
\widehat{N[u_1,\dots,u_k]}(\xi)&=\int_{\xi=\xi_1+\dots+\xi_k}m(\Xi)\left(\prod_{j=1}^{k_0}\hat{u}_j(\xi_j)\right)\left(\prod_{j=k_0+1}^{k}\widehat{\overline{u_j}}(\xi_j) \right)\\&=\int_{\xi=\xi_1+\dots+\xi_k}m(\Xi)\left(\prod_{j=1}^{k_0}\hat{u}_j(\xi_j)\right)\left(\prod_{j=k_0+1}^{k}\overline{\widehat{{u_j}}}(-\xi_j) \right)\\&=\int_{\xi=\xi_1+\dots+\xi_{k_0}-\xi_{k_0+1}\dots-\xi_k}m(\xi_1,\dots,\xi_k)\left(\prod_{j=1}^{k_0}\hat{u}_j(\xi_j)\right)\left(\prod_{j=k_0+1}^{k}\overline{\widehat{{u_j}}}(\xi_j) \right),
\end{align*}
where we used the relation $\xi=\xi_1+\dots+\xi_{k_0}-\xi_{k_0+1}\dots-\xi_k$ to write $m(\Xi)$ in terms of $\xi_1,\dots, \xi_k$ only.
We define the corresponding phase function as
\begin{equation}\label{eq:phase}
	\Phi(\xi_1,\dots,\xi_k):=\phi(\xi) - \sum_{j=1}^{k_0} \phi(\xi_j) + \sum_{j=k_0+1}^{k} \phi(\xi_j)
\end{equation}
and the convolution hyperplanes
$$
\Gamma_\xi = \left\{ (\xi_1,\dots,\xi_k)\in (\R^d)^k: \xi=\sum_{j=1}^{k_0} \xi_j - \sum_{j=k_0+1}^{k} \xi_j \right\},$$
$$\Gamma_\tau = \left\{ (\tau_1,\dots,\tau_k)\in \R^k: \tau=\sum_{j=1}^{k_0} \tau_j - \sum_{j=k_0+1}^{k} \tau_j \right\}.
$$
\begin{nb}
	In this work, we will be mainly interested in the Schrödinger Bourgain space $X_S^{s,b}$, where
	$$
	\phi(\xi)=\sum_{i=1}^d x_i^2,\quad \xi=(x_1,\dots,x_d),
	$$
	the Zakharov-Kuznetsov Bourgain space $X_{ZK}^{s,b}$, with
	\begin{equation}\label{phaseZK}
	\phi(\xi)=x_1\sum_{i=1}^d x_i^2,\quad \xi=(x_1,\dots,x_d), \ d\ge 2,
	\end{equation}
	and the Korteweg-de Vries Bourgain space $X_{KdV}^{s,b}$, where
	$$
	\phi(\xi)=\xi^3.
	$$
\end{nb}
Given $s,s'\in\R$ and $-1/2<b-1<b'<0$, the next lemmas give sufficient conditions for the validity of the estimate
\begin{equation}\label{eq:multilinear}
\|N[u_1,\dots,u_k]\|_{X^{s',b'}} \lesssim \prod_{j=1}^k\|u_j\|_{X^{s,b}}.
\end{equation}

\begin{lem}\label{lem:CS}
	Suppose that there exists $\beta<-b'$ such that, for any $M>1$,
	\begin{equation}
		\sup_{\xi} \int_{\Gamma_\xi}\frac{|m( \xi_1,\dots,\xi_k )|^2\jap{\xi}^{2s'}}{\prod_{j=1}^k\jap{\xi_j}^{2s}}\fiM d\xi_1\dots d \xi_{k-1}\lesssim M^{2\beta}.
	\end{equation}
	Then \eqref{eq:multilinear} holds. 
\end{lem}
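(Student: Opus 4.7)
My plan is to follow the standard reduction of multilinear $X^{s,b}$ estimates to a sublevel-set estimate on the symbol, via duality, a double Cauchy--Schwarz, and a layer-cake decomposition.

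First I would dualize as in \eqref{eq:multibourgain}--\eqref{eq:duality}: setting $\hat u_j = \jap{\sigma_j}^{-b}\jap{\xi_j}^{-s}v_j$ and $\hat u = \jap{\sigma}^{b'}\jap{\xi}^{s'}\bar v$, with $\sigma=\tau-\phi(\xi)$ and $\sigma_j=\tau_j-\phi(\xi_j)$, so that \eqref{eq:multilinear} becomes
$$\left|\int_{\Gamma_\xi\times\Gamma_\tau}K\, v\prod_{j=1}^k v_j\right|\lesssim \|v\|_{L^2}\prod_{j=1}^k \|v_j\|_{L^2},\qquad K = \frac{m(\xi_1,\dots,\xi_k)\jap{\xi}^{s'}\jap{\sigma}^{b'}}{\prod_j\jap{\xi_j}^s\jap{\sigma_j}^b}.$$
I would then apply Cauchy--Schwarz first in the outer variable $(\tau,\xi)$, peeling off $\|v\|_{L^2}$, and a second Cauchy--Schwarz inside, in the $k-1$ free modulation/frequency pairs (the $k$-th pair being determined by the convolution constraints, and later freed by a change of variables that frees $(\sigma_k,\xi_k)$ in place of $(\tau,\xi)$). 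This reduces the matter to the weighted $L^2$ bound on the symbol,
$$\sup_{\sigma,\xi}\jap{\sigma}^{2b'}\jap{\xi}^{2s'}\int \frac{|m|^2\,d\sigma_1\, d\xi_1\cdots d\sigma_{k-1}\, d\xi_{k-1}}{\prod_j\jap{\xi_j}^{2s}\jap{\sigma_j}^{2b}} \lesssim 1,$$
subject to $\sum\pm\xi_j=\xi$ and $\sum\pm\sigma_j=\sigma+\Phi(\xi_1,\dots,\xi_k)$.

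The heart of the argument is then to integrate out the modulations and close the estimate against the hypothesized frequency-restricted bound. Since $b>1/2$, iterating the classical one-dimensional calculus lemma $\int d\sigma\,\jap{\sigma}^{-2b}\jap{\sigma-a}^{-2b}\lesssim \jap{a}^{-2b}$ along the linear relation on the $\sigma_j$'s yields
$$\int\prod_{j=1}^k \jap{\sigma_j}^{-2b}\,d\sigma_1\cdots d\sigma_{k-1}\lesssim \jap{\sigma+\Phi}^{-2b}.$$
I would then use the layer-cake representation
$$\jap{\sigma+\Phi}^{-2b}\sim \int_1^\infty M^{-2b-1}\mathbbm{1}_{|\sigma+\Phi|<M}\,dM$$
and invoke the hypothesis with $\alpha=-\sigma$ (so that $\mathbbm{1}_{|\sigma+\Phi|<M}=\mathbbm{1}_{|\Phi-\alpha|<M}$) to bound the inner $\xi$-integral by $M^{2\beta}$. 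The problem thus collapses to the convergence of the residual $M$-integral $\int_1^\infty M^{2\beta-2b-1}\,dM$, which holds under the assumption $\beta<-b'$ (since $b+b'>0$ gives $-b'<b$); the outer factor $\jap{\sigma}^{2b'}\leq 1$ takes care of the supremum.

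The hardest part is the exponent bookkeeping in this final step: tracking how the decay from the iterated calculus lemma combines with $\jap{\sigma}^{2b'}$ and the layer-cake cutoff so that the residual $M$-integral converges precisely under the stated sharp condition $\beta<-b'$. Everything else is a routine, by now textbook, Cauchy--Schwarz/duality manipulation in Bourgain spaces.
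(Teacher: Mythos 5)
There is a genuine gap at the final step. The hypothesis of this lemma is the \emph{centered} estimate, with the indicator $\fiM=\mathbbm{1}_{|\Phi|<M}$ and a supremum over $\xi$ only; unlike \eqref{eq:cauchy} in the introduction or Lemma \ref{lem:interpol}, there is no free parameter $\alpha$. Your layer-cake step produces $\mathbbm{1}_{|\sigma+\Phi|<M}$ and you then ``invoke the hypothesis with $\alpha=-\sigma$'', i.e.\ you need the sublevel bound uniformly over all shifts $\alpha\in\R$ of the phase. That is a strictly stronger assumption than the one stated, so as written the argument does not prove this lemma. The source of the problem is that you discard the factor $\jap{\sigma}^{2b'}\le 1$: once it is thrown away, nothing recenters the sublevel sets at $0$. (A symptom: your residual integral $\int_1^\infty M^{2\beta-2b-1}\,dM$ converges under $\beta<b$, not ``precisely'' under $\beta<-b'$ --- the threshold $-b'$ never actually enters your bookkeeping, which signals that the $\jap{\sigma}^{2b'}$ weight was not used where it must be.)

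The paper's proof closes exactly this point: after integrating out the modulations with the calculus lemma to get $\jap{\tau-\phi(\xi)+\Phi}^{-2b}$, it uses $\jap{a_1+a_2}\lesssim\jap{a_1}\jap{a_2}$ to absorb the shift, namely $\jap{\tau-\phi(\xi)}^{2b'}\jap{\tau-\phi(\xi)+\Phi}^{-2b}\lesssim\jap{\Phi}^{2b'}$ (valid since $-2b<2b'<0$), so that only the weight $\jap{\Phi}^{2b'}$, centered at zero, remains. A dyadic decomposition in $|\Phi|\sim M$ then matches the hypothesis with $\fiM$ directly, and (after the square root coming from Cauchy--Schwarz) the dyadic sum $\sum_M M^{\beta+b'}$ converges exactly when $\beta<-b'$. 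Your argument up to and including the modulation integration is the same as the paper's; to repair it, keep $\jap{\sigma}^{2b'}$, trade it against $\jap{\sigma+\Phi}^{-2b}$ as above, and only then decompose in $|\Phi|$.
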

\begin{nb}
	The result is also valid if one switches $\xi$ with one of the $\xi_j$.
\end{nb}
\begin{proof}
	Arguing by duality as in \eqref{eq:duality}, but now considering the conjugates of $u$ in the nonlinearity, it suffices to show that
	\begin{equation*}
	I=\left|\int_{\Gamma_\xi,\Gamma_\tau}\frac{m( \xi_1,\dots,\xi_k )\jap{\xi}^{s'}\jap{\tau-\phi(\xi)}^{b'}}{\prod_{j=1}^k\jap{\xi_j}^s\jap{\tau_j-\phi_j(\xi_j)}^b}v\prod_{j=1}^{k}v_jd\xi_1\dots d \xi_{k}d\tau_1\dots d\tau_k\right|
	\lesssim \|v\|_{L^2}\prod_{j=1}^k\|v_j\|_{L^2}.
	\end{equation*}
By Cauchy-Schwarz,
\begin{align*}
	I&\lesssim \left(\int_{\Gamma_\xi,\Gamma_\tau}\frac{|m( \xi_1,\dots,\xi_k )|^2\jap{\xi}^{2s'}\jap{\tau-\phi(\xi)}^{2b'}}{\prod_{j=1}^k\jap{\xi_j}^{2s}\jap{\tau_j-\phi_j(\xi_j)}^{2b}}|v|^2d\xi_1\dots d \xi_{k}d\tau_1\dots d\tau_k\right)^{\frac{1}{2}}\prod_{j=1}^k\|v_j\|_{L^2}\\&\lesssim\sup_{\tau,\xi} \left(\int_{\Gamma_\xi,\Gamma_\tau}\frac{|m( \xi_1,\dots,\xi_k )|^2\jap{\xi}^{2s'}\jap{\tau-\phi(\xi)}^{2b'}}{\prod_{j=1}^k\jap{\xi_j}^{2s}\jap{\tau_j-\phi_j(\xi_j)}^{2b}}d\xi_1\dots d \xi_{k-1}d\tau_1\dots d\tau_{k-1}\right)^{\frac{1}{2}}
	\|v\|_{L^2}\prod_{j=1}^k\|v_j\|_{L^2}.
\end{align*}
Since $2b>1$, integrating in $\tau_1$ to $\tau_{k-1}$ and using the fact that
$$\int \frac{1}{\jap{\tau-a_1}^{2b}\jap{\tau-a_2}^{2b}}d\tau\lesssim \frac{1}{\jap{a_1-a_2}^{2b}},$$
for any $a_1,a_2 \in \R$, in every one of those integrals, yields 
\begin{equation*}
\int_{\Gamma_\tau} \frac{1}{\prod_{j=1}^{k}\jap{\tau_j-\phi_j(\xi_j)}^{2b}}d\tau_1\dots  d\tau_{k-1}  \lesssim \frac{1}{\jap{\tau-\sum_{j=1}^{k_0}\phi(\xi_j) + \sum_{k_0+1}^k \phi(\xi_j)}^{2b}}= \frac{1}{\jap{\tau-\phi(\xi) + \Phi}^{2b}}.
\end{equation*}
and thus, using the simple estimate $\jap{a_1+a_2}\lesssim \jap{a_1}\jap{a_2}$, for any $a_1,a_2 \in \R$, we get
$$
\int_{\Gamma_\tau} \frac{\jap{\tau-\phi(\xi)}^{2b'}}{\prod_{j=1}^{k}\jap{\tau_j-\phi_j(\xi_j)}^{2b}} d\tau_1\dots  d\tau_{k-1} \lesssim \frac{\jap{\tau-\phi(\xi)}^{2b'}}{\jap{\tau-\phi(\xi) + \Phi}^{2b}} \lesssim \frac{1}{\jap{\Phi}^{-2b'}}.
$$
Performing a dyadic decomposition in $\Phi$,
\begin{align*}
	I&\lesssim \left(\int_{\Gamma_\xi}\frac{|m( \xi_1,\dots,\xi_k )|^2\jap{\xi}^{2s'}}{\jap{\Phi}^{-2b'}\prod_{j=1}^k\jap{\xi_j}^{2s}}d\xi_1\dots d \xi_{k}\right)^{\frac{1}{2}}\|v\|_{L^2}\prod_{j=1}^k\|v_j\|_{L^2} \\&\lesssim \sum_{M\ dyadic}\left(\int_{\Gamma_\xi}\frac{|m( \xi_1,\dots,\xi_k )|^2\jap{\xi}^{2s'}\fiM}{M^{-2b'}\prod_{j=1}^k\jap{\xi_j}^{2s}}d\xi_1\dots d \xi_{k}\right)^{\frac{1}{2}}\|v\|_{L^2}\prod_{j=1}^k\|v_j\|_{L^2} \\&\lesssim \sum_{M\ dyadic} \frac{M^{\beta}}{M^{-b'}}\|v\|_{L^2}\prod_{j=1}^k\|v_j\|_{L^2} \lesssim \|v\|_{L^2}\prod_{j=1}^k\|v_j\|_{L^2}.
\end{align*}
\end{proof}

In the next lemma, we write $\xi=\xi_0$ and $\tau=\tau_0$. Given a set of indices $A\subset \{0,\dots, k\}$, we abbreviate ``$\xi_j, \ j\in A$'' as ``$\xi_{j\in A}$''.
\begin{lem}\label{lem:interpol}
  Suppose that there exist $\emptyset\neq A \subsetneq \{0,\dots,k\}$, $\mathcal{M}_j=\mathcal{M}_j(\xi_1,\dots,\xi_k)\ge 0$, $j=1,2$, and  $\beta<-2b'$ such that
  $$
  \left(\mathcal{M}_1\mathcal{M}_2\right)^{\frac{1}{2}}=\frac{|m( \xi_1,\dots,\xi_k )|\jap{\xi}^{s'}}{\prod_{j=1}^k\jap{\xi_j}^{s}}
  $$
  and, for any $M>1$,
	\begin{equation}
		\sup_{\xi_{j\in A},\alpha} \int_{\Gamma_\xi}\mathcal{M}_1\fia d\xi_{j\notin A} + 	\sup_{\xi_{j\notin A},\alpha} \int_{\Gamma_\xi}\mathcal{M}_2\fia d\xi_{j\in A}  \lesssim M^{\beta}.
	\end{equation}
	Then \eqref{eq:multilinear} holds. 
\end{lem}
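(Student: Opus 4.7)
The strategy is to adapt the duality plus Cauchy--Schwarz scheme from the proof of Lemma \ref{lem:CS}, but to perform the Cauchy--Schwarz asymmetrically along the partition $\{A,A^c\}$; this implements the Schur-type interpolation sketched in the introduction. As in Lemma \ref{lem:CS}, I first dualize \eqref{eq:multilinear} to the bilinear-form estimate
$$
|I|:=\left|\int_{\Gamma_\xi\times\Gamma_\tau}\frac{m\,\jap{\xi}^{s'}\jap{\tau-\phi(\xi)}^{b'}}{\prod_{j=1}^k\jap{\xi_j}^s\jap{\tau_j-\phi(\xi_j)}^b}\,v\prod_{j=1}^k v_j\,d\xi d\tau\right|\lesssim \|v\|_{L^2}\prod_{j=1}^k\|v_j\|_{L^2}.
$$
Writing $\tilde v_0=v$ and $\tilde v_j=v_j$ for $j\geq 1$, the hypothesis lets me factor $|m|\jap{\xi}^{s'}/\prod\jap{\xi_j}^s$ as $(\mathcal{M}_1\mathcal{M}_2)^{1/2}$. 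I split the time weight as $W_1W_2=\jap{\tau-\phi(\xi)}^{b'}/\prod\jap{\tau_j-\phi(\xi_j)}^b$, placing $\jap{\tau_j-\phi(\xi_j)}^{-b}$ for $j\in A^c\cap\{1,\dots,k\}$ into $W_1$, together with $\jap{\tau-\phi(\xi)}^{b'}$ if $0\in A^c$, and sending the remaining factors into $W_2$.

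Next I apply Cauchy--Schwarz with $f=\mathcal{M}_1^{1/2}W_1\prod_{j\in A}|\tilde v_j|$ and $g=\mathcal{M}_2^{1/2}W_2\prod_{j\in A^c}|\tilde v_j|$, obtaining $|I|^2\le A_1A_2$ with $A_i=\int f_i^2\,d\xi d\tau$. By Fubini and $\|\prod_{j\in A}\tilde v_j\|_{L^2_{\xi_A,\tau_A}}=\prod_{j\in A}\|\tilde v_j\|_{L^2}$, the bound $A_1\lesssim\prod_{j\in A}\|\tilde v_j\|_{L^2}^2$ will follow from
$$
\sup_{\xi_A,\tau_A}\int_{(\Gamma_\xi\cap\Gamma_\tau)\text{-slice}}\mathcal{M}_1W_1^2\,d\xi_{A^c}d\tau_{A^c}\lesssim 1,
$$
and similarly for $A_2$. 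Passing to the modulations $\sigma_j=\tau_j-\phi(\xi_j)$, the $\Gamma_\tau$ constraint reads $\sigma_0=\sum_{j=1}^k\varepsilon_j\sigma_j-\Phi$ with $\varepsilon_j$ the convolution signs. Iterating the classical bound $\int\jap{\tau-a_1}^{-2b}\jap{\tau-a_2}^{-2b}d\tau\lesssim\jap{a_1-a_2}^{-2b}$ exactly as in Lemma \ref{lem:CS}, and, in the factor carrying the numerator $\jap{\sigma_0}^{2b'}$, combining with the auxiliary bound $\int\jap{x}^{2b'}\jap{x-C}^{-2b}dx\lesssim\jap{C}^{2b'}$ (valid since $b>1/2$ and $b+b'>0$), the $\tau$-integration produces an effective weight $\jap{\Phi+\alpha}^{2b'}$ in the factor carrying $\jap{\sigma_0}^{2b'}$ and $\jap{\Phi+\alpha}^{-2b}$ in the other, where $\alpha\in\R$ is an arbitrary shift coming from the modulations frozen under the supremum.

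The residual spatial task is to bound $\sup_{\xi_A,\alpha}\int_{\Gamma_\xi}\mathcal{M}_1\jap{\Phi+\alpha}^\gamma d\xi_{A^c}$ and its symmetric counterpart for $\mathcal{M}_2$, for $\gamma\in\{2b',-2b\}$. A dyadic decomposition $\jap{\Phi+\alpha}^\gamma\lesssim\sum_{M\text{ dyadic}}M^\gamma\mathbbm{1}_{|\Phi+\alpha|<M}$ together with the corresponding sup-integral assumption of the lemma yields the geometric series $\sum_M M^{\gamma+\beta}$, which converges in both cases because $\beta<-2b'$ and $-2b<2b'$ (the latter being equivalent to $b+b'>0$) force $\gamma+\beta<0$. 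Multiplying the square roots of $A_1$ and $A_2$ closes the argument. The main obstacle is the bookkeeping of the time-weight split in the two complementary cases $0\in A$ vs.\ $0\in A^c$, and verifying that in each case the single use of the $\Gamma_\tau$ constraint, together with the iterated modulation integrals, cleanly produces the $\jap{\Phi+\alpha}^\gamma$ factor needed to invoke the frequency-restricted hypothesis.
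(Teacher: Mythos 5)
Your proposal is correct, but it takes a genuinely different route from the paper. The paper splits the squared kernel into two multilinear forms $T_A$ and $T_{A^c}$, proves for each a Schur-type bound with the inputs taken in $L^1$ for one group of indices and in $L^\infty$ for the complementary group, and then recovers the all-$L^2$ estimate by interpolation between these two endpoints, the geometric-mean condition $(\mathcal{M}_1\mathcal{M}_2)^{1/2}=\mathcal{M}$ being exactly what makes the interpolated kernel coincide with the original one. You instead perform a single Cauchy--Schwarz with the \emph{crossed} splitting (functions of the $A$-group paired with the modulation weights of the $A^c$-group, and vice versa), which replaces the interpolation step by an elementary Fubini argument: each quadratic factor is reduced to a sup-integral in which the $\tau$-integration produces $\jap{\Phi-\alpha}^{2b'}$ on the side carrying $\jap{\tau_0-\phi(\xi_0)}^{2b'}$ and $\jap{\Phi-\alpha}^{-2b}$ on the other side, each of the two frequency-restricted hypotheses is invoked exactly once, and the dyadic sums close because $\beta<-2b'$ and $b+b'>0$. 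Your version buys self-containedness (no interpolation for a family of kernels), it shows that only the $2b'$-side needs the full strength $\beta<-2b'$ while the $-2b$-side merely needs $\beta<2b$, and the crossed assignment automatically keeps a $\jap{\Phi-\alpha}$ localization in \emph{both} factors (via the convolution constraint on one side, via the $\jap{\tau_0-\phi(\xi_0)}^{2b'}$ weight on the other); this is a genuine advantage, since in the paper's ``analogous'' bound for $T_{A^c}$, whose kernel carries no $\jap{\tau_0-\phi(\xi_0)}$ factor, the survival of such a localization is left implicit and requires extra care. What the paper's operator-plus-interpolation framework buys in exchange is flexibility, e.g.\ the refinement of Remark \ref{nota:M} (letting $\mathcal{M}_1,\mathcal{M}_2$ depend on the dyadic block) and the anisotropic variant in Lemma \ref{lem:interpol2}. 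When writing up the details, do keep two small points explicit: parametrize $\Gamma_\xi,\Gamma_\tau$ so that in each quadratic factor the variables carrying the $L^2$ functions are free (eliminating one variable from the complementary group, which is possible since both $A$ and $A^c$ are nonempty), and insist on the crossed weight split, since the uncrossed one (weights and functions attached to the same group) would leave one factor with a divergent $\tau$-integral.
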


\begin{proof}
	For the sake of clarity, suppose that $k_0=k$ and that $0\in A$. Consider the operator $T_A:(\mathcal{S}(\R\times \R^d))^{k+1}\to \R$ defined as
	$$
	T_A[v_0,\dots, v_k]= \int_{\Gamma_\xi, \Gamma_\tau} \mathcal{M}_1\frac{\jap{\tau_0-\phi(\xi_0)}^{2b'}}{\prod_{j\in A, j\ge 1}\jap{\tau_j-\phi(\xi_j)}^{2b}} v_0(\tau_0,\xi_0)\dots v_k(\tau_k,\xi_k) d\xi_1\dots d\xi_k d\tau_1\dots d\tau_k.
	$$
	Suppose that $v_{j\in A}\in L^1(\R\times \R^d)$ and $v_{j\notin A}\in L^\infty(\R\times \R^d)$, with unit norms. Then
\begin{align*}
		|T_A[v_0,\dots, v_k]|&\lesssim \int_{\Gamma_\xi, \Gamma_\tau} \mathcal{M}_1\frac{\jap{\tau_0-\phi(\xi_0)}^{2b'}}{\prod_{j\in A, j\ge 1}\jap{\tau_j-\phi(\xi_j)}^{2b}} \prod_{j\in A} |v_j(\tau_j,\xi_j)| d\xi_1\dots d\xi_k d\tau_1\dots d\tau_k\\&\lesssim \sup_{\tau_{j\in A}, \xi_{j\in A}} \int_{\Gamma_\xi, \Gamma_\tau} \mathcal{M}_1\frac{\jap{\tau_0-\phi(\xi_0)}^{2b'}}{\prod_{j\in A, j\ge 1}\jap{\tau_j-\phi(\xi_j)}^{2b}} d\xi_{j\notin A}d\tau_{j\notin A}.
\end{align*}
Integrating first in the $\tau$ variables,
$$
\int_{\Gamma_\tau} \frac{\jap{\tau_0-\phi(\xi_0)}^{2b'}}{\prod_{j\in A, j\ge 1}\jap{\tau_j-\phi(\xi_j)}^{2b}} d\tau_{j\notin A} \lesssim \jap{\Phi-\sum_{j\in A} (\tau_j - \phi(\xi_j))}^{2b'}
$$
and thus
\begin{align*}
	|T_A[v_0,\dots, v_k]|&\lesssim \sup_{\tau_{j\in A}, \xi_{j\in A}} \int_{\Gamma_\xi} \mathcal{M}_1\jap{\Phi-\sum_{j\in A} (\tau_j - \phi(\xi_j))}^{2b'} d\xi_{j\notin A}\\&\lesssim \sup_{ \xi_{j\in A}, \alpha\in\R} \int_{\Gamma_\xi} \mathcal{M}_1\jap{\Phi-\alpha}^{2b'} d\xi_{j\notin A}.
\end{align*}
Performing a dyadic decomposition in $\Phi-\alpha$,
\begin{align*}
		|T_A[v_0,\dots, v_k]|\lesssim \sup_{ \xi_{j\in A}, \alpha\in\R}\sum_{M\ dyadic} M^{2b'} \int_{\Gamma_\xi} \mathcal{M}_1\fia  d\xi_{j\notin A}\lesssim \sum_{M\ dyadic} M^{2b'+\beta} \lesssim 1.
\end{align*}
Therefore $T_A$ is a bounded operator for $v_{j\in A}\in L^1(\R\times \R^d)$ and $v_{j\notin A}\in L^\infty(\R\times \R^d)$. Analogously, $T_{A^c}$ defined as
	$$
T_{A^c}[v_0,\dots, v_k]= \int_{\Gamma_\xi, \Gamma_\tau} \mathcal{M}_2\frac{1}{\prod_{j\notin A}\jap{\tau_j-\phi(\xi_j)}^{2b}} v_0(\tau_0,\xi_0)\dots v_k(\tau_k,\xi_k) d\xi_1\dots d\xi_k d\tau_1\dots d\tau_k
$$
is a bounded operator for $v_{j\notin A}\in L^1(\R\times \R^d)$ and $v_{j\in A}\in L^\infty(\R\times \R^d)$. By interpolation, it follows that
$$\left|\int_{\Gamma_\xi,\Gamma_\tau}\frac{m(\xi_1,\dots,\xi_k)\jap{\xi}^{s'}\jap{\tau-\phi(\xi)}^{b'}}{\prod_{j=1}^k\jap{\xi_j}^s\jap{\tau_j-\phi(\xi_j)}^b}v\prod_{j=1}^{k}v_jd\xi_1\dots d \xi_{k}d\tau_1\dots d\tau_k\right|\lesssim \|v\|_{L^2}\prod_{j=1}^k\|v_j\|_{L^2},$$
which is the duality version of \eqref{eq:multilinear}.
\end{proof}
\begin{nb}\label{nota:M}
	Performing a careful inspection of the above proof, one can see that $\mathcal{M}_1, \mathcal{M}_2$ can also depend on the dyadic block $M$ (since $|\Phi-\alpha| \sim M$).
\end{nb}

The arguments in Lemmas \ref{lem:CS} and \ref{lem:interpol} can be combined in several ways to give other sufficient conditions for \eqref{eq:multilinear}. In this work, we will need the following anisotropic estimate:
\begin{lem}\label{lem:interpol2}
Fix $d=2$ and write $\xi=(x,y)$. Suppose that there exist $\emptyset\neq A \subsetneq \{0,\dots,k\}$ and  $\beta<-b'$ such that, for any $M>1$,
\begin{equation}
	\sup_{x,y_{j\in A},\alpha} \int_{\Gamma_y}\left(\int_{\Gamma_x}\frac{|m(\xi_1,\dots,\xi_k)|^2\jap{\xi}^{2s'}}{\prod_{j=1}^k\jap{\xi_j}^{2s}}\fiM dx_1\dots dx_{k-1} \right)^\frac{1}{2}dy_{j\notin A} \lesssim M^{\beta}
\end{equation}
and
\begin{equation}
	\sup_{x,y_{j\notin A},\alpha} \int_{\Gamma_y}\left(\int_{\Gamma_x}\frac{|m(\xi_1,\dots,\xi_k)|^2\jap{\xi}^{2s'}}{\prod_{j=1}^k\jap{\xi_j}^{2s}}\fiM dx_1\dots dx_{k-1} \right)^\frac{1}{2}dy_{j\in A}  \lesssim M^{\beta}.
\end{equation}
Then \eqref{eq:multilinear} holds.
\end{lem}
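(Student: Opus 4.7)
The plan is to combine the Cauchy--Schwarz argument of Lemma~\ref{lem:CS} applied in the $(\vec{x},\vec{\tau})$-variables with the Schur-type interpolation of Lemma~\ref{lem:interpol} applied in the $y$-variable. Since $d=2$, we decompose $\xi=(x,y)$; the two hypotheses then correspond exactly to the two endpoints of a multilinear complex interpolation in $y$ between the mixed-norm spaces $L^\infty_y(L^2_{x,\tau})$ and $L^1_y(L^2_{x,\tau})$, whose $\theta=1/2$ midpoint is $L^2_y(L^2_{x,\tau})=L^2_{\xi,\tau}$.

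By the duality argument of Lemma~\ref{lem:CS}, it suffices to bound
\begin{equation*}
I=\bigg|\int_{\Gamma_\xi,\Gamma_\tau}\frac{m\,\jap{\xi}^{s'}\jap{\tau-\phi(\xi)}^{b'}}{\prod_j\jap{\xi_j}^{s}\jap{\tau_j-\phi(\xi_j)}^{b}}\,v\prod v_j\bigg|\lesssim \|v\|_{L^2}\prod_j\|v_j\|_{L^2}.
\end{equation*}
For each dyadic block $|\Phi-\alpha|\sim M$, mimicking Lemma~\ref{lem:interpol} I would establish the endpoint estimate
\begin{equation*}
|T_A|\lesssim M^{b'+\beta}\prod_{j\in A}\|v_j\|_{L^1_{y_j}L^2_{x_j,\tau_j}}\prod_{j\notin A}\|v_j\|_{L^\infty_{y_j}L^2_{x_j,\tau_j}},
\end{equation*}
together with the symmetric version for $T_{A^c}$. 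Multilinear complex interpolation in the $y$-component then produces $|T|\lesssim M^{b'+\beta}\prod\|v_j\|_{L^2_{\xi,\tau}}$, and summing $\sum_M M^{b'+\beta}$ (convergent since $\beta<-b'$) closes the argument.

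To prove the endpoint estimate, pull out the $L^\infty_y$-norms of $v_{j\notin A}$ and, for each fixed $\vec y\in\Gamma_y$, apply the Cauchy--Schwarz machinery of Lemma~\ref{lem:CS} to the inner $(\vec x,\vec\tau)$-integral. Integrating out $\vec\tau$ as there and inserting the dyadic cutoff, one obtains
\begin{equation*}
|J(\vec y)|\lesssim M^{b'}\bigg(\sup_{x,\alpha}\int_{\Gamma_x}\frac{|m|^2\jap{\xi}^{2s'}}{\prod\jap{\xi_j}^{2s}}\mathbbm{1}_{|\Phi-\alpha|\lesssim M}\,dx_1\cdots dx_{k-1}\bigg)^{1/2}\prod_{j\in A}\|v_j(\cdot,y_j,\cdot)\|_{L^2_{x,\tau}}.
\end{equation*}
Treating the $v_{j\in A}$ as unit $L^1_{y_j}$-functions and integrating over $\vec y\in\Gamma_y$ in the Schur fashion of Lemma~\ref{lem:interpol}, the resulting $y$-integral matches exactly the first hypothesis of the lemma; the $T_{A^c}$ endpoint uses the second hypothesis analogously.

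The main technical subtlety lies in the placement of $\sup_x$ relative to the $y$-integration: the Cauchy--Schwarz step in $(\vec x,\vec\tau)$ naturally produces an inner $\sup_x$, while both hypotheses present it outside. This is handled by taking the $x$-supremum uniformly in $\vec y$ before applying the Schur interpolation, so that the estimate aligns with the structure of the hypothesis.
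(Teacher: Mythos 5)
Your argument is essentially the paper's own proof: apply Cauchy--Schwarz in the $(x,\tau)$-variables for fixed $y$-frequencies, bound the resulting kernel via the $\tau$-integration and a dyadic decomposition in $\Phi$ exactly as in Lemma~\ref{lem:CS}, place the $v_j$ in $L^1_yL^2_{\tau,x}$ for $j\in A$ and $L^\infty_yL^2_{\tau,x}$ for $j\notin A$ (and symmetrically for $A^c$), and conclude by interpolation in $y$ and summation of $M^{b'+\beta}$ using $\beta<-b'$. The $\sup_x$-placement subtlety you flag is handled no differently in the paper, whose proof likewise invokes the hypothesis with the supremum taken uniformly in the $y$-variables, so your proposal is correct and follows the same route.
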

\begin{proof}
	Suppose that $v_{j\in A}\in L^1_yL^2_{\tau,x}$ and $v_{j\notin A}\in L^\infty_yL^2_{\tau,x}$, with unit norms. Then, using Cauchy-Schwarz in the $\tau,x$ variables,
	\begin{align*}
		&\left|\int_{\Gamma_\xi,\Gamma_\tau}\frac{m\jap{\xi}^{s'}\jap{\tau-\phi(\xi)}^{b'}}{\prod_{j=1}^k\jap{\xi_j}^s\jap{\tau_j-\phi(\xi_j)}^b}v\prod_{j=1}^{k}v_jd\xi_1\dots d \xi_{k}d\tau_1\dots d\tau_k\right|\\ \lesssim & \int_{\Gamma_y}\sup_{\tau,x} \left(\int_{\Gamma_x,\Gamma_\tau}\frac{|m|^2\jap{\xi}^{2s'}\jap{\tau-\phi(\xi)}^{2b'}}{\prod_{j=1}^k\jap{\xi_j}^{2s}\jap{\tau_j-\phi(\xi_j)}^{2b}}dx_1\dots d x_{k-1}d\tau_1\dots d\tau_{k-1}\right)^{\frac{1}{2}}\prod_{j=0}^k\|v_j(y_j)\|_{L^2_{\tau,x}}dy_1\dots dy_k\\\lesssim &\sup_{y_{j\in A}}\int_{\Gamma_y}\sup_{\tau,x} \left(\int_{\Gamma_x,\Gamma_\tau}\frac{|m|^2\jap{\xi}^{2s'}\jap{\tau-\phi(\xi)}^{2b'}}{\prod_{j=1}^k\jap{\xi_j}^{2s}\jap{\tau_j-\phi(\xi_j)}^{2b}}dx_1\dots d x_{k-1}d\tau_1\dots d\tau_{k-1}\right)^{\frac{1}{2}}dy_{j\notin A}
	\end{align*}
Following the proof of Lemma \ref{lem:CS}, this quantity is bounded. Exchanging $A$ with $A^c$, the result now follows by interpolation.
\end{proof}

Finally, it is often sufficient to prove frequency-restricted estimates for lower dimensions and then extend them to the higher-dimensional case. To state precisely the result, let us consider $\R^d=\R^{d_1}\times \R^{d_2}$ and write an element of $\R^d$ as $\xi=(\eta,\zeta)$. Suppose that the phase function $\Phi$ may be decomposed as
$$
\Phi(\xi_1,\dots,\xi_k)=\Phi^\eta(\eta_1,\dots,\eta_k) + \Phi^\zeta(\zeta_1,\dots, \zeta_k)
$$
and that, for some $\mathcal{M}_1, \mathcal{M}_2:\R^{d_1}\to \R^+$,  $\emptyset\neq A \subsetneq \{0,\dots,k\}$ and $\beta<1$,
\begin{equation}\label{eq:reducedim}
	\sup_{\alpha,\eta_{j\in A}}\int_{\Gamma_\eta} \mathcal{M}_1\mathbbm{1}_{|\Phi^\eta-\alpha|<M}d\eta_{j\notin A} + 	\sup_{\alpha,\eta_{j\notin A}}\int_{\Gamma_\eta} \mathcal{M}_2\mathbbm{1}_{|\Phi^\eta-\alpha|<M}d\eta_{j\in A}\lesssim M^{\beta},\quad M>1.
\end{equation}
\begin{lem}\label{lem:descent}
	Let $\mathcal{K}_1,\mathcal{K}_2:\R^{d_1+d_2}\to \R^+$ be such that
	$$
	  \left(\mathcal{K}_1\mathcal{K}_2\right)^{\frac{1}{2}}=\frac{|m(\xi_1,\dots,\xi_k)|\jap{\xi}^{s'}}{\prod_{j=1}^k\jap{\xi_j}^{s}},
	$$
	$$
	\mathcal{K}_1(\xi_1,\dots,\xi_k)\lesssim \mathcal{M}_1(\eta_1,\dots, \eta_k)\max_{l\notin A}\left\{\prod_{j\notin A, j\neq l}\frac{1}{\jap{\zeta_j}^{d_2^+}}\right\}, 	
	$$
	and
	$$
	\mathcal{K}_2(\xi_1,\dots,\xi_k)\lesssim \mathcal{M}_2(\eta_1,\dots, \eta_k)\max_{l\in A}\left\{\prod_{j\in A, j\neq l}\frac{1}{\jap{\zeta_j}^{d_2^+}}\right\}.
	$$
	Then, if \eqref{eq:reducedim} holds for $\beta<-2b'$, then \eqref{eq:multilinear} also holds.
\end{lem}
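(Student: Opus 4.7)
The natural strategy is to apply Lemma \ref{lem:interpol} directly in $\R^d$ with the multipliers $\mathcal{K}_1, \mathcal{K}_2$, using the hypothesis \eqref{eq:reducedim} as a black box on the $\eta$-slices. To verify the hypothesis of Lemma \ref{lem:interpol} in the full variable $\xi=(\eta,\zeta)$, I will exploit the crucial fact that both the convolution hyperplane and the phase respect the product structure: $\Gamma_\xi = \Gamma_\eta \times \Gamma_\zeta$ and $\Phi = \Phi^\eta + \Phi^\zeta$. This means that after fixing all $\zeta_j$, the condition $|\Phi-\alpha|<M$ becomes $|\Phi^\eta - \alpha'|<M$ with the shifted parameter $\alpha' = \alpha - \Phi^\zeta$, which is precisely the form treated by \eqref{eq:reducedim}.

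For the $\mathcal{K}_1$-term, I fix $\xi_{j\in A}$ and $\alpha$, decompose the integral by Fubini placing the $\eta_{j\notin A}$ integration inside, and bound the max over $l\notin A$ in the hypothesis on $\mathcal{K}_1$ by a finite sum. For each fixed $l\notin A$, I parametrize $\Gamma_\zeta$ by choosing $\zeta_l$ as the determined variable, so that the free $\zeta$-variables are exactly $\zeta_{j\notin A,\, j\neq l}$ and each carries an integrable weight $\jap{\zeta_j}^{-d_2^+}$ on $\R^{d_2}$. The resulting estimate reads
\begin{equation*}
\int_{\Gamma_\xi}\mathcal{K}_1 \fia\, d\xi_{j\notin A}
\lesssim \int_{\Gamma_\zeta}\!\prod_{\substack{j\notin A\\ j\neq l}}\!\frac{d\zeta_j}{\jap{\zeta_j}^{d_2^+}}\left(\int_{\Gamma_\eta}\mathcal{M}_1\, \mathbbm{1}_{|\Phi^\eta-(\alpha-\Phi^\zeta)|<M}\, d\eta_{j\notin A}\right).
\end{equation*}
The inner bracket is uniformly $\lesssim M^\beta$ by \eqref{eq:reducedim} (applied with the shifted parameter $\alpha-\Phi^\zeta$, uniformly in $\zeta$), and the outer $\zeta$-integral converges by the choice of exponent $d_2^+ > d_2$, yielding a bound $\lesssim M^\beta$ as required.

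The symmetric argument, exchanging $A \leftrightarrow A^c$, $\mathcal{K}_1 \leftrightarrow \mathcal{K}_2$, and $\mathcal{M}_1 \leftrightarrow \mathcal{M}_2$, handles the second supremum in the hypothesis of Lemma \ref{lem:interpol}. Since the resulting $\beta$ is the same as in \eqref{eq:reducedim}, which is assumed to satisfy $\beta < -2b'$, an application of Lemma \ref{lem:interpol} then delivers the multilinear estimate \eqref{eq:multilinear}.

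The step that requires the most care is the bookkeeping of indices: specifically, matching the ``max over $l\notin A$'' in the decay hypothesis to the freedom of choosing which $\zeta$-variable is determined by the $\Gamma_\zeta$-constraint. This is exactly the role played by the max: no single $\zeta_j$ decays uniformly (the determined one is algebraically constrained by the others), but one may always pick $l$ to coincide with the dependent index, so that the remaining variables have genuine integrable decay. Apart from this, the proof is essentially Fubini together with a translation of the indicator parameter $\alpha$, so no further analytic input is needed beyond what is already encoded in \eqref{eq:reducedim} and Lemma \ref{lem:interpol}.
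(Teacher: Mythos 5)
Your proposal is correct and follows essentially the same route as the paper: Fubini over the product structure $\Gamma_\xi=\Gamma_\eta\times\Gamma_\zeta$, absorbing $\Phi^\zeta$ into a shifted parameter $\tilde\alpha$ so that \eqref{eq:reducedim} applies uniformly to the inner $\eta$-integral, integrating the $\jap{\zeta_j}^{-d_2^+}$ weights over $\Gamma_\zeta$, and then invoking Lemma \ref{lem:interpol}. Your explicit handling of the $\max_{l}$ (summing over $l$ and taking $\zeta_l$ as the variable determined by the convolution constraint) is exactly the bookkeeping the paper leaves implicit.
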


\begin{proof}
	It suffices to apply Lemma \ref{lem:interpol} to $\mathcal{K}_1$ and $\mathcal{K}_2$. Indeed, for $\alpha\in\R$ fixed,
	\begin{align*}
		\sup_{\xi_{j\in A}} \int_{\Gamma_\xi} \mathcal{K}_1\mathbbm{1}_{|\Phi^\eta + \Phi^\zeta-\alpha|<M}d\xi_{j\notin A} &\lesssim \sup_{\xi_{j\in A}} \int_{\Gamma_\xi} \mathcal{M}_1\max_{l\notin A}\left\{\prod_{j\notin A, j\neq l}\frac{1}{\jap{\zeta_j}^{d_2^+}}\right\}\mathbbm{1}_{|\Phi^\eta + \Phi^\zeta-\alpha|<M}d\eta_{j\notin A}d\zeta_{j\notin A} \\&\lesssim \sup_{\xi_{j\in A}} \int_{\Gamma_\zeta}\left(\int _{\Gamma_\eta} \mathcal{M}_1\mathbbm{1}_{|\Phi^\eta + \Phi^\zeta-\alpha|<M}d\eta_{j\notin A}\right)\max_{l\notin A}\left\{\prod_{j\notin A, j\neq l}\frac{1}{\jap{\zeta_j}^{d_2^+}}\right\}d\zeta_{j\notin A}\\&\lesssim \sup_{\eta_{j\in A},\tilde{\alpha}} \int_{\Gamma_\eta} \mathcal{M}_1\mathbbm{1}_{|\Phi^\eta - \tilde{\alpha}|<M}d\eta_{j\notin A} \lesssim M^\beta.
	\end{align*}
	and the estimate for $\mathcal{K}_2$ is completely analogous.
\end{proof}

\begin{nb}
	In practice, since one usually considers $b=\frac{1}{2}^+$ and $b'=(b-1)^+$, it suffices to take $\beta=\frac{1}{2}^-$ in Lemmas \ref{lem:CS} and \ref{lem:interpol2} and $\beta=1^-$ in Lemmas \ref{lem:interpol} and \ref{lem:descent}.
\end{nb}

\section{Multilinear estimates}

\begin{lem}\label{lem:quadraticas}
	For $N, M>0$ and  $\alpha\in \R$ fixed, 
\begin{equation}\label{eq:polares}
	\int_{|p|,|q|<N} \mathbbm{1}_{|p^2\pm q^2-\alpha|<M}dpdq\lesssim M^{1^-}N^{0^+}.
\end{equation}
and
\begin{equation}\label{eq:quadrat}
	\int \mathbbm{1}_{|p^2-\alpha|<M}dp\lesssim M^{1/2}.
\end{equation}
\end{lem}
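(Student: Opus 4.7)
The plan is to handle the two displayed estimates separately, starting with the simpler one. For \eqref{eq:quadrat}, the set $\{p\in\R : p^2\in(\alpha-M,\alpha+M)\}$ can be analyzed by cases. If $|\alpha|\le M$, then $p^2\le 2M$, giving measure $\lesssim M^{1/2}$. If $\alpha>M$, the set consists of two intervals $\pm(\sqrt{\alpha-M},\sqrt{\alpha+M})$, and the identity
\[
\sqrt{\alpha+M}-\sqrt{\alpha-M}=\frac{2M}{\sqrt{\alpha+M}+\sqrt{\alpha-M}}\le\frac{2M}{\sqrt{\alpha}}\le 2\sqrt{M}
\]
closes the estimate. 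If $\alpha<-M$ the set is empty.

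For \eqref{eq:polares} in the $+$ case, polar coordinates reduce the region to an annulus $p^2+q^2\in(\max(\alpha-M,0),\alpha+M)$, whose area is trivially at most $2\pi M$. Intersecting with $|p|,|q|<N$ also gives a bound of $4N^2$. Interpolating these two bounds is the standard trick: if $M\le N^2$ use the bound $\lesssim M$, and if $M>N^2$ use $\lesssim N^2\le M^{1-\epsilon}N^{2\epsilon}$; taking $\epsilon>0$ arbitrarily small yields the announced $M^{1^-}N^{0^+}$.

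For the genuinely interesting $-$ case, the plan is to perform the linear change of variables $u=p+q$, $v=p-q$, with Jacobian $1/2$, which factors $p^2-q^2=uv$ and sends the box $|p|,|q|<N$ into a subregion of $|u|,|v|<2N$. The measure in question is then controlled by
\[
\int_{|u|<2N}\min\!\left(4N,\frac{2M}{|u|}\right)du,
\]
since for fixed $u\neq 0$ the admissible $v$'s lie in an interval of length $2M/|u|$. Splitting the $u$-integral at $|u|=M/(2N)$ produces, in the regime $M<4N^2$, a bound of the order $M\,(1+\log(4N^2/M))$, while for $M\ge 4N^2$ one simply uses the trivial bound $16N^2$.

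The last and key step is to convert these bounds into the form $M^{1^-}N^{0^+}$. For this I will use the elementary inequality $\log(x+2)\le C_\epsilon x^\epsilon$ to write $M\log(4N^2/M+2)\le C_\epsilon M^{1-\epsilon}N^{2\epsilon}$ when $M\le 4N^2$; in the regime $M>4N^2$ the bound $N^2\le (M/4)^{1-\epsilon}N^{2\epsilon}$ finishes the argument. Relabeling $\epsilon$ gives $\lesssim M^{1^-}N^{0^+}$ uniformly in $\alpha$. The main obstacle is precisely this final interpolation: a naive polar/hyperbolic argument produces an honest logarithm, and one must exploit the dichotomy between $M\lesssim N^2$ (where $N$ is harmless) and $M\gtrsim N^2$ (where the trivial bound $N^2$ can be traded for $M^{1-\epsilon}N^{2\epsilon}$) to remove the full power of $M$ in exchange for a tiny power of $N$.
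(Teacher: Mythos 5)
Your argument is correct and follows essentially the paper's own route: the same case analysis for \eqref{eq:quadrat}, the annulus-versus-box comparison for the $+$ sign, and the hyperbolic change of variables $u=p+q$, $v=p-q$ reducing the $-$ sign to the sublevel set of $uv$. The only difference is bookkeeping: the paper hides the logarithmic endpoint loss by applying H\"older with exponents $1^-,\infty^-$ before integrating, whereas you compute the honest bound $M\bigl(1+\log(4N^2/M)\bigr)$ and convert it afterwards via $\log(x+2)\lesssim_\epsilon x^\epsilon$, which yields the same $M^{1^-}N^{0^+}$.
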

\begin{proof}
	We begin with \eqref{eq:polares}. For the + sign, it suffices to use Hölder:
	\begin{align*}
\int_{|p|,|q|<N} \mathbbm{1}_{|p^2+q^2-\alpha|<M}dpdq\lesssim \left(\int_{|p|,|q|<N}dp_1dq_1\right)^{0^+}\left(\int \mathbbm{1}_{|p^2+q^2-\alpha|<M}dpdq\right)^{1^-}\lesssim M^{1^-}N^{0^+}.
	\end{align*}
For the $-$ sign, 
\begin{align*}
	\int_{|p|,|q|<N} \mathbbm{1}_{|p^2-q^2-\alpha|<M}dpdq&\lesssim 	\int_{|p_1|,|q_1|<2N} \mathbbm{1}_{|p_1q_1-\alpha|<M}dp_1dq_1\\&\lesssim 	\int_{|q_1|<2N}\left(\int_{|p_1|<2N}dp_1\right)^{0^+} \left(\int\mathbbm{1}_{|p_1q_1-\alpha|<M}dp_1\right)^{1^-}dq_1 \\&\lesssim N^{0^+}\int_{|q_1|<2N} \frac{M^{1^-}}{|q_1|^{1^-}}dq_1\lesssim N^{0^+}M^{1^-}.
\end{align*}
For \eqref{eq:quadrat}, the estimate is trivial if $|\alpha|\lesssim M$. If $|\alpha|\gg M$, then
$$
\int \mathbbm{1}_{|p^2-\alpha|<M}dp\lesssim (|\alpha|+M)^{1/2} - (|\alpha|-M)^{1/2} \lesssim \frac{M}{\left(|\alpha|-M\right)^{1/2}}\lesssim M^{1/2}.
$$
\end{proof}
\begin{nb}
	Throughout this section, $\alpha\in\R$ will always be fixed. It is clear from the following proofs that the implicit constants of the estimates do not depend on $\alpha.$
\end{nb}
\subsection{Estimates for the modified Zakharov-Kuznetsov equation}
In this section, we prove trilinear estimates in the Zakharov-Kuznetsov Bourgain spaces in $d$ dimensions. Define
\begin{equation}\label{eq:phix}
\Phi^x=x^3-\sum_{j=1}^3 x_j^3.
\end{equation}
We start by recalling a frequency-restricted estimate for the modified KdV. We include the proof for the sake of completeness.
\begin{lem}[\cite{CS}]\label{lem:mkdv}
	Let $s>1/4$. For $x\in \R$ fixed, define $A^x=\{(x_1,x_2)\in \R^3:|x|\simeq |x_1|\simeq |x_2|\simeq |x_3|,\ x_3=x-x_1-x_2\}$. Then, for any $M>1$,
	$$
	\sup_{x}\int_{A^x} \jap{x}^{2(1-2s)}\mathbbm{1}_{|\Phi^x-\alpha|<M} dx_1dx_2\lesssim M^{1^-}.
	$$
\end{lem}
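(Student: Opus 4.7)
The starting point is the classical factorization
$$\Phi^x = x^3 - x_1^3 - x_2^3 - x_3^3 = 3(x-x_1)(x-x_2)(x-x_3),$$
valid because $x_1+x_2+x_3 = x$. Setting $y_i := x - x_i$ makes $y_1+y_2+y_3 = 2x$, $\Phi^x = 3\,y_1 y_2 y_3$, and the condition $(x_1,x_2)\in A^x$ translates into $|y_i|\lesssim |x|$ for each $i$; the change of variables has unit Jacobian. In the degenerate regime $|x|^3\lesssim M$, one simply bounds $|A^x|\lesssim |x|^2$, whence $\jap{x}^{2(1-2s)}|x|^2 \lesssim |x|^{4-4s}\lesssim M^{(4-4s)/3}$, which is $M^{1^-}$ precisely when $s>1/4$. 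It thus suffices to handle the main regime $|x|^3\gg M$.

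For this regime I will introduce the symmetric coordinates $(y_3,w)$ with $w := y_1 - y_2$. A direct computation gives $y_1 y_2 = \tfrac{1}{4}[(2x-y_3)^2 - w^2]$ and hence
$$\Phi^x = \tfrac{3 y_3}{4}\bigl[(2x-y_3)^2 - w^2\bigr],$$
which is quadratic in $w$ with leading coefficient $-3y_3/4$. The permutation invariance of $y_1 y_2 y_3$ lets me split the integral into three subregions according to which of $|y_1|,|y_2|,|y_3|$ is maximal (which must be $\gtrsim |x|$ since $y_1+y_2+y_3 = 2x$); each subregion is of the form above after relabeling, so it suffices to treat $|y_3|\sim |x|$. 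Applying \eqref{eq:quadrat} together with its standard refinement (the sharper bound $M/\sqrt{|a\beta|}$ for $\int\mathbbm{1}_{|ap^2-\beta|<M}\,dp$ when the sublevel lies off-center from the vertex) yields, for each fixed $y_3$,
$$\int dw\,\mathbbm{1}_{|\Phi^x-\alpha|<M}\ \lesssim\ \min\!\bigl\{\sqrt{M/|y_3|},\ M/\sqrt{|y_3\,K(y_3)|}\bigr\},\qquad K(y_3) := \tfrac{3 y_3}{4}(2x-y_3)^2 - \alpha.$$

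The remaining $y_3$-integration over $|y_3|\sim |x|$ proceeds by a dyadic decomposition of $|K(y_3)|\sim 2^j$, with $j$ ranging between $\log M$ and $\log |x|^3$. Since $K'(y_3) = \tfrac{3}{4}(2x-y_3)(2x-3 y_3)$ vanishes only at $y_3 = 2x$ and $y_3 = 2x/3$, one has $|\{|K|\sim 2^j\}|\lesssim 2^j/|x|^2$ away from these two points, while near each of them $K$ is quadratic and a second application of \eqref{eq:quadrat} yields $|\{|K|\lesssim 2^j\}|\lesssim \sqrt{2^j/|x|}$. Summing the dyadic contributions produces $\int dy_3\,(\min) \lesssim (M/|x|)\log(|x|^3/M)$. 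Absorbing the logarithm via $\log X \lesssim X^{\epsilon}$ (for any $\epsilon>0$) and pairing with $\jap{x}^{2(1-2s)}\sim |x|^{2-4s}$ delivers the bound $M^{1-\epsilon}|x|^{1-4s+O(\epsilon)}$, which is uniformly bounded in $|x|\geq 1$ once $\epsilon$ is chosen small enough, since $1-4s<0$ for $s>1/4$.

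The principal difficulty lies in the fiberwise step: the unrefined bound $\sqrt{M/|y_3|}$ from \eqref{eq:quadrat} alone, followed by crude integration in $y_3$, produces only $M^{1/2}|x|^{1/2}$ for the 2D integral, which would force $s>5/8$. Bringing the threshold down to the optimal $s>1/4$ requires the sharper off-center sublevel estimate $M/\sqrt{|y_3 K|}$ together with a careful analysis of the two critical points of $K(y_3)$, where $K$ vanishes quadratically; this degeneracy is exactly the source of the logarithmic factor which must be absorbed at the end.
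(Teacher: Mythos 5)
Your proof is correct, but it follows a genuinely different route from the paper's. The paper reduces to the sign configuration $x\simeq x_1\simeq x_2\simeq -x_3$, rescales $p_j=x_j/x$, observes that the rescaled phase $P$ has a nondegenerate saddle at $(1,1)$ (vanishing gradient, $\det D^2P<0$), applies Morse's lemma to write $\Phi^x=x^3(q_1^2-q_2^2)$, and concludes in one stroke from the two-dimensional sublevel bound \eqref{eq:polares}, absorbing the $N^{0^+}$ loss with $s>1/4$. You instead exploit the exact algebraic factorization $\Phi^x=3(x-x_1)(x-x_2)(x-x_3)$ special to the KdV resonance, complete the square in the antisymmetric variable $w$, and run a fiberwise one-dimensional sublevel estimate --- the off-center refinement $M/\sqrt{|a\beta|}$ of \eqref{eq:quadrat}, which is indeed what the paper's proof of \eqref{eq:quadrat} gives --- followed by a dyadic decomposition in the level sets of $K(y_3)$, treating its two quadratic critical points by hand; the resulting $(M/|x|)\log(|x|^3/M)$ bound is then paired with $\jap{x}^{2-4s}$ and the logarithm absorbed using $s>1/4$, exactly as you say. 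Your approach buys explicitness: no Morse lemma, no need for the two-dimensional estimate \eqref{eq:polares}, a quantified logarithmic loss instead of $N^{0^+}$, and a transparent view of where $s>1/4$ enters; the price is that it leans on the exact cubic factorization, so it does not serve as the reusable template that the paper's rescale-and-Morse argument provides for the other equations treated there. Two small points: the chain $\jap{x}^{2(1-2s)}|x|^2\lesssim |x|^{4-4s}$ in your degenerate regime is only valid for $|x|\gtrsim 1$ (for $|x|\lesssim 1$ the integral is trivially $O(1)\le M$), and the truncation of the dyadic sum at $2^j\lesssim |x|^3$ should be justified by the constraint $|w|\lesssim |x|$ inherited from $A^x$ (or, alternatively, by capping the level-set measure by $|x|$, which makes the tail sum to $M/|x|$ anyway); both are cosmetic.
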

\begin{proof}
If $|x|<1$, the integral is uniformly bounded. If $|x|>1$, it suffices to consider the case $x\simeq x_1\simeq x_2 \simeq -x_3$. Writing
$$
\Phi^x=x^3P(p_1,p_2),\quad p_j=\frac{\xi_j}{\xi},\ j=1,2,
$$
we have $p_1,p_2\simeq 1$, $P(1,1)=0$, $\nabla P (1,1) = 0$ and $\det D^2P(1,1)< 0$. Therefore, by Morse's lemma, there exists a diffeomorphism $(p_1,p_2)\mapsto (q_1,q_2)$ such that
$$
\Phi^x = x^3 (q_1^2-q_2^2).
$$
Therefore, using \eqref{eq:polares},
\begin{align*}
\int_{A^x} |x|^{2(1-2s)}\mathbbm{1}_{|\Phi^x-\alpha|<M} dx_1dx_2&\lesssim \int_{p_1,p_2\simeq 1} |x|^{2(1-2s)}\mathbbm{1}_{|x^3P-\alpha|<M} |x|^2dp_1dp_2\\&\lesssim \int_{|q_1|,|q_2|\ll 1} |x|^{4-4s}\mathbbm{1}_{|x^3(q_1^2-q_2^2)-\alpha|<M} dq_1dq_2 \lesssim M^{1^-}.
\end{align*}
\end{proof}

In the two dimensional case, the linear change of coordinates, first used in \cite{grunrockherr},
\begin{equation}\label{change}
	x'=\mu x + \lambda y, \qquad y'=\mu x -\lambda y,
\end{equation}
with $\mu=4^{-1/3}$ and $\lambda=\sqrt3 \, 4^{-1/3}$ allows the third order spatial derivative of the linear
dispersion to be symmetrized, transforming the \ref{ZK} equation into the equivalent form
\begin{equation}\label{eq:ZKsym}
	\partial_t u + \partial^3_{x'} u + \partial^3_{y'} u=\mu\,  \partial_{x'} u^3 +\mu\,  \partial_{y'} u^3.
\end{equation}
Evidently, Theorem \ref{thm:2dmzk} follows from the analogous result for equation \eqref{eq:ZKsym}.


We write the associated Bourgain space as $X_{symZK}^{s,b}$, whose associated phase function is
$$
\phi(\xi)=x^3+y^3, \quad \xi=(x,y).
$$
Consequently, the total phase function is simply $\Phi=\Phi^x + \Phi^y$ (see \eqref{eq:phix}). We are now in position to prove the necessary multilinear estimate for \eqref{eq:ZKsym}:

\begin{prop}\label{prop:2dzk}
	Let $d=2$, $b=\frac{1}{2}^+$ and $b'=(b-1)^+$. For $s>1/4$ and $\epsilon<\min\{2s-\frac{1}{2},1\}$,
	$$
	\|\jap{\nabla}u_1u_2u_3\|_{X_{symZK}^{s+\epsilon,b'}}\lesssim \prod_{j=1}^3\|u_j\|_{X_{symZK}^{s,b}}.
	$$ 
\end{prop}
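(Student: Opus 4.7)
The plan is to apply the anisotropic reduction of Lemma \ref{lem:interpol2}, taking advantage of the decoupling $\Phi = \Phi^x + \Phi^y$ inherited from the symmetrized dispersion $\phi(\xi) = x^3 + y^3$. Since the modified Zakharov-Kuznetsov equation is real-valued and the multiplier is $m(\Xi) = \jap{\xi}$, the integrand to control is $\mathcal{M} = \jap{\xi}^{1+s+\epsilon}/\prod_{j=1}^3 \jap{\xi_j}^s$. I would perform a Littlewood--Paley decomposition in each spatial frequency, assume by symmetry that $|\xi_1| \geq |\xi_2| \geq |\xi_3|$, and split into the standard cases according to the relative sizes of $|\xi|$ and the $|\xi_j|$.

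The non-resonant cases in which $|\xi_1| \gg |\xi_2|$ (or $|\xi| \ll |\xi_1|$) benefit from large gaps in the weight $\mathcal{M}$ and should be dispatched with the simpler Cauchy--Schwarz reduction (Lemma \ref{lem:CS}): one estimates only $\Phi^x$ via Lemma \ref{lem:mkdv} and uses the crude bound $N^{0^+}$ for one $y$-integration. The delicate case is $|\xi| \sim |\xi_1| \sim |\xi_2| \sim |\xi_3| \sim N^*$, in which $\mathcal{M} \sim (N^*)^{1 + \epsilon - 2s}$; here a naive Cauchy--Schwarz saturates precisely at $\epsilon = 2s - 1/2$, and one must invoke the anisotropic Lemma \ref{lem:interpol2} with a suitable choice such as $A = \{0\}$.

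For the inner $L^2$-integration in $(x_1, x_2)$, I would write $|\Phi^x - (\alpha - \Phi^y)| < M$ and apply Lemma \ref{lem:mkdv} (the weight $\jap{x}^{2(1-2s)}$ there is absorbed by the analogous factor in $\mathcal{M}^2$). The outer $L^1$-integration in the free $y$-coordinates, reduced by the convolution constraint to a single coordinate when $A = \{0\}$, is then controlled by a second, one-dimensional sublevel estimate for the cubic polynomial $\Phi^y$, exploiting that $\partial_{y_j} \Phi^y$ has size $(N^*)^2$ away from its critical locus. Combining the two contributions should yield the desired $M^{(1/2)^-}$ bound and hence the required $\beta < 1/2$.

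The principal obstacle is that Lemma \ref{lem:mkdv} applies only in the fully one-dimensional resonant regime $|x| \simeq |x_1| \simeq |x_2| \simeq |x_3|$, which does \emph{not} follow from $|\xi_j| \sim N^*$: a frequency $\xi_j$ of magnitude $N^*$ can perfectly well have $x_j \ll N^*$ provided $y_j \sim N^*$. These off-diagonal subcases require an extra splitting in which one either swaps the roles of $x$ and $y$ (an exchange made admissible by the symmetrization) or, when both components are of intermediate size, applies Morse's lemma to the joint critical locus of $\Phi^x$ and $\Phi^y$ and invokes Lemma \ref{lem:quadraticas} directly. The coincidence of the threshold $\epsilon < 2s - 1/2$ with both the mKdV nonlinear smoothing threshold and the Ribaud--Vento well-posedness threshold is a strong indication that this geometric picture is the correct one.
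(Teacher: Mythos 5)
Your plan breaks down at the step where you claim the non-resonant cases $|\xi|\sim|\xi_1|\gg|\xi_2|\ge|\xi_3|$ can be dispatched by the Cauchy--Schwarz reduction of Lemma \ref{lem:CS}. That lemma demands $\sup_{\xi}\int_{\Gamma_\xi}\mathcal{M}^2\,\fiM\, d\xi_1 d\xi_2\lesssim M^{2\beta}$ with $\beta<\tfrac12$, i.e.\ a bound $M^{1^-}$, where here $\mathcal{M}^2\sim |\xi|^{2+2\epsilon}\jap{\xi_2}^{-2s}\jap{\xi_3}^{-2s}$. But the unbalanced region contains the resonant sheet $\xi_1\simeq\xi$, $\xi_3\simeq-\xi_2$, on which $\Phi$ vanishes (by oddness of $x^3+y^3$). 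Taking $M=1$ and $|\xi_2|\sim R=|\xi|^{\theta}$ with $0<\theta<1$, for each fixed $\xi_2$ the set of $\xi_1$ with $|\Phi|<1$ contains a tube of length $\sim R$ and width $\sim|\xi|^{-2}$ around that curve, so the Cauchy--Schwarz integral is $\gtrsim |\xi|^{2+2\epsilon}R^{-4s}\cdot R^2\cdot R|\xi|^{-2}=|\xi|^{2\epsilon}R^{3-4s}$, which diverges as $|\xi|\to\infty$ for every $s<3/4$. So for $s$ near $1/4$ Lemma \ref{lem:CS} cannot close this case; the paper instead treats it (its Case B, and likewise most of the comparable-frequency case) with the Schur-type Lemma \ref{lem:interpol} with $\beta=1^-$: fix $(\xi,\xi_2)$, change variables $x_1\mapsto\Phi^x$ with Jacobian $|x_1^2-x_3^2|\gtrsim|\xi|^2$ (available only after a possible swap of $\xi_1$ and $\xi_2$, cf.\ \eqref{eq:nonstat}), and absorb the leftover $y_1$-integration with the weight $\jap{y_3}^{-2s}$; then the symmetric estimate in $\xi_2$. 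Lemma \ref{lem:mkdv} is of no use in this regime, since, as you yourself observe, it requires all four $x$-components to be comparable.

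Your sketch of the fully comparable case is closer in spirit to the paper, but the anisotropic step is not set up correctly. With $A=\{0\}$ in Lemma \ref{lem:interpol2} the outer integration is over two free $y$-variables on one side (and degenerate on the other), not a single coordinate; the paper effectively pairs $\{y,y_1\}$ against $\{y_2,y_3\}$. Moreover, the indicator $\fia$ is consumed entirely by the inner $(x_1,x_2)$-integral, which through Lemma \ref{lem:mkdv} yields only $M^{1/2^-}$ after the square root, so there is no ``second sublevel estimate for $\Phi^y$'' left for the outer $L^1$ step; what makes that step finite is that in the only configuration where Lemma \ref{lem:mkdv} is genuinely needed ($x\simeq x_1\simeq x_3\simeq -x_2$ with $|x-x_j||y-y_j|\ll|\xi|$ for all $j$) the outer variables range over sets of measure $O(1)$. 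The remaining comparable-frequency configurations are handled in the paper again via Lemma \ref{lem:interpol} at $\beta=1^-$, by completing the square in both coordinates (or in $x$ alone) and invoking the quadratic sublevel bounds of Lemma \ref{lem:quadraticas}; this matches the rough picture in your last paragraph, but the case decomposition and the assignment of reduction lemmas would have to be reorganized along these lines for the argument to reach the stated thresholds $s>1/4$, $\epsilon<\min\{2s-\tfrac12,1\}$.
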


\begin{proof}
	Without loss of generality, we may assume that $|\xi|\ge |\xi_1|\ge |\xi_2| \ge |\xi_3|$, which implies that $|\xi_1|\gtrsim |\xi|$. If $|\xi|<1$, we use Lemma \ref{lem:CS}, since
	$$
	\sup_{\xi} \left(\int \frac{\jap{\xi}^{2s+2\epsilon}}{\prod_{j=1}^3\jap{\xi_j}^{2s}}\fia d\xi_1d\xi_2 \right)^{\frac{1}{2}}\lesssim \left(\int_{|\xi_1|, |\xi_2|< 1}  d\xi_1d\xi_2 \right)^{\frac{1}{2}}\lesssim 1.
	$$
	If $|\xi|>1$, we split the proof in several cases:
	
	\noindent\textbf{Case A.} $|\xi_3|\gtrsim |\xi|$, that is, all frequencies are comparable. Write
	\begin{align*}
		\Phi^x &= (x-x_2)(x-x_1)(x_1+x_2) = C_1(x,x_2) - (x-x_2)\left(x_1-\frac{x-x_2}{2}\right)^2 \\&= C_2(x_1,x_3) + (x_1+x_3)\left(x_2 + \frac{x_1+x_3}{2}\right)^2.
	\end{align*}
	\textbf{Subcase A1.} For some $j$, $|x-x_j||y-y_j|\gtrsim |\xi|$. Without loss of generality, $j=2$. Set
	$$
	x_1= \frac{x-x_2}{2} + \frac{p_1}{\sqrt{|x-x_2|}},\qquad y_1=\frac{y-y_2}{2} + \frac{q_1}{\sqrt{|y-y_2|}}.
	$$
	Then, for some $\tilde{\alpha}=\tilde{\alpha}(\xi,\xi_2)$, \eqref{eq:polares} implies
	\begin{align*}
		\sup_{\xi,\xi_2}\int \frac{\jap{\xi}^{s+\epsilon+1}}{\prod_{j=1}^3\jap{\xi_j}^s}\fia d\xi_1 &\lesssim\sup_{\xi,\xi_2} \int\frac{|\xi|^{1+\epsilon-2s}}{\sqrt{|x-x_2||y-y_2|}}\mathbbm{1}_{|p_1^2\pm q_1^2-\tilde{\alpha}(\xi,\xi_2)|<M}dp_1dq_1\\&\lesssim\sup_{\xi,\xi_2} \int_{|p_1|,|q_1|<|\xi|^{3/2}}|\xi|^{\frac{1}{2}+\epsilon-2s}\mathbbm{1}_{|p_1^2\pm q_1^2-\tilde{\alpha}(\xi,\xi_2)|<M}dp_1dq_1\lesssim M^{1^-}.
	\end{align*}
	For the other side of the interpolation, take 
	$$
	x_2=\frac{x_1+x_3}{2} + \frac{p_2}{\sqrt{|x_1+x_3|}},\qquad y_2=\frac{y_1+y_3}{2}
	+ \frac{q_2}{\sqrt{|y_1+y_3|}}$$
	Since $(x_1+x_3)(y_1+y_3)=(x-x_2)(y-y_2)$, the same computations yield
	\begin{align*}
		\sup_{\xi_1,\xi_3}\int \frac{\jap{\xi}^{s+\epsilon+1}}{\prod_{j=1}^3\jap{\xi_j}^s}\fia d\xi_2  \lesssim M^{1^-}
	\end{align*}
	and thus the estimate follows from Lemma \ref{lem:interpol}.
	
	\noindent \textbf{Subcase A2.} For all $j$, $|x-x_j||y-y_j|\ll |\xi|$. Assuming that $|\xi|\sim|x|$, we cannot have $|x-x_j|\ll |\xi|$ for all $j$ (otherwise $x=\sum x_j \simeq 3x$). Without loss of generality, $|x-x_2|\gtrsim |\xi|$ and thus $|y-y_2|\ll 1$.
	\vskip5pt
	\noindent 1. If $|x-x_1|\gtrsim |\xi|$ and $|y-y_1|\ll 1$, we perform the change of variables only in the $x$-direction. Letting
		$$
		x_1=x_1^0 + \frac{p_1}{\sqrt{|x-x_2|}},
		$$
		the application of \eqref{eq:quadrat} yields
		\begin{align*}
			\sup_{\xi,\xi_2}\int \frac{\jap{\xi}^{s+\epsilon+1}}{\prod_{j=1}^3\jap{\xi_j}^s}\fia d\xi_1 &\lesssim\sup_{\xi,\xi_2} \int_{|y-y_1|\ll 1}\frac{|\xi|^{1+\epsilon-2s}}{\sqrt{|x-x_2|}}\mathbbm{1}_{|p_1^2 \pm \Phi^y-\tilde{\alpha}(\xi,\xi_2)|<M}dp_1dy_1\\&\lesssim\sup_{\xi,\xi_2} \int_{|y-y_1|\ll 1}|\xi|^{\frac{1}{2}+\epsilon-2s}\mathbbm{1}_{|p_1^2\pm\Phi^y-\tilde{\alpha}(\xi,\xi_2)|<M}dp_1dy_1\lesssim M^{1^-}.
		\end{align*}
		Replacing $\xi_1$ with $\xi_2$, one easily obtains the other side of the interpolation in order to apply Lemma \ref{lem:interpol}.
		
		\vskip5pt
		\noindent 2. If  $|x-x_1|,\ |x-x_3|\ll |\xi|$, since $|\xi|\sim |x|$, we must have $x\simeq x_1\simeq x_3 \simeq -x_2$. The estimate follows from Lemma \ref{lem:interpol2}, since, by Lemma \ref{lem:mkdv},
		\begin{align*}
			\sup_{x,y,y_1} \int_{|y-y_2|\ll1} \left(\int |x|^{2(1+\epsilon-2s)}\fia dx_1dx_2\right)^{\frac{1}{2}}dy_2 &\lesssim \sup_{x}\left(\int |x|^{2(1+\epsilon-2s)}\mathbbm{1}_{|\Phi^x-\alpha|<M} dx_1dx_2\right)^{\frac{1}{2}}\\&\lesssim M^{1/2^-}
		\end{align*}
		and analogously
		\begin{align*}
			\sup_{x,y_2,y_3} \int_{|y_1+y_3|\ll1} \left(\int |x|^{2(1+\epsilon-2s)}\fia dx_1dx_2\right)^{\frac{1}{2}}dy_1 &\lesssim \sup_{x}\left(\int |x|^{2(1+\epsilon-2s)}\mathbbm{1}_{|\Phi^x-\alpha|<M} dx_1dx_2\right)^{\frac{1}{2}}\\&\lesssim M^{1/2^-}.
		\end{align*}

	\noindent \textbf{Case B.} $|\xi_3|\ll |\xi_1|$. Assuming that $|x_1|\sim |\xi_1|$, we claim that, up to a permutation of $\xi_1$ and $\xi_2$, we must have
	\begin{equation}\label{eq:nonstat}
		|x_1^2-x_3^2|\gtrsim |\xi|^2,\ |x^2-x_2^2|\gtrsim |\xi|^2.
	\end{equation}
	The first inequality holds: if not, since $|\xi|\sim |\xi_1|\sim |x_1|$, we would have $|x_1|\sim |x_3|$, contradicting $|\xi_1|\ll |\xi_3|$. If $|x^2-x_2^2|\ll |\xi|^2$, let us see that \eqref{eq:nonstat} holds with $x_1$ and $x_2$ interchanged. Indeed, since $|x_3|\ll |x_1|\sim |\xi|$,
	$$
	|x_1+x_3||x_1+2x_2+x_3|=|x^2-x_2^2|\ll|\xi|^2\sim |x_1|^2\mbox{ implies }x_1\simeq -2x_2,\ x\simeq -x_2.
	$$
	Therefore
	$$
	|x_2^2-x_3^3|\sim |x_2|^2 \sim |\xi|^2,\quad |x^2-x_1^2|\sim 3|x|^2 \sim |\xi|^2
	$$
	and the claim follows. Using Hölder and performing the change of variables $x_1\mapsto \Phi^x$,
	\begin{align*}
		\sup_{\xi,\xi_2}\int \frac{\jap{\xi}^{s+\epsilon+1}}{\jap{\xi_1}^s\jap{\xi_3}^{2s}}\fia d\xi_1 &\lesssim \sup_{\xi,\xi_2}\left(\int_{|y_1|\lesssim |\xi|} \frac{\jap{\xi}^{1^++\epsilon}}{\jap{y_3}^{2s}}\mathbbm{1}_{|\Phi^x+\Phi^y-\alpha|<M} dx_1dy_1\right)^{1^-} \\&\lesssim \sup_{\xi,\xi_2}\left(\int_{|y_1|\lesssim |\xi|} \frac{\jap{\xi}^{1^++\epsilon}}{\jap{y_3}^{2s}}\mathbbm{1}_{|\Phi^x+\Phi^y-\alpha|<M} \frac{1}{|\xi|^2}d\Phi^xdy_1\right)^{1^-} \\&\lesssim \sup_{\xi,\xi_2}\left(\int_{|y_1|\lesssim |\xi|} \frac{1}{\jap{y-y_1-y_2}^{2s}|\xi|^{1^--\epsilon}}dy_1\right)^{1^-}M^{1^-} \lesssim M^{1^-}.
	\end{align*}
	Analogously,
	$$
	\sup_{\xi_1,\xi_3}\int \frac{\jap{\xi}^{s+\epsilon+1}}{\jap{\xi_1}^s\jap{\xi_2}^{2s}}\fia d\xi_2 \lesssim  M^{1^-}
	$$
	and the proof follows from Lemma \ref{lem:interpol}.
	%
	
\end{proof}

In dimensions three and higher, no symmetrization of the linear differential operator analogous to the two dimensional one is possible, so the following proof deals with the original  \ref{ZK} equation directly, in particular considering the (non-symmetric) phase function \eqref{phaseZK}.

\begin{prop}\label{prop:3dzk}
	Fix $d\ge3$, $b=\frac{1}{2}^+$ and $b'=(b-1)^+$. For $s>d/2-1$ and $\epsilon<\min\{2s-d+2,1\}$,
	$$
	\|\jap{\nabla}u_1u_2u_3\|_{X_{ZK}^{s+\epsilon,b'}}\lesssim \prod_{j=1}^3\|u_j\|_{X_{ZK}^{s,b}}.
	$$ 
\end{prop}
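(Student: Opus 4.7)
The plan is to follow the same scheme as Proposition \ref{prop:2dzk}, applying Lemma \ref{lem:interpol} with the asymmetric splitting
\begin{equation*}
\mathcal{M}_1 = \frac{\jap{\xi}^{1+s+\epsilon}}{\jap{\xi_1}^s\jap{\xi_3}^{2s}}, \qquad \mathcal{M}_2 = \frac{\jap{\xi}^{1+s+\epsilon}}{\jap{\xi_1}^s\jap{\xi_2}^{2s}},
\end{equation*}
whose geometric mean coincides with the multiplier of the trilinear form. By symmetry between $u_2$ and $u_3$ we may order $|\xi|\ge|\xi_1|\ge|\xi_2|\ge|\xi_3|$ and set $N=\jap{\xi}$; the case $N\lesssim 1$ is disposed of by Lemma \ref{lem:CS}. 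The core analytic inputs are that for $\phi(\xi)=x|\xi|^2$ with $\xi=(x,\mathbf{y})\in\mathbb{R}\times\mathbb{R}^{d-1}$ one has $|\nabla\phi(\xi)|\sim|\xi|^2$ and $\|\nabla^2\phi(\xi)\|\sim|\xi|$, so whenever two frequencies differ markedly in size the corresponding partial gradient of $\Phi$ is automatically of maximal order $N^2$.

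In the high--low regime $|\xi_3|\ll N$ (Case B), this yields $|\nabla_{\xi_1}\Phi|=|\nabla\phi(\xi_1)-\nabla\phi(\xi_3)|\sim N^2$ uniformly. Fixing $(\xi,\xi_2)$ and changing variables $\xi_1\mapsto(\Phi,\xi_1^\perp)\in\mathbb{R}\times\mathbb{R}^{d-1}$, one finds
\begin{equation*}
\int \mathcal{M}_1\fia\,d\xi_1 \lesssim \frac{M\,N^{1+\epsilon}}{N^2}\int_{|\xi_3|\lesssim N}\jap{\xi_3}^{-2s}\,d\xi_3^\perp,
\end{equation*}
and a dyadic decomposition $|\xi_3|\sim 2^k$ bounds the remaining integral by $N^{(d-1-2s)^+}$. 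The total is $\lesssim MN^{\epsilon+d-2-2s+0^+}$, which is $O(M^{1^-})$ exactly when $\epsilon<2s-d+2$. The other side of the interpolation is handled identically: fixing $(\xi_1,\xi_3)$ and integrating $\mathcal{M}_2$ in $\xi_2$, the Taylor expansion $\nabla\phi(\xi)-\nabla\phi(\xi_2)=\nabla^2\phi(\tilde\xi)(\xi_1+\xi_3)$ together with $|\xi_1+\xi_3|\sim N$ gives $|\nabla_{\xi_2}\Phi|\sim N^2$ and hence the matching bound.

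In the high--high--high regime $|\xi_j|\sim N$ for every $j$ (Case A), the non-resonant subregion where $|\nabla_{\xi_1}\Phi|\sim N^2$ is treated just as in Case B, now with $\mathcal{M}_1\sim N^{1+\epsilon-2s}$ and transverse volume $\lesssim N^{d-1}$, producing the same bound. Near the stationary locus $\nabla\phi(\xi_1)=\nabla\phi(\xi_3)$ (generically the single point $\xi_1=\xi_3=(\xi-\xi_2)/2$), one rescales $\xi_j=Np_j$ so that $\Phi=N^3 P(\mathbf{p})$ with $P(\mathbf{p})=\Phi(1,p_1,p_2,p_3)$ a fixed polynomial on a bounded region, and applies Morse's lemma about the stationary point---exactly as in the proof of Lemma \ref{lem:mkdv}---to reduce the level-set measure to that of a non-degenerate quadratic form in $d$ variables. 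Unwinding the rescaling, with Jacobian $N^d$ and scaled constraint $|P-\tilde\alpha|<M/N^3$, once more produces $\lesssim M^{1^-}N^{\epsilon+d-2-2s+0^+}$, closing the estimate under the same threshold.

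The main obstacle is precisely this Morse step in Case A: from the explicit form of $\nabla^2\phi$ one sees that the Hessian is definite precisely when $|\mathbf{y}|<\sqrt 3\,|x|$ and of mixed signature otherwise, and these two regimes produce different sublevel-set behaviours for a quadratic form in $d$ variables on a bounded region. The argument must therefore track the geometry of the stationary set, likely by further splitting according to the signature of $\nabla^2\phi$ and integrating the $(d-1)$-dimensional variable $\mathbf{y}_1$ (in which $\Phi$ is quadratic at fixed $x_1$) separately from the $1$-dimensional $x_1$. Once these geometric inputs are in place, the remaining computations are routine extensions of those already carried out in Proposition \ref{prop:2dzk}.
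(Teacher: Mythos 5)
Your skeleton is the paper's: the same split $\mathcal{M}_1,\mathcal{M}_2$ in Lemma \ref{lem:interpol}, the nonstationary change of variables, the rescaling $\Phi=|\xi|^3P(\mathbf p)$ and a Morse-type reduction near stationary points. But the step you yourself flag as ``the main obstacle'' is precisely the content of the paper's proof, and the direction you propose for closing it is off target. When you integrate in $\xi_1$ the relevant Hessian is $D^2_{\xi_1}\Phi=-\nabla^2\phi(\xi_1)-\nabla^2\phi(\xi_3)=-\nabla^2\phi(\xi_1+\xi_3)$ (the entries of $\nabla^2\phi$ are linear in the frequency). The issue is not its signature: a mixed-signature quadratic form in two variables is controlled by \eqref{eq:polares} exactly as well as a definite one. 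The issue is degeneracy: this Hessian is singular on the cone $|\mathbf y|^2=3x^2$ (evaluated at $\xi_1+\xi_3$) and has rank $\le 1$ precisely when $\xi_1\simeq-\xi_3$; in the extreme case $\xi=\xi_2$ one has $\Phi\equiv 0$ along $\xi_3=-\xi_1$, so no sublevel gain is available at all for that pairing. Your description of the stationary locus as ``generically the single point $\xi_1=\xi_3=(\xi-\xi_2)/2$'' misses this branch: $\nabla\phi$ is even, so $\xi_1\simeq-\xi_3$ is stationary too, and it is the dangerous one. What the paper supplies, and your proposal lacks, is the semi-nondegeneracy step: $\mbox{rank}(D^2P)\ge 2$ unless $p_1=-p_3$, $\mbox{rank}(D^2Q)\ge 2$ unless $q=q_2$, together with a combinatorial permutation of $\xi_1,\xi_2,\xi_3$ ensuring both simultaneously; then Morse's \emph{splitting} lemma, which needs only two nondegenerate directions (this is where $d\ge 3$ enters), reduces matters to a two-variable quadratic form and \eqref{eq:polares} yields the full $M^{1^-}$.

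There is also a concrete error in your Case B. The uniform bound $|\nabla_{\xi_2}\Phi|\sim N^2$ deduced from $|\xi-\xi_2|=|\xi_1+\xi_3|\sim N$ via a mean-value/Hessian argument is false: $\nabla^2\phi$ is indefinite and can annihilate the vector $\xi_1+\xi_3$, and since $\nabla\phi$ is even the configuration $\xi_2\simeq-\xi$ gives $\nabla\phi(\xi)-\nabla\phi(\xi_2)\simeq 0$ even though $|\xi-\xi_2|\sim N$; this configuration ($\xi_1\simeq 2\xi$, $\xi_3$ small) is compatible with the high--low regime. Hence the $\mathcal{M}_2$ side of the interpolation also needs the stationary/Morse analysis with the rank-$\ge 2$ Hessian (the paper's $Q$-parametrization with $\xi$ as integration variable), not merely a change of variables. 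A minor point in the same spirit: bounds of the form $MN^{\epsilon+d-2-2s}$ should be run through the H\"older trick used throughout the paper to convert them into $M^{1^-}$, since $M$ and $N$ are a priori unrelated. As written, the proposal is an outline whose hard core — the very part that distinguishes $d\ge3$ from $d=2$ — is left open.
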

\begin{proof}
Without loss of generality, we can assume that $|\xi|\gtrsim |\xi_1|\gtrsim |\xi_2|\gtrsim |\xi_3|$ (which also implies that $|\xi|\sim |\xi_1|$). If $|\xi|<1$, then all frequencies are bounded and, as in the previous proof, we apply Lemma 1. Thus we focus on the case $|\xi|>1$.

\textit{Step 1. Semi-nondegeneracy.} Given an element $v\in \R^d$, we write $v=(v^1,\dots, v^d)$. Set
$$
\Phi=|\xi|^3P\left(p_1,p_2,p_3\right),\quad p_j=\frac{\xi_j}{|\xi|},\ p=\frac{\xi}{|\xi|}.
$$
For $\xi,\xi_2$ fixed and $\xi_3=\xi-\xi_2-\xi_1$, $P$ becomes a polynomial on the components of $p_1$. A simple computation shows that
$$
\frac{\partial P}{\partial p_1^1} = 3(p_3^1)^2 + \sum_{j=2}^d (p_3^j)^2 - 3(p_1^1)^2 - \sum_{j=2}^d (p_1^j)^2,\qquad \frac{\partial P}{\partial p_1^j} = p_3^1p_3^j - p_1^1p_1^j,\ j=2,\dots, d
$$
and the Hessian has a block structure
$$
D^2P=-\begin{bmatrix}
	6(p_1^1+p_3^1) & A \\ A^T & D
\end{bmatrix}, \mbox{ where }A= \begin{bmatrix}
2(p_1^2+p_3^2)&\dots & 2(p_1^d+p_3^d)
\end{bmatrix},\ D=(p_1^1+p_3^1)I_{(d-1)\times (d-1)}.
$$
If $p_1^1\neq -p_3^1$, the rank of $D^2P$ is at least two (because of the $D$ block). On the other hand, if $p_1^1=-p_3^1$, the rank of $D^2P$ is smaller than two iff $A=0$. We conclude that $\mbox{rank}(D^2P)\le 1$ iff $p_1=-p_3$.

Similarly, if one writes
$$
\Phi=|\xi_1|^3Q(q_1,q_2,q_3),\quad q_j=\frac{\xi_j}{|\xi_1|},\ q=\frac{\xi}{|\xi_1|},
$$
fixes $\xi_1,\xi_3$ and takes $\xi_2 = \xi-\xi_1-\xi_3$,
$$
\frac{\partial Q}{\partial q^1} = 3(q^1)^2 + \sum_{j=2}^d (q^j)^2 - 3(q_2^1)^2 - \sum_{j=2}^d (q_2^j)^2,\qquad \frac{\partial Q}{\partial q^j} = q^1q^j - q_2^1q_2^j,\ j=2,\dots, d
$$
and $\mbox{rank}(D^2Q)\le 1$ iff $q=q_2$.

We claim that, up to a rearrangement of $\xi_1,\xi_2,\xi_3$, one can ensure that both $D^2P$ and $D^2Q$ have a rank of at least two. If $p_1\neq-p_3$, then
$$
q=\frac{|\xi|}{|\xi_1|}p=\frac{|\xi|}{|\xi_1|}(p_1+p_2+p_3) \neq \frac{|\xi|}{|\xi_1|}p_2=q_2
$$
and we are done. If $p_1=-p_3$ and  $q\neq q_1$, then $p_2\neq -p_3$. Therefore, exchanging $\xi_1$ and $\xi_2$, the claim also follows. Finally, if $p=p_1=-p_3$, then $q\neq q_3$ and $p_2\neq -p_1$ and it suffices to exchange $\xi_2$ with $\xi_3$.

\textit{Step 2. Application of Lemma 1.} By the previous step, we can assume that both $\mbox{rank}(D^2P)$ and $\mbox{rank}(D^2Q)$ are greater or equal than 2. We now prove the claimed estimate through Lemma 1 with $A=\{0,2\}$, interpolating between
$$
\sup_{\xi,\xi_2} \int \frac{\jap{\xi}^{s+1+\epsilon}}{\jap{\xi_1}^s\jap{\xi_3}^{2s}}\fia d\xi_1 \lesssim M^{1^-}
$$
and
$$
\sup_{\xi_1,\xi_3} \int \frac{\jap{\xi}^{s+1+\epsilon}}{\jap{\xi_1}^s\jap{\xi_2}^{2s}}\fia d\xi \lesssim M^{1^-}.
$$
We focus on the first estimate, as the second follows from analogous computations.

\noindent \textbf{Case A}. $|\xi_3|\gtrsim |\xi|$. Then 
\begin{align*}
\sup_{\xi,\xi_2} \int \frac{\jap{\xi}^{s+1+\epsilon}}{\jap{\xi_1}^s\jap{\xi_3}^{2s}}\fia d\xi_1 &\lesssim \sup_{\xi,\xi_2} \int \jap{\xi}^{1+\epsilon-2s}\fia d\xi_1 \\&\lesssim  \sup_{\xi,\xi_2} \int_{|p_1|\lesssim 1} |\xi|^{d+1+\epsilon-2s}\mathbbm{1}_{||\xi|^3P-\alpha|<M} dp_1.
\end{align*}
\textbf{Subcase A1.} $|\nabla P|\gtrsim 1$. Without loss of generality, $|\partial_{p_1^1}P|\gtrsim 1$. Then, applying Hölder and performing the change of variables $p_1^1\mapsto P$,
\begin{align*}
	\sup_{\xi,\xi_2} \int_{|p_1|\lesssim 1} |\xi|^{d+1+\epsilon-2s}\mathbbm{1}_{||\xi|^3P-\alpha|<M} dp_1 &\lesssim \sup_{\xi,\xi_2} \left(\int_{|p_1|\lesssim 1} |\xi|^{d+1^++\epsilon-2s}\mathbbm{1}_{||\xi|^3P-\alpha|<M} dPdp_1^2\dots dp_1^d\right)^{1^-}\\& \lesssim\sup_{\xi,\xi_2} \left(|\xi|^{d+1^++\epsilon-2s}\frac{M}{|\xi|^3}\right)^{1^-}\lesssim M^{1^-}.
\end{align*}
\textbf{Subcase A2.} $|\nabla P|\ll 1$. Since $P$ has a compact set of critical points, it suffices to localize around one of them, that is, fix $z\in \R^d$ such that $\nabla P(z)=0$ and $|p_1-z|<\delta\ll 1$. Since $\mbox{rank}(D^2P(z))\gtrsim 2$, the Hessian has at least two nondegenerate directions. Hence the application of Morse's Splitting Lemma (\cite[Theorem 8.3]{MawhinWillem}) implies the existence of an invertible change of variables $\phi$, from a neighborhood\footnote{As it can be seen from the proof of \cite[Theorem 8.3]{MawhinWillem}, the size of the neighborhood on which the lemma holds can be chosen to depend continuously on $p_2$. Since $p_2$ varies on a compact set, this allows us to choose a universal $\delta>0$ such that Morse's Splitting Lemma holds for every $|p_2|\le 1$ and every critical point $z$.} of $z$ to a neighborhood of $0$, such that $\tilde{p}_1=\phi(p_1)$ satisfies
$$
P(p_1) = P(z) \pm (\tilde{p}_1^1)^2 \pm (\tilde{p}_1^2)^2 + h(\tilde{p}_1^3,\dots, \tilde{p}_1^d), \mbox{ for some }h:\R^{d-2}\to \R.
$$
Then
\begin{align*}
	&\sup_{\xi,\xi_2} \int_{|p_1-z|\ll1} |\xi|^{d+1+\epsilon-2s}\mathbbm{1}_{||\xi|^3P-\alpha|<M} dp_1  \\\lesssim& \sup_{\xi,\xi_2} \left(\int_{|\tilde{p}_1|\ll1} |\xi|^{d+1^++\epsilon-2s}\mathbbm{1}_{||\xi|^3(\pm (\tilde{p}_1^1)^2 \pm (\tilde{p}_1^2)^2 + h(\tilde{p}_1^3,\dots, \tilde{p}_1^d))-\alpha|<M} d\tilde{p}_1^1d\tilde{p}_1^2\dots d\tilde{p}_1^d\right)^{1^-} \\\lesssim& \sup_{\xi,\xi_2} \left(\int_{|\tilde{p}_1|\ll1} \left(\int|\xi|^{d+1^++\epsilon-2s}\mathbbm{1}_{||\xi|^3(\pm (\tilde{p}_1^1)^2 \pm (\tilde{p}_1^2)^2 + h(\tilde{p}_1^3,\dots, \tilde{p}_1^d))-\alpha|<M} d\tilde{p}_1^1d\tilde{p}_1^2\right) d\tilde{p}_1^3\dots d\tilde{p}_1^d\right)^{1^-}\\  \lesssim&\sup_{\xi,\xi_2} \left(|\xi|^{d+1^++\epsilon-2s}\frac{M}{|\xi|^3}\int_{|\tilde{p}_1|\ll1}  d\tilde{p}_1^3\dots d\tilde{p}_1^d\right)^{1^-}\lesssim M^{1^-}.
\end{align*}

\noindent \textbf{Case B.} $|\xi_3|\ll |\xi|\sim |\xi_1|$. This implies that $|\partial_{p_1^1} P|\gtrsim 1$ and we can perform the change of variables $p_1^1\mapsto P$:
\begin{align*}
	\sup_{\xi,\xi_2} \int \frac{\jap{\xi}^{s+1+\epsilon}}{\jap{\xi_1}^s\jap{\xi_3}^{2s}}\fia d\xi_1 & \lesssim \sup_{\xi,\xi_2} \int_{|p_1|\lesssim 1} \frac{|\xi|^{1+\epsilon}}{\jap{\xi_3}^{2s}}\mathbbm{1}_{||\xi|^3P-\alpha|<M} |\xi|^ddp_1\\&\lesssim \sup_{\xi,\xi_2} \left(\int_{|p_1|\lesssim 1} \frac{|\xi|^{2^++\epsilon}}{\jap{\xi_3}^{2s}}\mathbbm{1}_{||\xi|^3P-\alpha|<M} |\xi|^{d-1}dPdp_1^2\dots dp_1^d\right)^{1^-}\\&\lesssim \sup_{\xi,\xi_2} \left(\int_{|p_1|\lesssim 1} \frac{|\xi|^{2^++\epsilon}}{\jap{(\xi_3^2,\dots,\xi_3^d)}^{2s}}\mathbbm{1}_{||\xi|^3P-\alpha|<M}dPd\xi_1^2\dots d\xi_1^d\right)^{1^-}\\&\lesssim \sup_{\xi,\xi_2} \left(\int_{|(\xi_3^2,\dots,\xi_3^d)|\ll |\xi|} \frac{1}{\jap{(\xi_3^2,\dots,\xi_3^d)}^{2s}|\xi|^{1^--\epsilon}}d\xi_1^2\dots d\xi_1^d\right)^{1^-}M^{1^-}\\&\lesssim M^{1^-}.
\end{align*}

\end{proof}


\begin{nb}
In two dimensions, one cannot guarantee the semi-nondegeneracy $\mbox{rank}(D^2P)\ge 2$, because the diagonal matrix $D$ is just a $1\times 1$ matrix. This is to be expected: indeed, if the above argument worked for $d=2$, we would be able to reach the scaling regularity $s=0$, contradicting the ill-posedness result of \cite{kinomzk}.
\end{nb}

\subsection{Estimates for the nonlinear Schrödinger equation}

\begin{prop}\label{prop:estNLScubic}
	Fix $d=2$.  Take $b=\frac{1}{2}^+$ and $b'=(b-1)^+$. For $s>0$ and $\epsilon<\min\{2s,1\}$,
	$$
	\|u_1\overline{u_2}u_3\|_{X_{S}^{s+\epsilon,b'}}\lesssim \prod_{j=1}^3\|u_j\|_{X_S^{s,b}}.
	$$ 
\end{prop}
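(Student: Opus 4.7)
The plan is to apply Lemma~\ref{lem:interpol} with the index set $A=\{0,2\}$, which reduces the trilinear estimate to the pair
\[\sup_{\xi,\xi_2,\alpha}\int_{\Gamma_\xi}\mathcal{M}_1\fia\,d\xi_1+\sup_{\xi_1,\xi_3,\alpha}\int_{\Gamma_\xi}\mathcal{M}_2\fia\,d\xi_2\lesssim M^{1^-},\]
with $\sqrt{\mathcal{M}_1\mathcal{M}_2}=\jap\xi^{s+\epsilon}/(\jap{\xi_1}^s\jap{\xi_2}^s\jap{\xi_3}^s)$ and phase function $\Phi=|\xi|^2-|\xi_1|^2+|\xi_2|^2-|\xi_3|^2$ on the hyperplane $\xi=\xi_1-\xi_2+\xi_3$. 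The whole computation rests on the classical Schr\"odinger resonance identity
\[\Phi=2(\xi-\xi_1)\cdot(\xi-\xi_3)=\frac{|\xi-\xi_2|^2}{2}-2\left|\xi_1-\frac{\xi+\xi_2}{2}\right|^{2},\]
which shows that, with $\xi$ and $\xi_2$ held fixed, the sublevel set $\{|\Phi-\alpha|<M\}$ is a 2D annulus (or small disk) in $\zeta:=\xi_1-(\xi+\xi_2)/2$ of Lebesgue measure $\lesssim M$. Note also that $\xi_3=(\xi+\xi_2)/2-\zeta$ is the reflection of $\xi_1$ across the center of the annulus. Because $\Phi$ is already quadratic in the free variable, no Morse reduction is needed here, in contrast to the mZK and higher dimensional proofs.

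The next step is a case analysis on the dyadic sizes of $|\xi|,|\xi_1|,|\xi_2|,|\xi_3|$, using the $u_1\leftrightarrow u_3$ symmetry to assume $|\xi_1|\geq|\xi_3|$. When $|\xi|\lesssim 1$, the prefactor $\jap\xi^{s+\epsilon}$ is trivial and the polar integration of $|\zeta|^{-2s}$ over the annulus (the 2D analogue of Lemma~\ref{lem:quadraticas}) yields a bound of order $M^{1-s}$, which is absorbed into $M^{1^-}$ precisely because $s>0$. When $|\xi|\gg 1$, a pigeonhole argument on $\xi=\xi_1-\xi_2+\xi_3$ forces at least one of $|\xi_1|,|\xi_2|,|\xi_3|$ to match $|\xi|$, so the cancellation $\jap\xi^{s+\epsilon}/\jap{\xi_j}^{s}\sim|\xi|^\epsilon$ combined with the $M$-measure of the annulus and the remaining $\jap{\xi_k}^{-s}$ factors produces $M^{1^-}$. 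The second supremum is handled by the $(\xi,\xi_2)\leftrightarrow(\xi_1,\xi_3)$ symmetry of $\Phi$; the freedom in Lemma~\ref{lem:interpol} to choose $\mathcal{M}_1\neq\mathcal{M}_2$ is used when the weights on the two sides behave asymmetrically.

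The main obstacle, and the case that pins down the threshold $\epsilon<2s$, is the \emph{high-high to low-low} configuration $|\xi_1|\sim|\xi_3|\gg|\xi|,|\xi_2|$. In this regime $\Phi\approx-2|\xi_1|^2$, so $|\Phi-\alpha|<M$ forces $|\xi_1|^2$ into a window of length $M$, and the annulus is concentrated far from the origin at radius $\sqrt{\rho}\sim|\xi_1|$. The weight there behaves like $\jap\xi^{s+\epsilon}/|\xi_1|^{2s}$, so the $M$-measure bound produces $M\jap\xi^{s+\epsilon}/\rho^{s}$; since $|\xi|\lesssim|\xi_1|\sim\sqrt{\rho}$, one gets $M\rho^{(\epsilon-s)/2}$ (or better), and a dyadic interpolation between the macroscopic regime $\rho\gtrsim M$ and the small-disk regime $\rho\lesssim M$ yields the tight threshold $\epsilon<2s$. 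The ceiling $\epsilon<1$ simply reflects the $M$-scaling of the 2D annulus width. Assembling all subcases yields $M^{1^-}$ in each configuration and closes the estimate via Lemma~\ref{lem:interpol}.
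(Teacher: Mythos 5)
Your overall skeleton (reduction via Lemma \ref{lem:interpol}, the parabolic identity $\Phi=2(\xi-\xi_1)\cdot(\xi-\xi_3)=\tfrac{|\xi-\xi_2|^2}{2}-2|\xi_1-\tfrac{\xi+\xi_2}{2}|^2$, annulus sublevel sets, and a case analysis on frequency sizes) is the right one, but there are two genuine gaps. First, you fix $A=\{0,2\}$ once and for all, whereas the choice of $A$ must depend on the frequency configuration. In the regime $|\xi|\sim|\xi_2|\gg|\xi_1|\ge|\xi_3|$ (i.e.\ $\xi\simeq-\xi_2$), the sublevel set in the $d\xi_1$ integral degenerates: for $\alpha$ close to $\tfrac{|\xi-\xi_2|^2}{2}$ the set $\{|\Phi-\alpha|<M\}$ contains a full unit ball around the bad point, so there is no oscillatory gain at all, while the factor $\jap{\xi}^{s+\epsilon}$ has nothing to cancel against ($\xi_1,\xi_3$ are small, and $\jap{\xi_2}^{-2s}$ cannot be moved into $\mathcal{M}_1$ without destroying the $\mathcal{M}_2$ estimate, where the annulus in $\xi_2$ has radius $\sim|\xi|$ and measure $\sim M$ but the weight does not decay along it). No asymmetric splitting $\mathcal{M}_1\neq\mathcal{M}_2$ rescues this; the paper's proof avoids it by treating $|\xi_2|\gg|\xi_1|$ as a separate case and applying Lemma \ref{lem:interpol} there with $A=\{0,1\}$ instead.

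Second, your central step for $|\xi|\gg1$ — ``cancellation $\jap{\xi}^{s+\epsilon}/\jap{\xi_j}^s\sim|\xi|^\epsilon$ combined with the $M$-measure of the annulus produces $M^{1^-}$'' — is not justified as stated: the bare measure bound $|\{|\Phi-\alpha|<M\}|\lesssim M$ leaves an uncompensated $|\xi|^\epsilon$, e.g.\ in the unbalanced configuration $|\xi|\sim|\xi_1|\gg|\xi_2|,|\xi_3|$. One must gain a negative power of $|\xi|$, either by the paper's route (nonstationarity $|\partial_{x_1}\Phi^x|\gtrsim|\xi|$ or $|\partial_{y_1}\Phi^y|\gtrsim|\xi|$, Hölder, the one-dimensional change of variables $x_1\mapsto\Phi^x$ with Jacobian $\sim|\xi|^{-1}$, and then the convergent integral $\int\jap{y-y_1-y_2}^{-(2s+1^--\epsilon)}dy_1$ — this is precisely where $\epsilon<2s$ enters) or by a genuinely weighted annulus estimate (width $M/\sqrt{\rho}$ times the arclength measure of the set where $\jap{\xi_3}\sim 2^k$), neither of which appears in your write-up. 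Relatedly, the one case you do work out in detail, $|\xi_1|\sim|\xi_3|\gg|\xi|,|\xi_2|$, is among the easiest for the weights (indeed $\mathcal{M}\lesssim\jap{\xi_1}^{\epsilon-2s}$ there, and the paper removes it altogether by assuming WLOG that $|\xi|$ is the largest frequency); the threshold $\epsilon<2s$ is actually pinned down by the resonant comparable-frequency case $\xi_1\simeq\xi_3$ (paper's Subcase A2, where the weight bound $\jap{\xi}^{\epsilon-2s}\lesssim1$ is used before the polar-coordinate/annulus argument) and by the $y$-integral above, not by the high-high to low-low interaction.
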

\begin{proof}
	Without loss of generality, we may assume that $|\xi|$ is the largest frequency and that $|\xi_1|\ge|\xi_3|$. The proof is imediate if $|\xi|<1$, we henceforth consider $|\xi|>1$. In this estimate, the phase function is $\Phi=\Phi^x + \Phi^y$, where
	$$
	\Phi^x=x^2-x_1^2+x_2^2-x_3^2, \qquad \Phi^y=y^2-y_1^2+y_2^2-y_3^2.
	$$
	
	\noindent \textbf{Case A.} $|\xi_1|\gtrsim |\xi_2|$. We apply Lemma \ref{lem:interpol} with $A=\{0,2\}$. We start with the estimate for
	$$
	\sup_{\xi,\xi_2}\int \frac{\jap{\xi}^{s+\epsilon}}{\jap{\xi_1}^s\jap{\xi_3}^{2s}}\fia dx_1dy_1.
	$$
	
	\noindent \textbf{Subcase A1.} $|\xi_3-\xi_1|\gtrsim |\xi|$. This means that
	$$
	\left| \frac{\partial \Phi^x}{\partial x_1} \right|\gtrsim |\xi|\qquad \mbox{or}\qquad  \left| \frac{\partial \Phi^y}{\partial y_1} \right| \gtrsim |\xi|.
	$$
	Assuming the first possibility, we apply Hölder and perform the change of variables $x_1\mapsto \Phi^x$:
	\begin{align*}
		\sup_{\xi,\xi_2}\int \frac{\jap{\xi}^{s+\epsilon}}{\jap{\xi_1}^s\jap{\xi_3}^{2s}}\fia dx_1dy_1 &\lesssim \sup_{\xi,\xi_2}\left(\int \frac{\jap{\xi}^{\epsilon^+}}{\jap{\xi_3}^{2s}}\fia dx_1dy_1\right)^{1^-}\\&\lesssim \sup_{\xi,\xi_2}\left(\int \frac{1}{\jap{y_3}^{2s}|\xi|^{1^--\epsilon}}\fia d\Phi^xdy_1\right)^{1^-}\\&\lesssim \sup_{\xi,\xi_2}\left(\int \frac{1}{\jap{y-y_1-y_2}^{2s+1^--\epsilon}}dy_1\right)^{1^-}M^{1^-}\lesssim M^{1^-}.
	\end{align*}
	
	\noindent \textbf{Subcase A2.} $|\xi_3-\xi_1|\ll |\xi|\sim |\xi_1|$, which implies that $\xi_1\simeq \xi_3$. Since
	$$
	\Phi^x = C(x,x_2) -2\left(x_1-\frac{x-x_2}{2}\right)^2,
	$$
	we can use polar coordinates
	$$
	x_1=\frac{x-x_2}{2} + r\cos\theta,\qquad y_1=\frac{y-y_2}{2} + r\sin\theta
	$$
	and thus, using Hölder and $|\xi_3|\gtrsim |\xi|$,
	\begin{align*}
		\sup_{\xi,\xi_2}\int \frac{\jap{\xi}^{s+\epsilon}}{\jap{\xi_1}^s\jap{\xi_3}^{2s}}\fia dx_1dy_1 &\lesssim \sup_{\xi,\xi_2}\left(\int \fia dx_1dy_1\right)^{1^-}\\&\lesssim \sup_{\xi,\xi_2}\left(\int r \mathbbm{1}_{|r^2-\tilde{\alpha}(\xi,\xi_2)|<M} dr\right)^{1^-}\lesssim M^{1^-}.
	\end{align*}
	For the other side of the interpolation, we consider 
	$$
	\sup_{\xi_1,\xi_3}\int \frac{\jap{\xi}^{s+\epsilon}}{\jap{\xi_1}^s\jap{\xi_2}^{2s}}\fia dx_2dy_2.
	$$
	\textbf{Subcase A1'.} $|\xi+\xi_2|\gtrsim |\xi|$, that is,
	$$
	\left| \frac{\partial \Phi^x}{\partial x_2} \right|\gtrsim |\xi|\qquad \mbox{or}\qquad  \left| \frac{\partial \Phi^y}{\partial y_2} \right| \gtrsim |\xi|.
	$$
	We proceed exactly as in Subcase A1:
	\begin{align*}
		\sup_{\xi_1,\xi_3}\int \frac{\jap{\xi}^{s+\epsilon}}{\jap{\xi_1}^s\jap{\xi_2}^{2s}}\fia dx_2dy_2 &\lesssim 	\sup_{\xi_1,\xi_3}\left(\int \frac{\jap{\xi}^{\epsilon^+}}{\jap{\xi_2}^{2s}}\fia dx_2dy_2\right)^{1^-}\\&\lesssim 	\sup_{\xi_1,\xi_3}\left(\int \frac{1}{\jap{y_2}^{2s}|\xi|^{1^--\epsilon}}\fia d\Phi^xdy_2\right)^{1^-}\lesssim M^{1^-}.
	\end{align*}
	\textbf{Subcase A2'.} $|\xi+\xi_2|\ll |\xi|$, which means that $\xi\simeq-\xi_2$. Writing
	$$
	\Phi^x=C(x_1,x_3)+2\left(x_2+\frac{x_1+x_3}{2}\right)^2
	$$
	the computations follow as in Subcase A2.
	
	\noindent\textbf{Case B.} $|\xi_2|\gg |\xi_1|$. We apply Lemma \ref{lem:interpol} with $A=\{0,1\}$, interpolating the estimates for
	$$
	\sup_{\xi,\xi_1} \int \frac{\jap{\xi}^{s+\epsilon}}{\jap{\xi_2}^s\jap{\xi_3}^{2s}}dx_2dy_2
	$$
	and
	$$
	\sup_{\xi_2,\xi_3} \int \frac{\jap{\xi}^{s+\epsilon}}{\jap{\xi_2}^s\jap{\xi_1}^{2s}}dx_1dy_1.
	$$
	We consider only the first integral. Since $|\xi|\sim |\xi_2|\gg |\xi_3|$,
	$$
	\left| \frac{\partial \Phi^x}{\partial x_2} \right|\gtrsim |\xi|\qquad \mbox{or}\qquad  \left| \frac{\partial \Phi^y}{\partial y_2} \right| \gtrsim |\xi|,
	$$
	and the proof follows exactly the same steps as in Subcase A1.
\end{proof}

\begin{prop}
	Fix $d=2$.  Take $b=\frac{1}{2}^+$ and $b'=(b-1)^+$. For $s>\frac{1}{2}$ and $\epsilon<\min\left\{4s-2,1\right\}$,
	$$
	\|u_1\overline{u_2}u_3\overline{u_4}u_5\|_{X_{S}^{s+\epsilon,b'}}\lesssim \prod_{j=1}^5\|u_j\|_{X_S^{s,b}}.
	$$ 
\end{prop}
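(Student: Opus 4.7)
The proof follows the template of Proposition \ref{prop:estNLScubic} but adapted to a quintilinear form. Writing $\Phi = \Phi^x + \Phi^y$ with
\begin{equation*}
\Phi^x = x^2 - x_1^2 + x_2^2 - x_3^2 + x_4^2 - x_5^2, \qquad \Phi^y = y^2 - y_1^2 + y_2^2 - y_3^2 + y_4^2 - y_5^2,
\end{equation*}
and the convolution constraint $\xi = \xi_1 - \xi_2 + \xi_3 - \xi_4 + \xi_5$, the regime $|\xi| \lesssim 1$ is handled directly by Lemma \ref{lem:CS}. For $|\xi| > 1$, by the symmetry of the nonlinearity we may assume that $|\xi_1| \gtrsim |\xi|$ is the largest input frequency.

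The plan is to apply Lemma \ref{lem:interpol} with the symmetric partition $A = \{0, 2, 4\}$, $A^c = \{1, 3, 5\}$, which groups the indices of frequencies carrying the same sign in $\Phi$. Taking $\mathcal{M}_1 = \mathcal{M}_2 = \jap{\xi}^{s+\epsilon}/\prod_{j=1}^5 \jap{\xi_j}^s$, the two interpolation estimates both reduce, via $\xi_5 = \xi + \xi_2 + \xi_4 - \xi_1 - \xi_3$, to
\begin{equation*}
\sup_{\xi, \xi_2, \xi_4, \alpha} \int \frac{\jap{\xi}^{s+\epsilon}}{\prod_{j=1}^5 \jap{\xi_j}^s} \mathbbm{1}_{|\Phi - \alpha| < M} \, d\xi_1 d\xi_3 \lesssim M^{1^-},
\end{equation*}
and its mirror image obtained by integrating in $\xi_2, \xi_4$ instead. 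A direct computation shows that $\Phi$ regarded as a function of $(x_1, y_1, x_3, y_3)$ (with the other frequencies fixed) is a nondegenerate negative-definite quadratic form, with a block-diagonal $4\times 4$ Hessian whose eigenvalues are $-6, -6, -2, -2$.

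Case analysis then mirrors Proposition \ref{prop:estNLScubic}. In the non-stationary regime, when some scalar component of $\nabla_{\xi_1, \xi_3} \Phi$ has modulus $\gtrsim |\xi|$, I would trade one integration variable for $\Phi^x$ (or $\Phi^y$) via change of variables, apply Hölder with a harmless loss, and control the remaining integrals using the weights $\jap{\xi_j}^{-s}$. In the stationary regime, the smallness of the gradient forces the critical-point relation $\xi_1 \simeq \xi_3 \simeq \xi_5 \simeq (\xi + \xi_2 + \xi_4)/3$, so that all five input frequencies become comparable to $|\xi|$. Rescaling by $p_j = \xi_j/|\xi|$ and using Morse's splitting lemma to diagonalize the quadratic form in normalized coordinates, two applications of the two-dimensional sublevel estimate in Lemma \ref{lem:quadraticas} (fixing two coordinates and integrating the other two, then integrating the first pair over a ball of radius $\sim 1$) yield
\begin{equation*}
\int \mathbbm{1}_{|\Phi - \alpha| < M} \, d\xi_1 d\xi_3 \lesssim M^{1^-} |\xi|^{2^+}.
\end{equation*}
Combined with the worst-case multiplier bound $\lesssim |\xi|^{\epsilon - 4s}$, this gives a total contribution $\lesssim M^{1^-} |\xi|^{\epsilon - 4s + 2^+}$, which is uniform in $|\xi|$ exactly under the hypothesis $\epsilon < 4s - 2$.

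\textbf{Main obstacle.} The stationary regime is the delicate step. The four-dimensional sublevel integral for a nondegenerate quadratic form inevitably produces the factor $|\xi|^{2^+}$, and it is this loss that dictates the threshold $\epsilon < 4s - 2$. Morse's lemma and the interplay between $\Phi^x$ and $\Phi^y$ must be performed carefully, so that Lemma \ref{lem:quadraticas} applies without incurring additional powers of $|\xi|$; one must also check that the intermediate configurations, where only some $|\xi_j|$ are comparable to $|\xi|$, produce strictly better estimates than the fully resonant case.
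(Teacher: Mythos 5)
Your overall skeleton (reduction via Lemma \ref{lem:interpol} with $A=\{0,2,4\}$, a nonstationary/stationary dichotomy, completing the square or Morse's lemma, Lemma \ref{lem:quadraticas}) coincides with the paper's, but the specific reduction you propose is not viable: with the symmetric choice $\mathcal{M}_1=\mathcal{M}_2=\jap{\xi}^{s+\epsilon}/\prod_{j=1}^5\jap{\xi_j}^s$, the frequency-restricted estimate you write down is false in the claimed range. Take $\xi_2=\xi_4=0$, $\xi=(N,0)$ with $N\gg1$, and $\alpha=-2N^2$; on the region where $\xi_1,\xi_3$ lie in balls of radius $\delta N$ about $(N,0)$ (so that $\xi_5\simeq(-N,0)$ and $|\partial_{x_1}\Phi|\sim N$), the multiplier is $\sim N^{\epsilon-2s}$ rather than $N^{\epsilon-4s}$, because the fixed frequencies $\xi_2,\xi_4$ contribute no decay, while the set $\{|\Phi-\alpha|<M\}$ has measure $\sim MN^{2}$ there (a slab of thickness $M/N$ inside a ball of radius $\delta N$ in four scalar variables). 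Hence the left-hand side of your reduced estimate is $\gtrsim MN^{2+\epsilon-2s}$, unbounded in $N$ whenever $\epsilon>2s-2$, in particular for every $s\in(1/2,1]$ and throughout the lower part of the stated range. The same configuration shows that the step ``control the remaining integrals using the weights $\jap{\xi_j}^{-s}$'' cannot close: after trading one scalar variable for $\Phi$ you gain $M/|\xi|$, but the three remaining scalar integrations only see single powers $\jap{\xi_3}^{-s}\jap{\xi_5}^{-s}$, which is insufficient decay. Likewise, your assertion that stationarity of $\nabla_{(\xi_1,\xi_3)}\Phi$ makes \emph{all five} inputs comparable to $|\xi|$ is unjustified: it pins $\xi_1\simeq\xi_3\simeq\xi_5$ only, and $\xi_2,\xi_4$ are free parameters of the supremum, so the worst-case multiplier in that regime is again $|\xi|^{\epsilon-2s}$ and your bound $M|\xi|^{2+\epsilon-4s}$ does not cover it.

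The paper's proof hinges exactly on the freedom in Lemma \ref{lem:interpol} that you discard: the two multipliers are chosen asymmetrically, $\mathcal{M}_1=\jap{\xi}^{s+\epsilon}\jap{\xi_1}^{-s}\jap{\xi_3}^{-2s}\jap{\xi_5}^{-2s}$ and $\mathcal{M}_2=\jap{\xi}^{s+\epsilon}\jap{\xi_1}^{-s}\jap{\xi_2}^{-2s}\jap{\xi_4}^{-2s}$ (whose geometric mean is your $\mathcal{M}$), so that each frequency being integrated carries the doubled weight $2s>1$; this weight is then split anisotropically between the $x$ and $y$ components ($\jap{x_j}^{-1/2^+}\jap{y_j}^{-(2s-1/2^+)}$) to integrate the three remaining scalar variables after the single change of variables, producing precisely the admissible loss $|\xi|^{\epsilon+2-4s}$. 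In addition, the input frequencies are ordered decreasingly and the dichotomy is taken on $\partial_{\xi_1}\Phi$ alone, i.e.\ on the size of $|\xi_1-\xi_5|$: in the stationary case the largest input is comparable to the smallest, which is what legitimately forces all five inputs to be of size $|\xi|$ and the multiplier to be $|\xi|^{\epsilon-4s}$, after which the paper integrates $\xi_1$ exactly (an annulus estimate giving a full power of $M$) and uses the integrability of $\jap{\xi_3}^{\epsilon-4s}$, i.e.\ $4s-\epsilon>2$. To repair your argument you need both the asymmetric splitting of the weights and this ordering-based case analysis (or an equivalent device); with the symmetric multipliers no rearrangement of the estimates can succeed, since the target inequality itself fails.
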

\begin{proof}
	We write the phase function $\Phi$ as $\Phi=\Phi^x+\Phi^y$, where
	$$
	\Phi^x=x^2-x_1^2+x_2^2-x_3^2+x_4^2-x_5^2.
	$$
	Without loss of generality, $|\xi|>1$ is the largest frequency and the remaining frequencies are ordered in decreasing size\footnote{In this case, the $\xi_1$ and $\xi_5$ terms in $\Phi$ have the same sign. A careful inspection of the proof reveals that the computations are completely independent on the signs. Thus there is no loss of generality in our arguments.}. We apply Lemma \ref{lem:interpol} with $A=\{0,2,4\}$, interpolating between 
	$$
	I_1=\sup_{\xi,\xi_2,\xi_4} \int \frac{\jap{\xi}^{s+\epsilon}}{\jap{\xi_1}^s\jap{\xi_3}^{2s}\jap{\xi_5}^{2s}}\fia d\xi_1d\xi_3
	$$
	and
	$$
	I_2=\sup_{\xi_1,\xi_3,\xi_5} \int \frac{\jap{\xi}^{s+\epsilon}}{\jap{\xi_1}^s\jap{\xi_2}^{2s}\jap{\xi_4}^{2s}}\fia d\xi d\xi_2.
	$$
	As the analysis is basically the same for both integrals, we focus on $I_1$.
	%
	
	\noindent \textbf{Case A.} Either
	$$
	\left| \frac{\partial \Phi^x}{\partial x_1} \right|\gtrsim |\xi|\qquad \mbox{or}\qquad  \left| \frac{\partial \Phi^y}{\partial y_1} \right| \gtrsim |\xi|.
	$$
	Assuming the first possibility,
	\begin{align*}
		I&\lesssim \sup_{\xi,\xi_2,\xi_4} \left(\int \frac{\jap{\xi}^{\epsilon^+}}{\jap{\xi_3}^{2s}\jap{\xi_5}^{2s}}\fia d\xi_1d\xi_3\right)^{1^-}\\&\lesssim \sup_{\xi,\xi_2,\xi_4} \left(\int_{|y_1|,|y_3|\lesssim |\xi|} \frac{|\xi|^{\epsilon^+-1}}{\jap{x_3}^{1/2^+}\jap{x_5}^{1/2^+}\jap{y_3}^{2s-1/2^+}\jap{y_5}^{2s-1/2^+}}\fia d\Phi^xdy_5dx_3dy_3\right)^{1^-}\\&\lesssim \sup_{\xi,\xi_2,\xi_4} \left(\int_{|y_1|,|y_3|\lesssim |\xi|} \frac{|\xi|^{\epsilon^+-1}}{\jap{y_3}^{2s-1/2^+}\jap{y_5}^{2s-1/2^+}}dy_3dy_5\right)^{1^-}\lesssim M^{1^-}.
	\end{align*}
	
	\noindent \textbf{Case B.} One has
	$$
	\left| \frac{\partial \Phi^x}{\partial x_1} \right| + \left| \frac{\partial \Phi^y}{\partial y_1} \right| \ll |\xi|,\mbox{ that is, } |\xi_1-\xi_5|\ll |\xi|\sim |\xi_1|.
	$$
	This implies that $\xi_1\simeq \xi_5$ and thus all frequencies are comparable. Writing
	$$
	\Phi^x=C(x,x_2,x_3,x_4)-2\left(x_1-\frac{x-x_2-x_3-x_4}{2}\right)^2,
	$$
	the change of variables
	$$
	x_1=\frac{x-x_2-x_3-x_4}{2} + r\cos\theta, \quad y_1=\frac{y-y_2-y_3-y_4}{2} + r\sin\theta
	$$
	leads to
	\begin{align*}
		I\lesssim \sup_{\xi,\xi_2,\xi_4}\int |\xi|^{\epsilon-4s}\mathbbm{1}_{|\Phi^x+\Phi^y-\alpha|<M}d\xi_1 d\xi_3 \lesssim \int |\xi_3|^{\epsilon-4s}\mathbbm{1}_{|r^2-\tilde{\alpha}(\x,\x_2,\xi_3,\xi_4)|<M}rdr d\xi_3 \lesssim M^{1^-}.
	\end{align*}
\end{proof}

For dimensions $d\ge 3$, it suffices to use the two-dimensional estimates together with Lemma \ref{lem:descent}:
\begin{prop}
	Fix $d\ge 3$, $b=\frac{1}{2}^+$ and $b'=(b-1)^+$.
	\begin{enumerate}
		\item For $s>d/2-1$ and $\epsilon<\min\{2s-d+2,1\}$, 
		$$
			\|u_1\overline{u_2}u_3\|_{X_{S}^{s+\epsilon,b'}}\lesssim \prod_{j=1}^3\|u_j\|_{X_S^{s,b}}.
		$$
		\item For $s>(d-1)/2$ and $\epsilon<\min\{4s+2-2d,1\}$,
			$$
		\|u_1\overline{u_2}u_3\overline{u_4}u_5\|_{X_{S}^{s+\epsilon,b'}}\lesssim \prod_{j=1}^5\|u_j\|_{X_S^{s,b}}.
		$$ 
	\end{enumerate}
\end{prop}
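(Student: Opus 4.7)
The strategy is to invoke Lemma \ref{lem:descent} with $d_1 = 2$ and $d_2 = d-2$. Split $\R^d = \R^2 \times \R^{d-2}$ and write $\xi = (\eta,\zeta)$; the Schrödinger phase factors as $\phi(\xi) = |\eta|^2 + |\zeta|^2$, so that $\Phi = \Phi^\eta + \Phi^\zeta$ with the convolution constraint likewise separating. Setting $s_2 := s - (d-2)/2$, the hypotheses $s > d/2 - 1$ (cubic) and $s > (d-1)/2$ (quintic) translate precisely into $s_2 > 0$ and $s_2 > 1/2$, i.e. the 2D thresholds of Proposition \ref{prop:estNLScubic} and its quintic counterpart; moreover $\min\{2s - d + 2, 1\} = \min\{2s_2, 1\}$ and $\min\{4s + 2 - 2d, 1\} = \min\{4s_2 - 2, 1\}$, so the admissible range of $\epsilon$ coincides with the 2D one.

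The plan is to lift the 2D pair $(\mathcal{M}_1^{2D}, \mathcal{M}_2^{2D})$, produced in the proofs of Proposition \ref{prop:estNLScubic} and its quintic analog via Lemma \ref{lem:interpol}, to a $d$-dimensional pair $(\mathcal{K}_1, \mathcal{K}_2)$ verifying the hypotheses of Lemma \ref{lem:descent}. For the cubic problem with $A = \{0,2\}$ I would take
$$
\mathcal{K}_1 = \frac{\langle \xi \rangle^{s+\epsilon}}{\langle \xi_1 \rangle^s \langle \xi_3 \rangle^{2s}}, \qquad \mathcal{K}_2 = \frac{\langle \xi \rangle^{s+\epsilon}}{\langle \xi_1 \rangle^s \langle \xi_2 \rangle^{2s}},
$$
so that $(\mathcal{K}_1 \mathcal{K}_2)^{1/2} = \mathcal{M}$; the quintic case is analogous with $A = \{0,2,4\}$ and two doubled factors in each $\mathcal{K}_j$, mirroring the 2D choices of $\mathcal{M}_1, \mathcal{M}_2$ in the corresponding propositions.

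The core step is the $\zeta$-decay domination. Fix small $\delta > 0$ and use $\langle \xi_j \rangle \geq \max\{\langle \eta_j \rangle, \langle \zeta_j \rangle\}$ to split the doubled factor as
$$
\langle \xi_3 \rangle^{-2s} = \langle \xi_3 \rangle^{-2(s_2 - \delta)}\, \langle \xi_3 \rangle^{-(d-2) - 2\delta} \leq \langle \eta_3 \rangle^{-2(s_2 - \delta)}\, \langle \zeta_3 \rangle^{-(d-2)^+},
$$
with the convention $(d-2)^+ = (d-2) + 2\delta$; since $\epsilon$ is strictly below the 2D threshold, $\delta$ can be chosen small enough that $\epsilon < 2(s_2 - \delta)$ remains admissible for the 2D estimate at regularity $s_2 - \delta$. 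This concentrates the required decay on the index $l = 1$ of the ``$\max_{l \notin A}$'' in Lemma \ref{lem:descent}, while the $\eta$-part matches $\mathcal{M}_1^{2D}$. The remaining factor $\langle \xi \rangle^{s+\epsilon} \langle \xi_1 \rangle^{-s}$ is absorbed in the natural sub-case $|\eta| \gtrsim |\zeta|$ via $\langle \xi \rangle \sim \langle \eta \rangle$ and $\langle \xi_1 \rangle \gtrsim \langle \eta \rangle$, exactly as in the 2D argument.

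The main obstacle will be the complementary sub-case $|\zeta| \gtrsim |\eta|$, where $\langle \xi \rangle \sim \langle \zeta \rangle$ cannot be bounded by $\mathcal{M}_1^{2D}(\eta)$ alone. Since $\zeta = \pm \zeta_1 \pm \cdots \pm \zeta_k$ on $\Gamma_\xi$, there must exist $j_\star$ with $|\zeta_{j_\star}| \gtrsim |\zeta|$; selecting $l = j_\star$ in the ``$\max$'' of Lemma \ref{lem:descent} drops the $\langle \zeta_{j_\star} \rangle^{-(d-2)^+}$ constraint on that index, freeing its $\langle \xi_{j_\star} \rangle^{-s}$ factor to absorb the excess $\langle \zeta \rangle^{s+\epsilon}$ growth (possibly after exchanging the roles of $A$ and $A^c$ depending on whether $j_\star \in A$). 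Executing this bookkeeping through the sub-case structure inherited from the 2D proofs and invoking Lemma \ref{lem:descent} then yields both estimates.
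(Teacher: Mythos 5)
Your reduction (splitting $\R^d=\R^2\times\R^{d-2}$, setting $s_2=s-(d-2)/2$, choosing $\mathcal{K}_1=\jap{\xi}^{s+\epsilon}\jap{\xi_1}^{-s}\jap{\xi_3}^{-2s}$, $\mathcal{K}_2=\jap{\xi}^{s+\epsilon}\jap{\xi_1}^{-s}\jap{\xi_2}^{-2s}$ with $A=\{0,2\}$, and the splitting $\jap{\xi_3}^{-2s}\le\jap{\eta_3}^{-2(s_2-\delta)}\jap{\zeta_3}^{-(d-2)^+}$) is exactly the paper's, but the sub-case you yourself flag as the main obstacle is a genuine gap, and the mechanism you propose there does not close it. Keep your splitting fixed and consider the regime where all $\eta,\eta_1,\eta_2,\eta_3$ are bounded, $\zeta_2,\zeta_3$ are bounded, and $\zeta\simeq\zeta_1$ with $|\zeta_1|\to\infty$ (so the large frequency of $\xi$ and of the largest $\xi_j$ lives entirely in the discarded variables). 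There the full multiplier satisfies $\mathcal{M}=\jap{\xi}^{s+\epsilon}\jap{\xi_1}^{-s}\jap{\xi_2}^{-s}\jap{\xi_3}^{-s}\sim\jap{\zeta_1}^{\epsilon}$: absorbing $\jap{\xi}^{s+\epsilon}$ by the freed factor $\jap{\xi_{j_\star}}^{-s}=\jap{\xi_1}^{-s}$ is short by exactly $\jap{\zeta_1}^{\epsilon}$, and no other factor decays in this regime. Structurally, Lemma \ref{lem:descent} demands pointwise bounds $\mathcal{K}_i\lesssim\mathcal{M}_i(\eta_1,\dots,\eta_k)\cdot(\text{decaying }\zeta\text{-factors})$ in which the right-hand side depends on $\zeta$ only through factors $\le 1$; for a fixed bounded $\eta$-tuple that right-hand side stays bounded while $\mathcal{K}_1\mathcal{K}_2=\mathcal{M}^2\sim\jap{\zeta_1}^{2\epsilon}\to\infty$. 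Hence no choice of $A$, of the index $l$ in the max, or of $\mathcal{K}_i,\mathcal{M}_i$ makes the hypotheses of Lemma \ref{lem:descent} hold for a splitting fixed a priori; the quintic case fails in the same regime. Exchanging the roles of $A$ and $A^c$ does not help either, since both dominations degenerate simultaneously.

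The paper avoids this regime rather than absorbing it: because the Schrödinger symbol $\sum_i x_i^2$ is invariant under permutations of the coordinates, one decomposes frequency space into finitely many regions and, on each, chooses the two retained coordinates so that they carry frequencies comparable to $|\xi|$ and to the largest $|\xi_j|$; this is the content of the paper's ``without loss of generality, $\jap{\xi}\lesssim\jap{\eta}\lesssim\jap{\eta_1}$''. With that adapted choice your bad sub-case $|\zeta|\gtrsim|\eta|$ simply never occurs, one has $\jap{\xi}^{s+\epsilon}\jap{\xi_1}^{-s}\lesssim\jap{\eta}^{\epsilon}$, and the remainder of your argument — the identification of the thresholds with $s_2>0$, $s_2>1/2$ and $\epsilon<\min\{2s_2,1\}$, $\epsilon<\min\{4s_2-2,1\}$, the $\delta$-splitting of the doubled factor, and the appeal to the frequency-restricted estimates established in the proof of Proposition \ref{prop:estNLScubic} and its quintic analogue — coincides with the paper's proof and goes through. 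So the fix is to adapt the $\R^2\times\R^{d-2}$ splitting to the frequency region (finitely many cases), not to trade the $\max_l$ freedom in Lemma \ref{lem:descent} against the $\jap{\zeta}^{s+\epsilon}$ growth.
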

\begin{proof}
	The proof is direct, we exemplify with the first estimate.
	Write $\xi_j=(\eta_j,\zeta_j)$, $\eta_j\in\R^2$, $\zeta_j\in \R^{d-2}$, and $\Phi=\Phi^\eta+\Phi^\zeta$. Without loss of generality,  $\jap{\xi}\lesssim \jap{\eta}\lesssim \jap{\eta_1}$. Then
	$$
	\mathcal{K}_1:=\frac{\jap{\xi}^{s+\epsilon}}{\jap{\xi_1}^s\jap{\xi_3}^{2s}} \lesssim \left(\frac{\jap{\eta}^{s+\epsilon}}{\jap{\eta_1}^s\jap{\eta_3}^{2s-d^++2}}\right)\frac{1}{\jap{\zeta_3}^{d^+-2}}\lesssim \left(\frac{\jap{\eta}^{1+\epsilon}}{\jap{\eta_3}^{2s-d^++2}}\right)\frac{1}{\jap{\zeta_3}^{d^+-2}}=:\mathcal{M}_1\frac{1}{\jap{\zeta_3}^{d^+-2}}
	$$
	and
	$$
	\mathcal{K}_2:=\frac{\jap{\xi}^{s+\epsilon}}{\jap{\xi_1}^s\jap{\xi_3}^{2s}} \lesssim  \left(\frac{\jap{\eta}^{\epsilon}}{\jap{\eta_2}^{2s-d^++2}}\right)\frac{1}{\jap{\zeta_2}^{d^+-2}}=:\mathcal{M}_2\frac{1}{\jap{\zeta_2}^{d^+-2}}.
	$$
	By the proof of Proposition \ref{prop:estNLScubic}, \eqref{eq:reducedim} holds for $\mathcal{M}_1$ and $\mathcal{M}_2$. The result now follows immediately from Lemma \ref{lem:descent}.
\end{proof}
\subsection{Estimates for the quartic Korteweg-de Vries equation}
%
%

\begin{prop}\label{prop:4kdv}
	For $s>-1/6$, $b=\frac{1}{2}^+$, $b'=(b-1)^+$ and $\epsilon<\min\{3s+1/2,1\}$,
	$$
	\|\jap{\nabla}u_1u_2u_3u_4\|_{X_{KdV}^{s+\epsilon,b'}}\lesssim \prod_{j=1}^4\|u_j\|_{X_{KdV}^{s,b}}.
	$$ 
\end{prop}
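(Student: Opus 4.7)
The plan is to apply Lemma \ref{lem:interpol}, following the template of Propositions \ref{prop:2dzk}--\ref{prop:3dzk}. By the symmetry of the $u^4$ nonlinearity I would order $|\xi_1|\ge|\xi_2|\ge|\xi_3|\ge|\xi_4|$, so $|\xi|\sim|\xi_1|$. The regime $|\xi|\lesssim 1$ is immediate from Lemma \ref{lem:CS}; for $|\xi|\gg 1$, eliminating $\xi_4=\xi-\xi_1-\xi_2-\xi_3$, the phase
$$\Phi=\xi^3-\sum_{j=1}^4\xi_j^3$$
has partial derivatives $\partial_{\xi_j}\Phi=3(\xi_4-\xi_j)(\xi_4+\xi_j)$ for $j=1,2,3$, and the multiplier is $\mathcal{M}=\jap{\xi}^{1+s+\epsilon}\prod_j\jap{\xi_j}^{-s}$. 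I would take $A=\{0,2\}$ in Lemma \ref{lem:interpol}, reducing the problem to the two frequency-restricted estimates
$$I_1=\sup_{\xi,\xi_2,\alpha}\int \mathcal{M}_1\fia\,d\xi_1 d\xi_3,\qquad I_2=\sup_{\xi_1,\xi_3,\xi_4,\alpha}\int \mathcal{M}_2\fia\,d\xi_0,$$
with $(\mathcal{M}_1\mathcal{M}_2)^{1/2}=\mathcal{M}$, where the second integral is one-dimensional because the constraint $\xi_2=\xi_0-\xi_1-\xi_3-\xi_4$ leaves $\xi_0$ as the only free variable.

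The case analysis would proceed by the sizes of the lower frequencies. In the \emph{high-low regimes} where $|\xi_4|\ll|\xi|$ (with further subdivisions depending on whether $|\xi_2|$ and $|\xi_3|$ are comparable to $|\xi|$), at least one partial derivative $|\partial_{\xi_j}\Phi|$ is of order $|\xi|^2$, and a change of variables $\xi_j\mapsto\Phi$ together with Hölder against the $\jap{\xi_j}^{-s}$ weights on the remaining variables closes the estimate, as in Case B of Proposition \ref{prop:3dzk}. In the \emph{fully comparable regime} $|\xi_j|\sim|\xi|$ for all $j$, I would rescale $p_j=\xi_j/\xi$ with $\sum p_j=1$, producing $\Phi=\xi^3 P(p_1,p_2,p_3)$ with $P=1-\sum_{j=1}^3 p_j^3-(1-p_1-p_2-p_3)^3$. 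The critical set of $P$ on the hyperplane $\sum p_j=1$ consists of the \emph{coherent point} $(\tfrac14,\tfrac14,\tfrac14,\tfrac14)$ and the four \emph{resonant points} obtained by permuting $(-\tfrac12,\tfrac12,\tfrac12,\tfrac12)$, and a direct computation shows that at each of these points the $2\times 2$ Hessian in $(p_1,p_3)$ (with $p_2$ frozen at its stationary value) is non-degenerate. Away from the critical set, $|\nabla P|\gtrsim 1$ and one changes variables $\xi_1\mapsto\Phi$ as usual; near a critical point, Morse's Lemma reduces $I_1$ to $\mathbbm{1}_{|\pm q_1^2\pm q_3^2-\tilde\alpha|<M/|\xi|^3}$ controlled by \eqref{eq:polares}, yielding a sublevel of $M^{1^-}|\xi|^{-1+0^+}$ in the original variables. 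For $I_2$, the derivative $\partial_{\xi_0}\Phi=3(\xi_0-\xi_2)(\xi_0+\xi_2)$ is generically of order $|\xi|^2$, except near its 1D critical point $\xi_0=-\xi_2$, where 1D Morse combined with \eqref{eq:quadrat} gives the sublevel $M^{1/2}|\xi|^{-1/2}$.

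The main obstacle I anticipate is the sharp bookkeeping required to recover the threshold $\epsilon<3s+1/2$: in the fully comparable regime the multiplier is $\mathcal{M}\sim|\xi|^{1+\epsilon-3s}$, while the 2D sublevel of $I_1$ only tolerates $\mathcal{M}_1$ up to $|\xi|^{1-0^+}$ and the 1D sublevel of $I_2$ only tolerates $\mathcal{M}_2$ up to $|\xi|^{1/2}$ (possibly with an $M$-dependent factor, as allowed by Remark \ref{nota:M}). The geometric-mean constraint $\mathcal{M}^2=\mathcal{M}_1\mathcal{M}_2$ then forces $1+\epsilon-3s\le 3/2$, i.e.\ $\epsilon<3s+1/2$, and the closing argument will require an asymmetric, $M$-dependent splitting of $\mathcal{M}$ between the two factors to realize this balance uniformly. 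As highlighted in the remark following Conjecture 1, the $M^{1/2}$ loss coming from the 1D sublevel \eqref{eq:quadrat} — the same obstruction responsible for the $M^{1^-}$ vs.\ $M^{1/2}$ gap in Lemma \ref{lem:mkdv} — is precisely what pins the threshold at $3s+1/2$ and produces the gap with the scaling-critical regularity.
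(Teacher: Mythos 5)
Your overall frame (reduce via Lemma \ref{lem:interpol} to two frequency-restricted estimates, handle them by change of variables off the stationary set and by Morse's lemma near it) matches the paper, but the specific plan has a genuine gap: fixing the single pairing $A=\{0,2\}$ for all frequency configurations fails, because the one-dimensional side of the interpolation can degenerate completely, not just lose $M^{1/2}$. On the fiber of $I_2$ you integrate in $\xi_0$ with $\xi_1,\xi_3,\xi_4$ frozen, so $\Phi=\xi_0^3-(\xi_0-c)^3-\xi_1^3-\xi_3^3-\xi_4^3$ with $c=\xi_1+\xi_3+\xi_4$ fixed; when $c\simeq 0$ (i.e.\ $\xi\simeq\xi_2$, which sits inside your ``fully comparable'' regime: take $\xi_1=2t$, $\xi_3=\xi_4=-t$, $\xi_0\in[t,2t]$, where $\Phi\equiv-6t^3$) the phase is essentially constant in the integration variable, so $\fia$ yields no gain whatsoever and $I_2\gtrsim \mathcal{M}_2\,|\xi|$ even for $M\sim 1$. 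No splitting with $(\mathcal{M}_1\mathcal{M}_2)^{1/2}=\mathcal{M}$, $M$-dependent or not, absorbs this: forcing $\mathcal{M}_2\lesssim|\xi|^{-1}$ pushes $\mathcal{M}_1\gtrsim|\xi|^{3+2\epsilon-6s}$ onto the 2D side, which only tolerates roughly $|\xi|^{1}$. The paper's proof is organized precisely to avoid this: the pairing is chosen per regime (its Case A uses $A=\{1,2,4\}$, where the 1D integration is in $\xi$ with $\xi_3$ dependent and the stationary subcase is shown to be \emph{empty}; its Case B pairs so that the 1D integration is in $\xi_4$, where $|\partial_{\xi_4}\Phi|\gtrsim|\xi|^2$ automatically), together with an asymmetric distribution of the weights $\jap{\xi}$, $\jap{\xi_1}^{\pm1/2}\jap{\xi_2}^{\pm1/2}$, so that \emph{both} frequency-restricted estimates reach the full $M^{1^-}$ demanded by Lemma \ref{lem:interpol}, with no $M^{1/2}$ loss anywhere.

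Even leaving the degenerate fiber aside, your exponent bookkeeping does not give the stated range. Since $\mathcal{M}_1\mathcal{M}_2=\mathcal{M}^2$ (not $\mathcal{M}$), tolerances $\mathcal{M}_1\lesssim|\xi|^{1}$ and $\mathcal{M}_2\lesssim|\xi|^{1/2}$ force $\mathcal{M}\lesssim|\xi|^{3/4}$, i.e.\ $1+\epsilon-3s\le 3/4$, i.e.\ $\epsilon\le 3s-1/4$, vacuous near $s=-1/6$; moreover the rebalancing allowed by Remark \ref{nota:M} cannot trade a deficient power of $M$ for a power of $|\xi|$, because each side must separately be $O(M^{1^-})$ for its dyadic sum to converge and the bound must hold at $M\sim1$. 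In the paper the threshold $\epsilon<3s+1/2$ is \emph{not} produced by a 1D $M^{1/2}$ loss: it comes from the 2D Morse regions, where the weight $|\xi|^{5/2+\epsilon-3s}$ must be beaten by the sublevel $(M/|\xi|^3)^{1^-}$ from \eqref{eq:polares}. Two smaller slips: ordering only $|\xi_1|\ge\dots\ge|\xi_4|$ does not give $|\xi|\sim|\xi_1|$ (the output frequency can be much smaller; the paper instead reduces to $|\xi|\ge|\xi_1|\ge\dots\ge|\xi_4|$), and the relevant stationary points are those of $\Phi$ restricted to the integration fibers with the sup variables frozen, not the Lagrange critical points of $P$ on the full hyperplane $\sum p_j=1$.
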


\begin{proof}
	Once again, we consider the worst-case scenario $|\xi|\ge |\xi_1|\ge \dots \ge |\xi_4|$ and $|\xi|>1$. We take $-1/6<s<0$ (for $s>0$, the proof follows from similar computations).
	
	\textbf{Case A.} We do not have $|\xi|\simeq |\xi_1|\simeq |\xi_2|\simeq |\xi_3|\gg |\xi_4|$. We use Lemma \ref{lem:interpol} with $A=\{1,2,4\}$, interpolating between
	\begin{equation}\label{eq:4kdvinter1}
		I_1:=\sup_{\xi,\xi_3} \int \frac{|\xi|\jap{\xi}^{s+\epsilon+\frac{1}{2}}}{\jap{\xi_1}^{s+\frac{1}{2}}\jap{\xi_2}^{s+\frac{1}{2}}\jap{\xi_3}^s\jap{\xi_4}^s }\fia d\xi_1d\xi_2\lesssim M^{1^-}
	\end{equation}
	and
	\begin{equation}\label{eq:4kdvinter2}
		I_2:=\sup_{\xi_1,\xi_2,\xi_4} \int \frac{|\xi|\jap{\xi}^{s+\epsilon-\frac{1}{2}}}{\jap{\xi_1}^{s-\frac{1}{2}}\jap{\xi_2}^{s-\frac{1}{2}}\jap{\xi_3}^s\jap{\xi_4}^s}d\xi\lesssim M^{1^-}.
	\end{equation}
	We begin with \eqref{eq:4kdvinter1}, where $\xi,\xi_3$ are fixed and $\xi_4$ is the dependent variable.
	\vskip5pt	
	\noindent\textbf{Subcase A1.} $|\partial_{\xi_1}\Phi|\gtrsim |\xi|^2$. Then, since $|\xi|\sim |\xi_1|$ and $|\xi_2|\ge |\xi_3|\ge |\xi_4|$,
	\begin{align*}
		I_1\lesssim \sup_{\xi,\xi_3}\left(\int  \frac{|\xi|^{1^++\epsilon}}{\jap{\xi_2}^{3s+\frac{1}{2}^-}}\fia d\xi_1d\xi_2\right)^{1^-}\lesssim \sup_{\xi,\xi_3}\left(\int \frac{1}{\jap{\xi_2}^{3s+\frac{1}{2}^{-}}|\xi|^{1^--\epsilon}} \fia d\Phi d\xi_2\right)^{1^-} \lesssim M^{1^-}
	\end{align*}
	
	\vskip5pt	
	\noindent\textbf{Subcase A2.} $|\partial_{\xi_1}\Phi|\ll |\xi|^2$. This implies that
	$$
	|\xi_1^2-\xi_4^2|\ll |\xi_1|^2,\quad \mbox{that is, }\quad |\xi_1|\simeq |\xi_2| \simeq |\xi_3|\simeq |\xi_4|.
	$$
	Write
	$$
	\Phi=\xi^3P(\vec{p}),\quad P=1-\sum_{j=1}^4p_j^3,\quad \vec{p}=(p_1,p_2,p_3,p_4),\quad p_j=\frac{\xi_j}{\xi}.
	$$
	The condition of this subcase implies that $|\vec{p}-\vec{p}_0|\ll 1$, where $$\vec{p}_0=(p_{01},p_{02},p_{03},p_{04}), \quad p_{0j}=\pm 1,\ j=1,\dots, 4.$$ For $p_3$ fixed and $p_4=1-p_1-p_2-p_3$,
	$$
	\nabla P = 3(p_4^2-p_1^2, p_4^2-p_2^2), \quad D^2P=-6\begin{bmatrix}
		p_1+p_4       & p_4   \\
		p_4       &  p_2+p_4  
	\end{bmatrix}.
	$$
	At $\vec{p}_0$, the gradient vanishes and $\det(D^2P)\neq 0$. Therefore, by Morse's lemma with parameters \cite[Lemma C.6.1]{HormanderIII}, for every $p_3\simeq p_{03}$, there exists a unique critical point $z(p_3)=(z_1(p_3),z_2(p_3))$ and a diffeomorphism $(p_1,p_2)\mapsto(q_1,q_2)$ such that
	$$
	P(\vec{p})=P(z(p_3))\pm q_1^2 \pm q_2^2.
	$$
	Then, by \eqref{eq:polares},
	\begin{align*}
		I_1&\lesssim \sup_{\xi,\xi_3}\int |\xi|^{\frac{1}{2}+\epsilon-3s}\fia d\xi_1 d\xi_2 \lesssim  \sup_{\xi,\xi_3}\int |\xi|^{\frac{5}{2}+\epsilon-3s}\mathbbm{1}_{|\xi^3P-\alpha|<M}dp_1dp_2 
		\\&\lesssim  \sup_{\xi,\xi_3}\int |\xi|^{\frac{5}{2}+\epsilon-3s}\mathbbm{1}_{|\xi^3(P(z(p_3))\pm q_1^2 \pm q_2^2)-\alpha|<M}dq_1dq_2 \lesssim \sup_\xi |\xi|^{\frac{5}{2}+\epsilon-3s}\left(\frac{M}{\xi^3}\right)^{1^-}\lesssim M^{1^-}.
	\end{align*}
	\vskip10pt
	We now consider \eqref{eq:4kdvinter2}. Now $\xi_1,\xi_2, \xi_4$ are fixed and $\xi_3$ is the dependent variable.
	\vskip5pt	
	\noindent\textbf{Subcase A1'.} $|\partial_\xi \Phi|\gtrsim |\xi_1|^2$. Then we estimate directly
	\begin{align*}
		I_2\lesssim \sup_{\xi_1,\xi_2,\xi_4} \int |\xi_1|^{-3s+\epsilon+\frac{3}{2}}\fia d\xi \lesssim \sup_{\xi_1,\xi_2,\xi_4} \left(\int |\xi_1|^{-3s+\epsilon+\frac{3}{2}^+}\fia \frac{1}{|\xi_1|^2}d\Phi\right)^{1^-}\lesssim M^{1^-}.
	\end{align*}
	\vskip5pt	
	\noindent\textbf{Subcase A2'.} $|\partial_\xi\Phi|\ll |\xi_1|^2$, which implies that
	$$
	|\xi^2-\xi_3^2|\ll |\xi|^2,\quad \mbox{i.e.,}\quad |\xi|\simeq |\xi_1|\simeq |\xi_2|\simeq |\xi_3|.
	$$
	Since we are in Case A, $|\xi_4|\sim |\xi|$. Therefore
	$$
	\xi_4 =\xi-\sum_{j=1}^3 \xi_j \simeq k\xi,\quad k\in \{\pm2, 4\},
	$$
	which contradicts $|\xi_4|\le |\xi|$. We conclude that Subcase A2' is empty.
	\vskip10pt
	\noindent\textbf{Case B.} $|\xi|\simeq |\xi_1|\simeq |\xi_2|\simeq |\xi_3|\gg |\xi_4|$. In this case, we interpolate between
\begin{equation}\label{eq:4kdvinterp1}
	\sup_{\xi_1,\xi_2,\xi_3} \int \frac{\jap{\xi}^{s+3/2+\epsilon}}{\prod_{j=1}^4\jap{\xi_j}^{s}}\fia d\xi_4
\end{equation} 
and
\begin{equation}\label{eq:4kdvinterp2}
	\sup_{\xi,\xi_4} \int \frac{\jap{\xi}^{s+1/2+\epsilon}}{\prod_{j=1}^4\jap{\xi_j}^{s}}\fia d\xi_1 d\xi_2.
\end{equation} 
First, notice that
$$
\frac{\jap{\xi}^{s+3/2+\epsilon}}{\prod_{j=1}^4\jap{\xi_j}^{s}}\lesssim |\xi_3|^{3/2+\epsilon - 3s},\quad  \frac{\jap{\xi}^{s+1/2+\epsilon}}{\prod_{j=1}^4\jap{\xi_j}^{s}}\lesssim |\xi|^{1/2+\epsilon - 3s}.
$$
For \eqref{eq:4kdvinterp1}, since $|\partial_{\xi_4}\Phi| \gtrsim |\xi_3|^2$, 
\begin{align*}
	\sup_{\xi_1,\xi_2,\xi_3} \int |\xi_3|^{3/2+\epsilon - 3s}\fia d\xi_4&\lesssim \sup_{\xi_1,\xi_2,\xi_3} \left(\int |\xi_3|^{3/2^++\epsilon - 3s}\fia d\xi_4\right)^{1^-}\\&\lesssim  \sup_{\xi_1,\xi_2,\xi_3} \left(\int |\xi_3|^{-1/2^++\epsilon - 3s}\fia d\Phi\right)^{1^-}\lesssim M^{1^-}.
\end{align*}
For \eqref{eq:4kdvinterp2}, write
$$
\Phi=\xi^3P(p_1,p_2,p_3,p_4),\quad p_j=\frac{\xi_j}{\xi},\ j=1,\dots, 4.
$$
As $p_4$ is fixed (near 0) and $p_3=1-p_1-p_2$, we have
$$
D^2P=-6\begin{bmatrix}
	p_1+p_3 & p_3 \\ p_3 & p_2+p_3
\end{bmatrix}.
$$
Therefore, since $p_1\simeq p_2\simeq -p_3\simeq 1$, $\det(D^2P)\simeq -1$. We now consider two cases:

\vskip5pt
\noindent 1. The nonstationary case $|\nabla P|\gtrsim 1$. Suppose, without loss of generality, that $|\partial_{p_1}P|\gtrsim 1$. Then
\begin{align*}
		\sup_{\xi,\xi_4} \int \frac{\jap{\xi}^{s+1/2+\epsilon}}{\prod_{j=1}^4\jap{\xi_j}^{s}}\fia d\xi_1 d\xi_2 &\lesssim 	\sup_{\xi,\xi_4} \int |\xi|^{5/2+\epsilon-3s}\fia dp_1dp_2 \\&\lesssim 	\sup_{\xi,\xi_4} \left(\int |\xi|^{5/2^++\epsilon-3s}\mathbbm{1}_{|\xi^3P-\alpha|<M} dp_1dp_2\right)^{1-}\\&\lesssim 	\sup_{\xi,\xi_4} \left(\int_{|p_2|\simeq 1} |\xi|^{5/2^++\epsilon-3s}\mathbbm{1}_{|\xi^3P-\alpha|<M} dPdp_2\right)^{1-}\lesssim M^{1^-}.
\end{align*}

\noindent 2. The stationary case $|\nabla P|\ll 1$. Thus we are near a nondegenerate critical point $z\in \R^2$. Applying Morse's Lemma, there exists a diffeomorphism $(p_1,p_2)\mapsto (q_1,q_2)$ such that
$$
P(p_1,p_2)=P(z)+q_1^2-q_2^2
$$
and, using Lemma \ref{lem:quadraticas},
\begin{align*}
	\sup_{\xi,\xi_4} \int \frac{\jap{\xi}^{s+1/2+\epsilon}}{\prod_{j=1}^4\jap{\xi_j}^{s}}\fia d\xi_1 d\xi_2 &\lesssim 	\sup_{\xi,\xi_4} \int |\xi|^{5/2+\epsilon-3s}\fia dp_1dp_2 \\&\lesssim 	\sup_{\xi,\xi_4} \left(\int |\xi|^{5/2^++\epsilon-3s}\mathbbm{1}_{|\xi^3(P(z)+q_1^2-q_2^2)-\alpha|<M} dq_1dq_2\right)^{1-}\lesssim M^{1^-}.
\end{align*}

\end{proof}

\begin{nb}
	In the above proof, the estimate follows from applying Lemma \ref{lem:interpol} for a specific choice of $A$. Since we aim to reach the scaling-critical regularity (that is, $\beta=1$), the integration of the indicator $\fia$ must give a full power of $M$. This is possible in two cases:
	\begin{itemize}
		\item If $\Phi$ is nonstationary, then the integration in $\Phi$ gives the correct power. This requires a single integration.
		\item If $\Phi$ is stationary and nondegenerate, we replace $\Phi$ with the corresponding quadratic form given by Morse's lemma. If this is done in two or more variables (determined by the number of integrals), the integration then yields the required estimate. However, if one has a single variable, the integration of the quadratic form gives $M^{1/2}$ (as mentioned in Remark \ref{nota:espaco}).
	\end{itemize}
In conclusion, with two integrations, the estimate holds as long as $\Phi$ does not have degenerate critical points, while in the case of a single integration, one must ensure that $\Phi$ is nonstationary. This is the motivation for the choices of pairings in both \eqref{eq:4kdvinter1}-\eqref{eq:4kdvinter2} and \eqref{eq:4kdvinterp1}-\eqref{eq:4kdvinterp2} made above.
\end{nb}

%

\bibliography{biblio}
\bibliographystyle{plain}

\begin{center}
	{\scshape Simão Correia}\\
	{\footnotesize
		Center for Mathematical Analysis, Geometry and Dynamical Systems,\\
		Department of Mathematics,\\
		Instituto Superior T\'ecnico, Universidade de Lisboa\\
		Av. Rovisco Pais, 1049-001 Lisboa, Portugal\\
		simao.f.correia@tecnico.ulisboa.pt
	}
	\vskip15pt
	{\scshape Filipe Oliveira}\\
	{\footnotesize
		Mathematics Department and CEMAPRE\\
		ISEG, Universidade de Lisboa,\\
		Rua do Quelhas 6, 1200-781 Lisboa, Portugal\\
		foliveira@iseg.ulisboa.pt
	}
	\vskip15pt
	
	{\scshape Jorge Drumond Silva}\\
	{\footnotesize
		Center for Mathematical Analysis, Geometry and Dynamical Systems,\\
		Department of Mathematics,\\
		Instituto Superior T\'ecnico, Universidade de Lisboa\\
		Av. Rovisco Pais, 1049-001 Lisboa, Portugal\\
		jsilva@math.tecnico.ulisboa.pt
	}

\end{center}
\end{document}